\newcommand{\B}[1]{\mathbb{#1}}
\newcommand{\C}[1]{\mathcal{#1}}
\newcommand{\rmod}[1]{\text{\rm {\bf Mod}-}{#1}}
\newcommand{\lmod}[1]{{#1}\text{\rm -{\bf Mod}}}
\newcommand{\gldim}{{\rm gl.dim}}
\newcommand{\prdim}{{\rm pr.dim}}
\newcommand{\fldim}{{\rm fl.dim}}
\newcommand{\wgldim}{{\rm wgl.dim}}
\newcommand{\Hom}{{\rm Hom}}
\newcommand{\End}{{\rm End}}
\newcommand{\Ext}{{\rm Ext}}
\newcommand{\Tor}{{\rm Tor}}
\renewcommand{\lim}{{\rm lim}}
\renewcommand{\leq}{\leqslant}
\renewcommand{\geq}{\geqslant}
\newcommand{\xra}[1]{\xrightarrow{~#1~}}
\newtheorem{theorem}{Theorem}[subsection]
\newtheorem{proposition}[theorem]{Proposition}
\newtheorem{corollary}[theorem]{Corollary}
\newtheorem{lemma}[theorem]{Lemma}
\theoremstyle{definition}
\newtheorem{definition}[theorem]{Definition}
\newtheorem{remark}[theorem]{Remark}
\newtheorem{example}[theorem]{Example}
\author{Atabey Kaygun}
\address{Department of Mathematics and Computer Science, Bahçeşehir University, Beşiktaş Istanbul, TURKEY}
\email{atabey.kaygun@bahcesehir.edu.tr}
\author{Müge Kanuni}
\address{Department of Mathematics, Boğaziçi University, Bebek Istanbul, TURKEY}
\email{muge.kanuni@boun.edu.tr}
\title{Global Dimensions of Some Artinian Algebras}
\begin{document}
\maketitle

\section*{Introduction}

The structure of arbitrary associative commutative unital artinian
algebras is well-known: they are finite products of associative
commutative unital local algebras~\cite[pg.351,
Cor. 23.12]{Lam:AFirstCourse}.  In the semi-simple case, we have the
Artin-Wedderburn Theorem which states that any semi-simple artinian
algebra (which is assumed to be associative and unital but not
necessarily commutative) is a direct product of matrix algebras over
division rings~\cite[pg.35, Par. 3.5]{Lam:AFirstCourse}.  Along these
lines, we observe a simple classification of artinian algebras and
their representations in Proposition~\ref{ClassificationLemma} (hereby
referred as {\em the Classification Lemma}) in terms of a category in
which each object has a local artinian endomorphism algebra.  This
category is constructed using a fixed set of primitive (not
necessarily central) idempotents in the underlying algebra. The
Classification Lemma is a version of Freyd's {\em Representation
  Theorem}~\cite[Sect. 5.3]{Freyd:AbelianCategories}: from an artinian
algebra $A$ we create a category $\C{C}_A$ on finitely many objects,
and then the category of $A$-modules can be realized as a category of
functors which admit $\C{C}_A$ as their domain.  This construction can
also be thought as a higher dimensional analogue of the semi-trivial
extensions of \cite{Palmer:SemitrivialExtensions} for artinian
algebras.

We use the Classification Lemma as a starting point to develop a novel
effective algorithm to determine finiteness of global dimensions of a
large class of artinian algebras.  As a first step, in
Theorem~\ref{SourceSink} (hereby referred as {\em the Source-Sink
  Theorem}) we get explicit lower and upper bounds for the global
dimension of artinian algebras in terms of the global dimensions of a
pair of artinian subalgebras. Then in the rest of
Section~\ref{GlobalDimension} we develop our algorithm.  This
algorithm is based on a directed graph we construct out of a given
artinian algebra and it allows us to reduce the question of finiteness
of the global dimension of a given artinian algebra to finiteness of
the global dimensions of a finite subset of subalgebras. The graph
$\C{G}_A$ we define in Definition~\ref{GraphOfAnArtinianAlgebra} for
an artinian algebra $A$ is {\bf not} the Auslander-Reiten quiver of
$A$ and it appears to be a simplified version of the {\em natural
  quiver} of an artin algebra defined in
\cite{Li:ArtinAlgebrasViaPathAlgebras}.  See
Subsection~\ref{Explanation} and Figure~\ref{Algorithm} in which we
summarize the results and the algorithm we obtain in
Section~\ref{GlobalDimension}.

As an application of our algorithm, we obtain a new result in
Theorem~\ref{QuotientPathAlgebra}: we show that the quotient of a free
path algebra of a cycle-free graph by a nil ideal has finite global
dimension.  Using the same algorithm we were also able to reproduce
two results from the literature: In
Proposition~\ref{FreePathAlgebraIsHereditary} we prove that free path
algebras over cycle-free graphs are hereditary, and in
Proposition~\ref{IncidenceAlgebras} we prove that incidence algebras
have finite global dimension.  See \cite[Section
9]{Mitchell:TheoryOfCategories} and \cite[Proposition
1.4]{AuslanderReitenSmalo:RepresentationTheoryofArtinAlgebras} for the
classical proofs of these Propositions.

At this point, let us make a careful distinction between {\em an
  artinian algebra} (an algebra which satisfies a descending chain
condition on left, right or bilateral ideals) and {\em an artin algebra} (an algebra which is
finitely generated as a bimodule over a unital commutative artinian
center.)  In this article we assume that our algebras are artinian and
we make no assumption on the finiteness of the $k$-dimensions of the
algebras we work with over the base field $k$. Also, it is common to
restrict to the type of a presentation the base algebra has in the
relevant literature, such as assuming that the base algebra is a
quotient of a free path algebra of a quiver by an ideal contained in
the Jacobson radical, or in its square. See for
example~\cite{AnickGreen:QuotientOfPathAlgebras}. But we make no such
restrictive assumptions about presentations of our artinian algebras
except in Theorem~\ref{QuotientPathAlgebra} where we prove finiteness
of global dimensions of a class of artinian algebras which contains
all incidence algebras.

\subsection*{Plan of this article} 
In Section~\ref{Classification} we either cite or prove some technical
results we need in Section~\ref{GlobalDimension} in proving our main
results. Section~\ref{GlobalDimension} contains our algorithm which
calculates upper and lower bounds for global dimension of artinian
algebras, and all other results needed to develop this algorithm. See
Subsection~\ref{Explanation} and Figure~\ref{Algorithm} for an
overview.  In Section~\ref{PathAlgebras} we apply the results we
obtain in Section~\ref{GlobalDimension} to path algebras and incidence
algebras, and make explicit calculations in Subsection~\ref{Examples}.

\subsection*{Standing assumptions and notation}
Through out the article $k$ is a base field.  We do not make any
assumption about the characteristic of $k$.  All unadorned tensor
products are assumed to be over $k$.  We assume $A$ is a unital
associative algebra over $k$ which may or may not be commutative. When
we say $A$ is artinian (resp. noetherian) we mean $A$ satisfies
descending (resp. ascending) chain condition on both left and right
ideals.  In this article we heavily use idempotents (elements which
satisfy $e^2=e$), but we make no assumptions on whether these
idempotents are in the center.  We will use the notation $Ax$, $xA$
and $AxA$ to denote the left, right and two-sided principal ideals
generated by an element $x\in A$, respectively. The multiplicative
group of invertible elements in $A$ is denoted by $A^\times$.  We will
use $J(A)$ to denote the Jacobson Radical of $A$, i.e. the
intersection of all maximal left ideals in $A$.  We note that the
intersection of all maximal right ideals is also $J(A)$.  All
$A$-modules are assumed to be left $A$-modules unless otherwise
explicitly stated.  We will denote the category of left $A$-modules by
$\lmod{A}$ and the category of right $A$-modules by $\rmod{A}$. For a
finite set $U$, we will use $|U|$ to denote the cardinality of
$U$. Finally, in our graph algebras in forming products of edges we
follow the {\em categorical} convention: a product of the form $fg$ in
a path algebra corresponds to compositions of arrows of the form
$\bullet\xleftarrow{\ f\ }\bullet\xleftarrow{\ g\ }\bullet$.

\subsection*{Acknowledgments}
We would like to thank both Birge Huisgen-Zimmermann and Gene Abrams
for pointing out the mistake in the earlier version of
Proposition~\ref{UniqueIdempotents}.

\section{The classification lemma}\label{Classification}

\subsection{Technical lemmata}\label{Lemmata}~

Below we list few lemmata we need, without proofs. Our main references
are \cite{Lam:Lectures, Lam:AFirstCourse}.
\begin{lemma}\label{Unit}
  For every $x\in A$, if $x\in J(A)$ then $1+x\in A^\times$.
\end{lemma}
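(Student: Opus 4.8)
The plan is to exploit the defining description of $J(A)$ as the intersection of all maximal left ideals, and then to bootstrap left-invertibility into genuine two-sided invertibility. First I would establish the intermediate claim that every element of the form $1+j$ with $j \in J(A)$ is left-invertible. Suppose toward a contradiction that $1+x$ is not left-invertible; then the principal left ideal $A(1+x)$ is proper, hence contained in some maximal left ideal $\G{m}$. Since $x \in J(A) \subseteq \G{m}$ and $1+x \in A(1+x) \subseteq \G{m}$, subtracting gives $1 = (1+x) - x \in \G{m}$, contradicting the properness of $\G{m}$. Thus there exists $u \in A$ with $u(1+x) = 1$.

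The second step upgrades this one-sided inverse to a two-sided one. From $u(1+x) = 1$ I rewrite $u = 1 - ux$. Because $J(A)$ is a two-sided ideal, $ux \in J(A)$, so $u = 1 + (-ux)$ again lies in $1 + J(A)$. Applying the intermediate claim to $u$ produces $v \in A$ with $vu = 1$. Now $u$ admits $1+x$ as a right inverse and $v$ as a left inverse, which forces $v = 1+x$ via the standard cancellation $v = v \cdot 1 = v(u(1+x)) = (vu)(1+x) = 1+x$. Hence $(1+x)u = vu = 1$ as well, and together with $u(1+x)=1$ this shows $1+x \in A^\times$ with inverse $u$.

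The only delicate point, and the one I expect to be the main obstacle, is precisely this passage from left-invertibility to two-sided invertibility: knowing that $1+x$ merely has a left inverse is not enough. The key observation that dissolves the difficulty is that the left inverse $u$ \emph{itself} lies in $1 + J(A)$, so the intermediate claim can be reapplied to $u$ rather than to $1+x$ directly, after which the cancellation identity finishes the argument. Everything else is a routine use of the maximal-ideal characterization of the radical, and the symmetric description of $J(A)$ as the intersection of the maximal right ideals is not even needed here.
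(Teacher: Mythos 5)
Your proof is correct and complete: the passage from left-invertibility (via the maximal left ideal characterization of $J(A)$) to two-sided invertibility (by observing that the left inverse $u = 1 - ux$ again lies in $1 + J(A)$) is exactly the standard argument. The paper states this lemma without proof, deferring to Lam, and your argument is the same one found in that reference, so there is nothing to reconcile.
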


\begin{lemma}\label{NonUnitIdeal}\cite[Theorem 19.1]{Lam:AFirstCourse}
  $A$ is local if and only if $A\setminus A^\times$ is a two-sided
  ideal.
\end{lemma}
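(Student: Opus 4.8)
The plan is to prove both implications directly from the definition that $A$ is local precisely when it has a unique maximal left ideal, which then necessarily coincides with $J(A)$ and is two-sided. Writing $\mathfrak{m} = J(A)$, the whole statement reduces to establishing the set-theoretic identity $A \setminus A^\times = \mathfrak{m}$, after which being a two-sided ideal is automatic.

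For the implication $(\Leftarrow)$, suppose $I := A \setminus A^\times$ is a two-sided ideal. Since $1 \in A^\times$ we have $1 \notin I$, so $I$ is proper. Any proper left ideal $L$ can contain no unit (otherwise $L = A$), hence $L \subseteq I$; thus $I$ is the largest proper left ideal, so it is the unique maximal left ideal and $A$ is local. This direction is routine.

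The substance is in $(\Rightarrow)$. Assume $A$ is local with unique maximal left ideal $\mathfrak{m} = J(A)$. Being a proper ideal, $\mathfrak{m}$ contains no units, giving $\mathfrak{m} \subseteq A \setminus A^\times$. For the reverse inclusion I would take $x \notin \mathfrak{m}$ and show $x \in A^\times$. As $Ax$ is a left ideal not contained in the unique maximal left ideal $\mathfrak{m}$, we must have $Ax = A$, so $ux = 1$ for some $u$. The main obstacle here is the noncommutative gap between one-sided and two-sided invertibility: $ux = 1$ only makes $x$ left-invertible. I would close this gap by applying the same argument to $u$. First note $u \notin \mathfrak{m}$, since otherwise $1 = ux$ would lie in the two-sided ideal $\mathfrak{m}$, a contradiction; hence $Au = A$ and $vu = 1$ for some $v$. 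Then $v = v(ux) = (vu)x = x$, so $xu = 1$ as well, and $x \in A^\times$. This proves $A \setminus A^\times = \mathfrak{m}$, which is two-sided.

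Alternatively, one could reach left-invertibility of $x$ without invoking maximality twice by using Lemma~\ref{Unit}: from $Ax + \mathfrak{m} = A$ write $ax + m = 1$ with $m \in \mathfrak{m}$, so that $ax = 1 - m \in A^\times$ by Lemma~\ref{Unit}, whence $x$ is left-invertible; the passage to a two-sided inverse then proceeds exactly as above. Either way, the only delicate point is the one-sided-to-two-sided upgrade, and everything else is formal.
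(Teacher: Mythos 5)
Your proof is correct. The paper states this lemma without proof, deferring to \cite[Theorem 19.1]{Lam:AFirstCourse}, and your argument is essentially the standard one given there: the only delicate point, the upgrade from the left inverse $ux=1$ to a two-sided inverse via $v=v(ux)=(vu)x=x$, is exactly the step that needs care in the noncommutative setting, and you handle it correctly (using that $\mathfrak{m}=J(A)$ is two-sided to rule out $u\in\mathfrak{m}$).
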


\begin{lemma}[Nakayama Lemma]
  Let $M$ be a finitely generated left $A$-module.  If $J(A) M = M$
  then $M = 0$.
\end{lemma}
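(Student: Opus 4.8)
The plan is to argue by contradiction, assuming $M \neq 0$, and to exploit the fact that $J(A)$ is a two-sided ideal together with Lemma~\ref{Unit}. Since $M$ is finitely generated and nonzero, I would first fix a generating set of minimal cardinality, say $m_1,\dots,m_n$ with $n \geq 1$. The goal is to derive a contradiction by showing that one of these generators is in fact redundant, so that the minimal generating set was not minimal after all (or, in the base case $n=1$, that the single generator vanishes).

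The key step is to use the hypothesis $J(A)M = M$ to express the last generator as a $J(A)$-linear combination of all the generators: since $m_n \in M = J(A)M$, I can write $m_n = \sum_{i=1}^{n} a_i m_i$ with each $a_i \in J(A)$. Collecting the $m_n$ term on the left then yields $(1 - a_n)\,m_n = \sum_{i=1}^{n-1} a_i m_i$. Because $J(A)$ is a two-sided ideal we have $-a_n \in J(A)$, so Lemma~\ref{Unit} guarantees that $1 - a_n \in A^\times$. Multiplying on the left by $(1-a_n)^{-1}$ gives $m_n = (1-a_n)^{-1}\sum_{i=1}^{n-1} a_i m_i$, which shows that $m_n$ lies in the submodule generated by $m_1,\dots,m_{n-1}$.

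When $n \geq 2$ this immediately contradicts the minimality of the chosen generating set, since $m_1,\dots,m_{n-1}$ already generate $M$. When $n = 1$ the identity reads $(1-a_1)m_1 = 0$, and invertibility of $1 - a_1$ forces $m_1 = 0$, so $M = 0$, contradicting $M \neq 0$. In either case the assumption $M \neq 0$ is untenable, and we conclude $M = 0$.

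I do not expect a serious obstacle here; the only point requiring care is the non-commutative setting, where I must make sure to multiply by $(1-a_n)^{-1}$ on the correct side (the left, since $M$ is a left module) and to invoke that $J(A)$ is two-sided in order to apply Lemma~\ref{Unit} to $1 - a_n$ rather than $1 + a_n$. Beyond that, the argument is the standard minimal-generator reduction.
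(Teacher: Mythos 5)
Your argument is correct: this is the standard minimal-generating-set proof of Nakayama's lemma, and the reduction to Lemma~\ref{Unit} (taking $x=-a_n\in J(A)$ to get $1-a_n\in A^\times$) is exactly right, including the care about multiplying by the inverse on the left in the noncommutative setting. The paper states this lemma without proof (it is listed among the cited lemmata from Lam), so there is no in-paper argument to compare against; yours is the textbook proof those references give. The only step you pass over quickly is why $m_n\in J(A)M$ can be written as $\sum_{i=1}^n a_i m_i$ with each $a_i\in J(A)$: a general element of $J(A)M$ is a finite sum $\sum_j b_j x_j$ with $b_j\in J(A)$ and $x_j\in M$, and one must expand each $x_j$ over the generators and use that $J(A)$ is a \emph{right} ideal to absorb the resulting coefficients back into $J(A)$ --- routine, but it is the place where two-sidedness of $J(A)$ is genuinely used, rather than in the harmless observation that $-a_n\in J(A)$.
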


\begin{lemma}\label{IdempotentSplitting}
  Let $A$ be an algebra which splits as a direct sum of two (left)
  ideals $A = I\oplus J$.  Then there are two orthogonal idempotents
  such that $1=e+f$.
\end{lemma}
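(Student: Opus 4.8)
The plan is to exploit the uniqueness built into the direct sum decomposition $A = I \oplus J$. Since $1 \in A$, there exist unique elements $e \in I$ and $f \in J$ with $1 = e + f$; these are the obvious candidates for the orthogonal idempotents, and the whole proof reduces to checking that they behave as claimed. The only structural input I would use is that $I$ and $J$ are left ideals, so that $A I \subseteq I$ and $A J \subseteq J$.

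First I would record the general principle that for any $a \in A$, multiplying $1 = e + f$ on the left by $a$ gives $a = ae + af$ with $ae \in I$ and $af \in J$, so by uniqueness this is precisely the decomposition of $a$ in $I \oplus J$. Applying this with $a = e$ yields $e = e^2 + ef$, where $e^2 \in I$ and $ef \in J$; comparing with the trivial decomposition $e = e + 0$ forces $e^2 = e$ and $ef = 0$. Symmetrically, applying the principle with $a = f$ gives $f = fe + f^2$ with $fe \in I$ and $f^2 \in J$, and comparison with $f = 0 + f$ forces $fe = 0$ and $f^2 = f$. Together these identities say exactly that $e$ and $f$ are orthogonal idempotents with $e + f = 1$.

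There is no genuine obstacle here: the argument is a direct consequence of uniqueness of representation in a direct sum, and it needs none of the preceding lemmata. The one point I would track carefully is the sidedness. Because $I$ and $J$ are \emph{left} ideals, I must always multiply the unknown factor on the \emph{left} by an arbitrary element, which is what guarantees that the products $e^2$, $ef$, $fe$, $f^2$ land in the intended summands; writing the products in the other order would not be licensed by the left-ideal hypothesis.
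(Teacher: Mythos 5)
Your argument is correct: writing $1=e+f$ with $e\in I$, $f\in J$ and using uniqueness of the decomposition of $e\cdot 1$ and $f\cdot 1$ (with the left-ideal hypothesis placing $e^2, fe\in I$ and $ef, f^2\in J$) is the standard proof, and you handle the sidedness correctly. The paper lists this lemma among the technical lemmata stated without proof, so there is nothing to compare against; your write-up supplies exactly the expected argument.
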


% \begin{lemma}\label{ArtinSubalgebras}
%   Let $A$ be an artinian algebra and let $e\in A$ be an idempotent.
%   Then $eAe$ is an artinian algebra and $e\in eAe$ is its unit.
% \end{lemma}

An $A$-module $M$ is called semi-simple if every submodule of $M$ is a
direct summand of $M$.  An algebra $A$ is called semi-simple if $A$
viewed as an $A$-module is semi-simple.

\begin{lemma}\label{SemiSimpleArtinian}
  Assume $A$ is an artinian $k$-algebra. Then $A$ is left semi-simple
  if and only if it is right semi-simple if and only if $J(A)=0$.
\end{lemma}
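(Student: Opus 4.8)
The plan is to prove the two outer equivalences by establishing separately that $A$ is left semi-simple if and only if $J(A)=0$, and then invoking left-right symmetry. Since the standing assumptions record that $J(A)$ equals both the intersection of all maximal left ideals and the intersection of all maximal right ideals, the condition $J(A)=0$ is manifestly left-right symmetric; once I show it is equivalent to left semi-simplicity, the same argument applied to right modules (equivalently, to $A^{\op}$, which is again artinian) yields equivalence with right semi-simplicity, and the chain of biconditionals closes.

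For the forward direction, suppose $A$ is left semi-simple. Using the descending chain condition together with the defining property that every submodule is a direct summand, I would repeatedly split off minimal left ideals to write $A=S_1\oplus\cdots\oplus S_n$ as a finite direct sum of simple left modules; the process terminates because $A$ is artinian. Each simple summand $S_i$ is cyclic, so $S_i\cong A/\mathfrak{m}_i$ for a maximal left ideal $\mathfrak{m}_i$, whence $J(A)\subseteq\mathfrak{m}_i$ forces $J(A)S_i=0$. Therefore $J(A)A=\bigoplus_i J(A)S_i=0$, and evaluating at $1\in A$ gives $J(A)=0$.

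For the converse, assume $J(A)=0$. The key step, and the place where artinianness does the real work, is to show that $J(A)$ is a \emph{finite} intersection of maximal left ideals: among all finite intersections $\mathfrak{m}_1\cap\cdots\cap\mathfrak{m}_r$ of maximal left ideals the descending chain condition supplies a minimal one, and minimality forces every maximal left ideal to contain it, so that this finite intersection already equals $J(A)$. The canonical map $A\to\bigoplus_{i=1}^r A/\mathfrak{m}_i$ then has kernel $J(A)=0$, exhibiting $A$ as a submodule of a finite direct sum of simple modules. Since each $A/\mathfrak{m}_i$ is simple, hence semi-simple, and a submodule of a semi-simple module is again semi-simple---which one checks directly from the summand definition via the modular law---I conclude that $A$ is left semi-simple.

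The main obstacle I anticipate is the reduction in the converse to a finite intersection of maximal left ideals; this is exactly the point that separates the artinian case from arbitrary rings, and the minimality argument must be phrased with care. A secondary technical point is to verify, starting from the ``every submodule is a direct summand'' definition actually used in the paper, that simple modules are semi-simple and that submodules and finite direct sums of semi-simple modules remain semi-simple, so that both the splitting in the forward direction and the embedding in the converse are legitimate.
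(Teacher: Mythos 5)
Your proof is correct. Note, however, that the paper does not actually prove this lemma: it appears in Subsection~\ref{Lemmata} among the results listed explicitly ``without proofs,'' with the reader referred to Lam's books, so there is no in-paper argument to compare against. Your write-up is essentially the standard textbook proof from those references. Both directions are sound: the forward direction correctly uses that $J(A)$ annihilates every simple left module (since $J(A)$ is a two-sided ideal contained in every maximal left ideal) and evaluates at $1$; the converse correctly isolates the one place where the artinian hypothesis is essential, namely that the descending chain condition forces $J(A)$ to be a \emph{finite} intersection of maximal left ideals, after which $A$ embeds in a finite direct sum of simples and one invokes closure of semi-simplicity under submodules. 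The left-right symmetry via the two-sided characterization of $J(A)$ (recorded in the paper's standing assumptions) correctly closes the chain of biconditionals. The two secondary facts you flag---that simple modules are semi-simple and that submodules and finite direct sums of semi-simple modules are semi-simple under the ``every submodule is a direct summand'' definition---are standard and verifiable by the modular-law argument you indicate, so no genuine gap remains.
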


\begin{lemma}\label{IdempotentLifting} % \cite[Theorem 21.28]{Lam:AFirstCourse} 
   Let $J$ be a nil ideal.  Then any idempotent in $A/J$ can be lifted
   to an idempotent in $A$.  In other words, for every idempotent
   $\epsilon\in A/J$ there exists an idempotent $e\in A$ such that
   $e+J=\epsilon$.
 \end{lemma}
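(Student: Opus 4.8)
The plan is to begin from an arbitrary set-theoretic lift and then correct it, by a single universal polynomial, to an honest idempotent lying in the same coset. Concretely, I would choose any $a\in A$ with $a+J=\epsilon$. Since $\epsilon^{2}=\epsilon$ in $A/J$, the element $x:=a^{2}-a$ lies in $J$, and because $J$ is nil there is an integer $n\geq 1$ with $x^{n}=0$. The crucial structural observation is that $a$ commutes with $x$, with $1-a$, and with every polynomial in $a$; this commutativity is exactly what makes the correction step go through.

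Next I would manufacture the idempotent from the binomial expansion of $1=(a+(1-a))^{2n-1}$. Since $a$ and $1-a$ commute,
$$1=\sum_{k=0}^{2n-1}\binom{2n-1}{k}a^{k}(1-a)^{2n-1-k},$$
and I would split this sum at $k=n$ by setting
$$e:=\sum_{k=n}^{2n-1}\binom{2n-1}{k}a^{k}(1-a)^{2n-1-k},\qquad f:=\sum_{k=0}^{n-1}\binom{2n-1}{k}a^{k}(1-a)^{2n-1-k},$$
so that $e+f=1$. By construction every summand of $e$ carries a factor $a^{n}$ and every summand of $f$ carries a factor $(1-a)^{n}$.

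The key step is to show $ef=0$, and I regard this cancellation as the crux of the argument. Each term of the product $ef$ contains a factor of the form $a^{p}(1-a)^{q}$ with $p\geq n$ and $q\geq n$; since $a(1-a)=-(a^{2}-a)=-x$ and $a$, $1-a$ commute, such a factor equals $a^{p-n}(1-a)^{q-n}\bigl(a(1-a)\bigr)^{n}=(-1)^{n}a^{p-n}(1-a)^{q-n}x^{n}=0$. Hence $ef=fe=0$, and therefore $e=e(e+f)=e^{2}+ef=e^{2}$, so $e$ is idempotent. It is precisely the splitting at $k=n$ that forces both a high power of $a$ and a high power of $1-a$ into every cross term, and I expect this to be the only genuinely delicate point.

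Finally I would verify that $e$ lies in the coset $\epsilon$. Working modulo $J$ the element $a$ is idempotent, so $a^{k}\equiv a$ and $(1-a)^{j}\equiv 1-a$ for all $k,j\geq 1$, and in particular $a(1-a)\equiv\epsilon(1-\epsilon)=0\pmod{J}$. Thus in the sum defining $e$ every summand with $n\leq k\leq 2n-2$ reduces to $0$ modulo $J$, while the single term $k=2n-1$ reduces to $a^{2n-1}\equiv a\equiv\epsilon$. Hence $e+J=\epsilon$, completing the proof. As an alternative one could instead run the Newton-type iteration $e_{0}=a$, $e_{m+1}=3e_{m}^{2}-2e_{m}^{3}$, whose defect $e_{m+1}-e_{m+1}^{2}$ lies in ever higher powers of the ideal generated by $x$ and which stabilizes at an idempotent once the exponent exceeds $n$; but the closed-form binomial construction above avoids having to track the iteration.
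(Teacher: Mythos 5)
Your proof is correct. The paper states Lemma~\ref{IdempotentLifting} without proof, referring to Lam's textbooks; your argument --- lift $\epsilon$ to an arbitrary $a$, note $x=a^{2}-a$ is nilpotent with $x^{n}=0$, expand $1=(a+(1-a))^{2n-1}$ and split the sum at $k=n$ --- is exactly the standard argument found there, and every step checks out: the commutativity of all polynomials in $a$, the vanishing of the cross terms via $a^{p}(1-a)^{q}=a^{p-n}(1-a)^{q-n}\bigl(a(1-a)\bigr)^{n}=(-1)^{n}a^{p-n}(1-a)^{q-n}x^{n}=0$ for $p,q\geq n$, and the reduction modulo $J$ showing $e+J=a^{2n-1}+J=\epsilon$.
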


% \begin{lemma}\label{NilpotentLiftingArtinian}
%   Assume $A$ is an artinian algebra.  Then every (primitive)
%   idempotent in $A/J(A)$ lifts to a (primitive) idempotent in $A$.
% \end{lemma}

% \begin{lemma}\label{OrthogonalIdempotentLifting}\cite[Proposition 21.25]{Lam:AFirstCourse}
%   Assume $J$ is a nil ideal in $A$. Then for any finite set of
%   orthogonal idempotents $\{x_1,x_2,\cdots,x_n \}$ in $A/J$, there
%   exists a set of orthogonal idempotents $\{e_1,e_2, \cdots,e_n \}$ in
%   $A$ such that $e_i +J = x_i$ for all $i=1, \cdots, n$.
% \end{lemma}

\begin{definition}
  An idempotent $e\in A$ is called primitive when for every pair of
  orthogonal idempotents $e_1, e_2\in A$ if $e = e_1 + e_2$ then $e_1
  = 0$ or $e_2=0$.
\end{definition}

\begin{lemma}\label{PrimitiveIdempotentLifting}\cite[Proposition 21.22]{Lam:AFirstCourse} 
  Assume $A$ is an artinian algebra, $e \in A$ idempotent, $J \subset
  J(A)$. Then $e+J$ is a primitive idempotent of $A/J$ if and only if
  $e$ is a primitive idempotent of $A$.
\end{lemma}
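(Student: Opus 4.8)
The plan is to translate primitivity into the language of corner rings and then exploit that the quotient map $A\twoheadrightarrow A/J$ induces a well-behaved map on these corners. First I would record the standard characterization: an idempotent $e\in A$ is primitive if and only if the corner ring $eAe$, regarded as a ring with identity $e$, has no idempotents other than $0$ and $e$. Indeed, if $e=e_1+e_2$ is an orthogonal decomposition then $e_1=ee_1e\in eAe$ is such an idempotent, and conversely any idempotent $f\in eAe$ (so that $ef=fe=f$) yields the orthogonal decomposition $e=f+(e-f)$ inside $eAe$; in either direction the decomposition is nontrivial precisely when $f\notin\{0,e\}$.

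Next I would compare the corners of $A$ and $A/J$. Writing $\bar{x}=x+J$, the quotient map restricts to a surjective ring homomorphism $eAe\to \bar{e}(A/J)\bar{e}$ sending $eae\mapsto \bar{e}\,\bar{a}\,\bar{e}$, with kernel exactly $eAe\cap J=eJe$. Since $J$ is a two-sided ideal one checks that $eAe\cdot eJe\subseteq eJe$ and $eJe\cdot eAe\subseteq eJe$, so $eJe$ is a two-sided ideal of $eAe$, and we obtain a ring isomorphism
\[
 eAe/eJe\;\cong\;\bar{e}(A/J)\bar{e}.
\]
Moreover, because $A$ is artinian its radical $J(A)$ is nilpotent, so every element of $J\subseteq J(A)$ is nilpotent; hence $eJe\subseteq J$ is a \emph{nil} ideal of $eAe$.

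The crux is then a transfer principle for nil ideals: if $N$ is a nil two-sided ideal of a unital ring $R$, then $R$ has only the trivial idempotents $0$ and $1$ if and only if $R/N$ does. For the forward direction, if $f\in R$ is a nontrivial idempotent then $\bar{f}$ is idempotent and cannot be trivial, since $\bar{f}=\bar{0}$ would place the idempotent $f$ in the nil ideal $N$ forcing $f=0$, while $\bar{f}=\bar{1}$ would place the idempotent $1-f$ in $N$ forcing $f=1$. For the converse, any nontrivial idempotent of $R/N$ lifts, by Lemma~\ref{IdempotentLifting} applied to $R$, to an idempotent of $R$, which is again nontrivial. Applying this with $R=eAe$ (identity $e$) and $N=eJe$, together with the isomorphism above and the corner-ring characterization, yields the chain: $e$ is primitive in $A$ $\iff$ $eAe$ has only trivial idempotents $\iff$ $eAe/eJe$ has only trivial idempotents $\iff$ $\bar{e}(A/J)\bar{e}$ has only trivial idempotents $\iff$ $\bar{e}$ is primitive in $A/J$.

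I expect the main obstacle to be bookkeeping rather than depth: verifying that $eJe$ is genuinely a two-sided ideal of $eAe$ with the stated kernel, and taking care that the corner-ring characterization is applied with $e$, not $1$, as the identity, so that ``trivial idempotents'' means the set $\{0,e\}$. The one genuinely substantive point, namely that an idempotent lying in a nil ideal must vanish, is precisely what makes nontriviality transfer in \emph{both} directions, and it rests on the artinian hypothesis through the nilpotence of $J(A)$ together with the idempotent lifting provided by Lemma~\ref{IdempotentLifting}.
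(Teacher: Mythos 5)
Your proof is correct. Note, however, that the paper does not actually prove this lemma: it appears in the list of technical lemmata that are stated without proof and cited to \cite[Proposition 21.22]{Lam:AFirstCourse}, so there is no in-paper argument to compare against. Your route --- reducing primitivity to the absence of nontrivial idempotents in the corner ring $eAe$, identifying $\bar{e}(A/J)\bar{e}\cong eAe/eJe$ with $eJe$ a nil ideal, and then transferring triviality of idempotents across a nil ideal in both directions (lifting one way, and ``an idempotent in a nil ideal is zero'' the other way) --- is essentially the standard proof and meshes well with the paper's own toolkit, since the corner-ring characterization is exactly Proposition~\ref{LocalDecomposition} combined with Lemma~\ref{OnlyIdempotents}, and the lifting step is Lemma~\ref{IdempotentLifting}. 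One small remark: you invoke nilpotence of $J(A)$ (hence the artinian hypothesis) to conclude that an idempotent lying in $eJe$ vanishes, but this is available more cheaply from Lemma~\ref{Unit}: if $f\in J(A)$ is idempotent then $1-f$ is invertible and $f(1-f)=0$ forces $f=0$. The artinian hypothesis is still genuinely used, but only through the idempotent-lifting step, for which $J$ being nil (or the ring being semiperfect, as in Lam's formulation) is what matters.
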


\begin{lemma}\label{OnlyIdempotents}\cite[Corollary 19.19]{Lam:AFirstCourse}
  Assume $A$ is an artinian $k$-algebra.  Then $A$ is local if and
  only if the only idempotents of $A$ are $1$ and $0$.
\end{lemma}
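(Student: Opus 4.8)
The plan is to prove the two implications separately; the forward direction is elementary, while the reverse direction is where the artinian hypothesis does all the work.

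For the forward implication, assume $A$ is local. Let $e\in A$ satisfy $e^2=e$. I first note that a nontrivial idempotent cannot be invertible: multiplying $e^2=e$ by a hypothetical inverse of $e$ would force $e=1$. Hence if $e\notin\{0,1\}$, then both $e$ and $1-e$ (which is again idempotent) lie in $A\setminus A^\times$. By Lemma~\ref{NonUnitIdeal} the set $A\setminus A^\times$ is a two-sided ideal, so it is closed under addition; but then $1=e+(1-e)$ would be a non-unit, a contradiction. Thus $e\in\{0,1\}$, and this direction uses neither the chain condition nor anything beyond Lemma~\ref{NonUnitIdeal}.

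For the reverse implication, assume the only idempotents of $A$ are $0$ and $1$, and aim to show $A$ is local. The strategy is to analyze the semisimple quotient $A/J(A)$. Since $A$ is artinian its radical $J(A)$ is nilpotent, hence nil, so Lemma~\ref{IdempotentLifting} applies and every idempotent of $A/J(A)$ lifts to an idempotent of $A$. A nontrivial idempotent of $A/J(A)$ would lift to some $e\in A$ with $e\notin\{0,1\}$ (if $e$ were $0$ or $1$ its image would already be trivial), contradicting the hypothesis; therefore $A/J(A)$ also has only the idempotents $0$ and $1$. By Lemma~\ref{SemiSimpleArtinian} the quotient $A/J(A)$ is semisimple, so as a left module over itself it decomposes into simple summands. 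Were there more than one summand, Lemma~\ref{IdempotentSplitting} would produce orthogonal idempotents $1=e+f$ with $e,f\neq0$, i.e. a nontrivial idempotent; hence $A/J(A)$ is simple as a left module over itself, has no proper nonzero left ideals, and is therefore a division ring.

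It remains to deduce that $A$ is local. Given $a\in A$, if its image in the division ring $A/J(A)$ is nonzero, choose $b\in A$ whose image is the inverse of that of $a$, so that $ab-1$ and $ba-1$ lie in $J(A)$; by Lemma~\ref{Unit} both $ab$ and $ba$ are units, whence $a\in A^\times$. If instead the image of $a$ is zero then $a\in J(A)$, which contains no units. Consequently $A\setminus A^\times=J(A)$ is a two-sided ideal, and Lemma~\ref{NonUnitIdeal} yields that $A$ is local. The \emph{main obstacle} is the reverse direction, and within it the reduction to $A/J(A)$: it requires knowing that the artinian radical is nil so that idempotents lift, and then recognizing the idempotent-free semisimple quotient as a division ring; the forward direction and the final unit bookkeeping are routine.
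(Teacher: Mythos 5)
Your proof is correct. Note, however, that the paper does not prove this lemma at all: it appears in the list of ``lemmata we need, without proofs'' and is simply cited as \cite[Corollary 19.19]{Lam:AFirstCourse}, so there is no in-paper argument to compare against. Your derivation is a sound, self-contained reconstruction that uses only the paper's own toolkit: Lemma~\ref{NonUnitIdeal} for the forward direction (which, as you correctly observe, needs no chain condition), and Lemmata~\ref{Unit}, \ref{IdempotentLifting}, \ref{SemiSimpleArtinian} and \ref{IdempotentSplitting} for the converse via the reduction to the semisimple quotient $A/J(A)$ and the recognition that an idempotent-free semisimple ring is a division ring. Two small points worth flagging explicitly if this were to be written up: first, you invoke the nilpotency (hence nilness) of $J(A)$ for artinian $A$ to apply Lemma~\ref{IdempotentLifting}; this is standard but is not among the paper's listed lemmata, so it deserves its own citation. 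Second, when you use Lemma~\ref{IdempotentSplitting} to get $1=e+f$ with $e,f\neq 0$ from a nontrivial direct sum decomposition, you are implicitly using the standard refinement that $e$ and $f$ can be taken to generate the two summands (so that nonzero summands give nonzero idempotents); the lemma as stated in the paper does not quite say this, though it is what its proof gives.
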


\subsection{Idempotents}~

\begin{proposition}\label{LocalDecomposition}
  Assume $A$ is an artinian $k$-algebra and $e\in A$ is an idempotent.
  Then $e\in A$ is a primitive idempotent if and only if $eAe$ is a
  local ring.  
\end{proposition}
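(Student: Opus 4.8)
The plan is to apply Lemma~\ref{OnlyIdempotents} to the corner ring $eAe$, which is a unital associative $k$-algebra with identity element $e$. This reduces the statement to two tasks: first, verifying that $eAe$ is again artinian so that the lemma is applicable; and second, translating primitivity of $e$ in $A$ into a statement about the idempotents of $eAe$. The bridge between the two algebras is the elementary observation that any idempotent $f\in eAe$ automatically satisfies $ef=fe=f$, since $f=efe$ forces $ef=e^2fe=efe=f$ and likewise $fe=f$.

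First I would show that $eAe$ is artinian. Given a descending chain of left ideals $I_1\supseteq I_2\supseteq\cdots$ in $eAe$, consider the associated chain $AI_1\supseteq AI_2\supseteq\cdots$ of left ideals of $A$, which stabilizes because $A$ is artinian. The key identity is $e(AI_n)=I_n$: indeed $I_n=eI_n$ because every element of $eAe$ is fixed by left multiplication by $e$, and then $eAI_n=eAeI_n=(eAe)I_n=I_n$ since $I_n$ is a left ideal of $eAe$. Applying $e\cdot(-)$ to the equality $AI_n=AI_{n+1}$ therefore yields $I_n=I_{n+1}$, so the original chain stabilizes and $eAe$ is left artinian; the symmetric argument with right ideals handles the right condition.

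It then remains to prove that $e$ is primitive in $A$ if and only if the only idempotents of $eAe$ are $0$ and $e$, after which Lemma~\ref{OnlyIdempotents} finishes the proof. For the forward implication, suppose $eAe$ is local and write $e=e_1+e_2$ with $e_1,e_2$ orthogonal idempotents of $A$; then $ee_1=(e_1+e_2)e_1=e_1$ and $e_1e=e_1$ by orthogonality, so $e_1\in eAe$ is an idempotent, forcing $e_1\in\{0,e\}$. In either case one of $e_1,e_2$ vanishes, so $e$ is primitive. For the converse, suppose $e$ is primitive and let $f\in eAe$ be idempotent; setting $g=e-f$ and using $ef=fe=f$ one checks that $g^2=g$ and $fg=gf=0$, so $f$ and $g$ are orthogonal idempotents with $e=f+g$. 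Primitivity forces $f=0$ or $g=0$, i.e. $f\in\{0,e\}$.

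The main obstacle is the artinian verification in the second paragraph, which must be handled before any local-ring criterion can be invoked: once $eAe$ is known to be artinian, Lemma~\ref{OnlyIdempotents} applies and the remaining passage between idempotents of $A$ lying under $e$ and idempotents of $eAe$ is routine bookkeeping with the relations $ef=fe=f$.
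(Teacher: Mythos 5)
Your proof is correct and follows essentially the same route as the paper: both directions reduce to Lemma~\ref{OnlyIdempotents} applied to the corner algebra $eAe$, with the same bookkeeping relating orthogonal idempotent decompositions of $e$ in $A$ to idempotents of $eAe$. The only difference is that you explicitly verify that $eAe$ is artinian via the identity $e(AI_n)=I_n$, a detail the paper asserts without proof; this is a worthwhile addition but not a change of method.
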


\begin{proof}
  Assume $e$ is a primitive idempotent.  Note that, regardless of $e$
  being primitive, $eAe$ is a $k$-algebra with $e\in eAe$ being its
  identity element. Assume $eAe$ is not local.  Since $eAe$ is also an
  artinian algebra on its own right, there exists an idempotent
  $efe\in eAe$ which is not 0 or $e$ by Lemma~\ref{OnlyIdempotents}.
  Then $e = efe + (e - efe)$ and observe that since $efe(e-efe) =
  (e-efe)efe = 0$, we write $e$ as a sum of two orthogonal idempotents
  in $A$ which is a contradiction since $e$ is primitive.  So, $e$ is
  primitive implies $eAe$ is a local artinian $k$-algebra.
  Conversely, assume $eAe$ is local artinian.  Then we know that the
  only idempotents of $eAe$ are $e$ and $0$.  Assume that $e$ can be
  written as a sum of two orthogonal idempotents in $A$ as $e=u+v$.
  Then we see that $eu = ue = u^2 =u$ and $ev = ve = v^2 = v$ and
  therefore $e = u + v = eue + eve$ in $eAe$.  Since the only
  idempotents in $eAe$ are $0$ and $1$ we see that $eue = u = 0$ or
  $eve = v = 0$, i.e. that $e$ is primitive.  Thus we conclude that if
  $eAe$ is local artinian then $e$ is primitive.
\end{proof}

\begin{proposition}\label{SplittingIdempotents}
  (Compare this with \cite[Theorem 23.6]{Lam:AFirstCourse}. Note that
  any artinian ring is semi-perfect.)  Let $A$ be an artinian
  $k$-algebra.  Then the unit $1\in A$ is a finite sum of pairwise
  orthogonal primitive idempotents.
\end{proposition}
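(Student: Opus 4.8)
The plan is to argue by the descending chain condition on principal left ideals, which is available since $A$ is artinian. Temporarily call a nonzero idempotent $e\in A$ \emph{resolvable} if it can be written as a finite sum of pairwise orthogonal primitive idempotents of $A$; the assertion to be proved is exactly that $1$ is resolvable. I would argue by contradiction: assuming $1$ is not resolvable, I would manufacture an infinite strictly descending chain of left ideals $A=Ae_0\supsetneq Ae_1\supsetneq Ae_2\supsetneq\cdots$, contradicting the artinian hypothesis.

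The recursion rests on a single step. If a nonzero idempotent $e$ is not resolvable then it is not primitive (a primitive idempotent being resolvable as a one-term sum), so directly from the definition of primitivity there exist nonzero orthogonal idempotents $u,v$ with $e=u+v$. I claim at least one of $u,v$ is again not resolvable. Indeed, were $u=\sum_i u_i$ and $v=\sum_j v_j$ both resolutions into pairwise orthogonal primitives, then using $u_i\in uAu$, $v_j\in vAv$ and $uv=vu=0$ one checks $u_iv_j=u\,u_i\,(uv)\,v_j\,v=0$ and likewise $v_ju_i=0$; hence $\{u_i\}\cup\{v_j\}$ would be a pairwise orthogonal family of primitive idempotents summing to $e$, making $e$ resolvable, a contradiction. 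So from a nonresolvable $e$ I may pass to a nonresolvable nonzero idempotent, say $u$, with $e=u+v$.

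The main point, and where the artinian hypothesis does its work, is that this passage strictly shrinks the associated left ideal: from $e=u+v$ with $u,v$ nonzero orthogonal idempotents one gets $Ae=Au\oplus Av$ with both summands nonzero, whence $Au\subsetneq Ae$ (note $u=ue\in Ae$ gives $Au\subseteq Ae$, while $v\in Ae\setminus Au$, since $v\in Au$ would force $v=vv\in Av\cap Au=0$). Starting from the nonresolvable idempotent $e_0=1$ and applying this step repeatedly produces nonresolvable nonzero idempotents $e_0,e_1,e_2,\dots$ with $Ae_0\supsetneq Ae_1\supsetneq\cdots$, an infinite strictly descending chain of left ideals contradicting the descending chain condition. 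Therefore $1$ is resolvable, i.e. a finite sum of pairwise orthogonal primitive idempotents. I expect the only delicate verifications to be the strict inclusion $Au\subsetneq Ae$ and the preservation of orthogonality when amalgamating the sub-resolutions; both are short once the idempotent identities $u=eue$ and $v=eve$ are in hand.
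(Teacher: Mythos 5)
Your proof is correct, but it takes a genuinely different route from the paper's. The paper passes to the semisimple quotient $A'=A/J(A)$ (Lemma~\ref{SemiSimpleArtinian}), writes $A'$ as a finite direct sum of minimal left ideals, extracts pairwise orthogonal idempotents via Lemma~\ref{IdempotentSplitting}, deduces their primitivity from minimality of the ideals, and then lifts everything back to $A$ using the nil-ideal idempotent lifting (Lemma~\ref{IdempotentLifting}) together with Lemma~\ref{PrimitiveIdempotentLifting}. You instead run a direct descending-chain argument on the principal left ideals $Ae$ attached to non-``resolvable'' idempotents, using the splitting $Ae=Au\oplus Av$ and the amalgamation of sub-resolutions; both of your ``delicate'' verifications (strictness of $Au\subsetneq Ae$, and pairwise orthogonality of $\{u_i\}\cup\{v_j\}$ via $u_iv_j=u_i(uv)v_j=0$) check out. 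Your argument is more elementary and self-contained: it needs no Jacobson radical, no structure theory of semisimple rings, and no idempotent lifting, and it applies verbatim to any ring with the descending chain condition on direct summands of the left regular module. What the paper's approach buys is that it exhibits the primitive idempotents of $A$ as lifts of those of the semisimple quotient $A'$, and it exercises the lifting lemmata that the paper reuses later (e.g.\ in Theorem~\ref{QuotientPathAlgebra}); your route sidesteps the question of whether a family of pairwise orthogonal idempotents can be lifted orthogonally, which the paper's proof implicitly relies on.
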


\begin{proof} 
  %Since $A$ is artinian, so is $A/J(A)$ and since the Jacobson radical
  %of $A/J(A)$ is trivial we see that $A/J(A)$ is (left) semi-simple by
  %Lemma~\ref{SemiSimpleArtinian}.  So, $A/J(A)$ splits as finite
  %direct sum of minimal (left) ideals $\bigoplus_{i=1}^n I_i$.  Then,
  %by using Lemma~\ref{IdempotentSplitting} and induction on the number
  %of idempotents, we obtain that there exist idempotents $e_i'\in
  %I_i\subseteq A/J(A)$ such that $1 + J(A) = \sum_{i=1}^n e_i'$.
  %Since idempotents in $A/J(A)$ lift to idempotents in $A$ by
  %Lemma~\ref{NilpotentLiftingArtinian}, we get idempotents $e_i\in A$
  %such that $e_i + J(A) = e_i'$ for any $i=1,\ldots,n$, and a new
  %idempotent $u := \sum_{i=1}^n e_i$.  Note that since $1+J(A) =
  %u+J(A)$, we see that $1-u\in J(A)$ or equivalently that $u=1-j$ for
  %some $j\in J(A)$.  Then by Lemma~\ref{Unit} we infer that $u$ is a
  %unit and therefore $u=1$.  Now, we have a splitting of $1\in A$ as a
  %sum of idempotents $1 = \sum_{i=1}^n e_i$.  Assume $e_i = u_1 + u_2$
  %for a pair of orthogonal idempotents $u_1, u_2\in A$.  Then $e_i A$
  %is a direct sum $u_1A\oplus u_2A$.  Since $I_i:= e_i'A\subseteq
  %A/J(A)$ is simple, we must have $u_1 A/J(A) u_1 A = 0$ or $u_2 A/
  %J(A) u_2 A = 0$.  Then by Nakayama Lemma, we conclude that $u_1 = 0$
  %or $u_2=0$.

  Since $A$ is artinian, so is $A':=A/J(A)$ and since the Jacobson
  radical of $A'$ is trivial we see that $A'$ is (left) semi-simple by
  Lemma~\ref{SemiSimpleArtinian}.  So, $A'$ splits as finite direct
  sum of minimal (left) ideals $\bigoplus_{i=1}^n I_i$. Then by
  Lemma~\ref{IdempotentSplitting}
  % \cite[Exercise 1.7]{Lam:AFirstCourse}, 
  there exists pairwise orthogonal idempotents $e_i'\in A'$ such that
  $I_i = A'e_i'$ for $i = 1,\ldots,n$, and $1_{A'} = \sum_{i=1}^n
  e_i'$.  Now to show that $e_i'$ is primitive, assume $e_i' = f_1 +
  f_2$ for some orthogonal idempotents $f_1,f_2$ of $A'$.  Clearly,
  $A'e_i'\subset A'f_1 \oplus A'f_2$. For the converse inclusion,
  observe that any $x = r_1f_1+r_2f_2 \in A'f_1 \oplus A'f_2$, $xe_i'
  =x \in A'e_i'$.  Then $A'e_i'= A'f_1 \oplus A'f_2$. Since, $A'e_i'$
  is a minimal left ideal, we get $A'f_1 = 0$ or $A'f_2 = 0$. So
  $f_1=0$ or $f_2=0$. Any idempotent in $A'$ can now be lifted to an
  idempotent in $A$ by Lemma~\ref{IdempotentLifting}, and by
  Lemma~\ref{PrimitiveIdempotentLifting} the lifts can be chosen from
  primitive idempotents in $A$.
  % (or proof is: Let $e_i \in A$ be an idempotent such that $e_i
  % +J(A) = e_i'$. Assume $e_i= \alpha + \beta$ for some
  % orthogonal non-zero idempotents $\alpha$ and $\beta$. Since $J(A)$
  % contains no non-zero idempotents, $\alpha + J(A)$, $\beta + J(A)$
  % are non-zero. $(\alpha + J(A)) + (\beta + J(A)) = e_i +J(A) =
  % e_i'$,
  % $(\alpha + J(A))(\beta + J(A))=J(A)$, and $(\alpha + J(A))(\alpha
  % +
  % J(A)) = \alpha + J(A)$, $(\beta + J(A))(\beta + J(A))=(\beta +
  % J(A))$ contradicting to $e_i'$ being primitive.)
\end{proof}

\begin{proposition}\label{UniqueIdempotents}(Compare this with
\cite[Exercise 21.17]{Lam:AFirstCourse}) Assume $A$ is an artinian
  $k$-algebra.  If $E$ and $F$ are two finite sets of pairwise
  orthogonal primitive idempotents such that $1 = \sum_{e\in E} e =
  \sum_{f\in F} f$ then $|E|=|F|$. 
  % (Kullanmadık gerek yok bence-ispatı da yok asagıda: Moreover,
  % there is a unit $u \in A$, such that each idempotent in $E$ is a
  % conjugate by $u$ of a unique idempotent in $F$. )
\end{proposition}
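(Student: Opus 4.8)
The plan is to reduce everything to the semisimple quotient $A' := A/J(A)$ and then invoke the invariance of composition length. First I record that, since $A$ is artinian, $A'$ is semisimple by Lemma~\ref{SemiSimpleArtinian}; being semisimple and a finite sum of its own corners, it has finite length as a left module over itself. Write $\pi\colon A \to A'$ for the quotient map and $\bar x = \pi(x)$. Applying Lemma~\ref{PrimitiveIdempotentLifting} with $J = J(A)$, each $\bar e$ (for $e\in E$) and each $\bar f$ (for $f\in F$) is again a primitive idempotent of $A'$. Since $J(A)$ is nilpotent it contains no nonzero idempotent, so none of these images vanishes; moreover applying $\pi$ to $1 = \sum_{e\in E} e$ yields $1_{A'} = \sum_{e\in E}\bar e$ with the $\bar e$ pairwise orthogonal, and likewise for $F$.

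Next I convert each such relation into a direct-sum decomposition of the left module $A'$. From $1_{A'} = \sum_{e\in E}\bar e$ together with pairwise orthogonality one checks, by multiplying an arbitrary relation $\sum_{e} a_e\bar e = 0$ on the right by a fixed $\bar e_0$, that the sum is direct, so $A' = \bigoplus_{e\in E} A'\bar e$ and similarly $A' = \bigoplus_{f\in F} A'\bar f$. The key point is that each summand $A'\bar e$ is a \emph{simple} module: by Proposition~\ref{LocalDecomposition} the corner ring $\bar e A'\bar e$ is local, so $A'\bar e$ is indecomposable, and an indecomposable module over the semisimple ring $A'$ must be simple. Hence both displayed decompositions exhibit $A'$ as a direct sum of simple modules, with $|E|$ and $|F|$ summands respectively.

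Finally I appeal to the Jordan--Hölder theorem: the number of simple summands in a semisimple module of finite length is an invariant, equal to the composition length $\ell(A')$. Therefore $|E| = \ell(A') = |F|$.

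I expect the only delicate point to be justifying that $A'\bar e$ is genuinely simple rather than merely indecomposable; this is exactly where Proposition~\ref{LocalDecomposition} (primitivity of $\bar e$ $\Leftrightarrow$ locality of $\bar e A'\bar e$) combines with the semisimplicity of $A'$ to finish the argument. An alternative route bypasses the passage to $A'$ entirely: each $Ae$ is indecomposable because $\End_A(Ae)^{\op}\cong eAe$ is local by Proposition~\ref{LocalDecomposition}, and since a left artinian ring is left noetherian (Hopkins--Levitzki) the module $A$ has finite length, so the Krull--Schmidt theorem applied to $A = \bigoplus_{e\in E} Ae = \bigoplus_{f\in F} Af$ delivers $|E|=|F|$ directly.
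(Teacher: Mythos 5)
Your proof is correct, but it takes a genuinely different route from the paper. The paper stays inside $A$ and argues directly on the right ideals $fA$: it observes that $\End_A(fA)$ is a quotient of the local ring $fAf$, splits the identity as $L_f=\sum_{e\in E}L_{fef}$, extracts one invertible summand $L_{fef}$, and concludes from the indecomposability of $eA$ that $fA\cong eA$ --- in effect a hand-rolled exchange argument that produces an explicit bijection $\omega\colon E\to F$ together with the identities $e\cdot\omega(e)=e$ and $\omega(e)\cdot e=\omega(e)$. You instead pass to $A'=A/J(A)$, use Lemma~\ref{PrimitiveIdempotentLifting} and the nilpotency of $J(A)$ to see that the images remain nonzero primitive orthogonal idempotents, show each $A'\bar e$ is simple (local corner $\Rightarrow$ indecomposable $\Rightarrow$ simple over a semisimple ring), and count composition factors via Jordan--H\"older; your alternative via Krull--Schmidt on $A=\bigoplus_{e\in E}Ae$ is the standard textbook argument and is closest in spirit to what the paper does by hand. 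Your route is shorter and leans on classical machinery rather than an ad hoc computation, and it cleanly sidesteps the somewhat delicate endgame of the paper's proof (where the claimed uniqueness of $f$ for each $e$ is asserted rather quickly). What the paper's version buys in exchange is the explicit bijection $\omega$ with its multiplicative identities, which is quoted verbatim later in the proof of Lemma~\ref{UniqueGraph}; if one adopted your proof, one would need a small supplementary argument (e.g., reading off an isomorphism $eA\cong fA$ from the Krull--Schmidt matching and writing it as left multiplications) to recover that additional structure.
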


\begin{proof}
  We consider the right ideal $fA$ of $A$ for a fixed $f\in F$.  One
  can see that $\End_A(fA)$ the $k$-algebra of right $A$-module
  endomorphisms of $fA$ is a quotient of the local artinian
  $k$-algebra $fAf$ which acts by multiplication on the left, thus
  itself is local artinian.  We will denote $L_x$ the endomorphism of
  the right $A$-module $fA$ given by any $x\in fAf$.  Then the
  identity morphism $L_f$ is split as $L_f= \sum_{e\in E} L_{fef}$
  since $f=\sum_{e\in E} fef$.  Now, there must be at least one $e\in
  E$ such that $L_{fef}\in (fAf)^\times$ otherwise the element
  $L_{fef}$ would have been in the unique maximal ideal of $fAf$ for
  every $e\in E$ and $L_f = \sum_{e\in E} L_{fef}$ would have not been
  invertible.  Consider the sequence of morphisms of right $A$-modules
  $fA\xra{L_e}eA\xra{L_f}fA$.  Since $L_{fe} = L_{fef}$ is invertible,
  we see that $fA$ must be a direct summand of $eA$ as a right
  $A$-module.  However, $eAe$ is also local artinian, and therefore
  has no idempotents other than $0$ and $1$ by
  Lemma~\ref{OnlyIdempotents}.  This means $eA$ is indecomposable.
  Then $eA$ and $fA$ must be isomorphic as right $A$-modules. Thus get
  that $L_e$ has a two-sided inverse $L_{fx}\colon eA\to fA$ for some
  $fx$ which satisfies $f = fxe$ and $e = efx$.  Then we get $fe =
  fxe^2 = fxe = f$ and $ef = efxe = e^2 = e$.  These equalities imply
  that for each $e\in E$ the idempotent $f\in F$ is unique and vice
  versa.  This gives us $|E|=|F|$.
  %Moreover, $A$ is $\displaystyle \oplus_{e \in E} eA = \oplus_{f \in
  %  F} fA $, by Krull-Schmidt-Azumaya Theorem $|E|=|F|$.
  %Therefore, $L_e$ has a two-sided inverse $L_{fx}\colon eA\to fA$ for
  %some $fx$ which satisfies $f = fxe$ and $e = efx$.  Then we get $fe
  %= fxe^2 = fxe = f$ and $ef = efxe = e^2 = e$.  %We repeat the same
  %argument with the same $f\in F$ but this time we replace $A$ with
  %$A^\op$ and see that there exists $u\in E$ such that $uf=f$ and
  %$fu=u$.  But then
  %\[ e = ef = euf = 
  %\begin{cases}
  %  uf = u & \text{ if } e=u\\
  %  0      & \text{ otherwise }
  %\end{cases}
  %\]
  %Thus we conclude $e=u$ and $e=ef=uf=f$.  Now, we replace $A$ by
  %$A/AeA$, $E$ by $E\setminus\{e\}$ and $F$ by $F\setminus\{e\}$ and
  %proceed by induction.
\end{proof}

\subsection{The classification lemma}

\begin{definition}\label{Category}
  For an artinian $k$-algebra $A$ we associate a canonical $k$-linear
  category $\C{C}_A$ as follows.  The set of objects of $\C{C}_A$ is a
  finite set $E$ of orthogonal primitive idempotents which split $1_A$
  as $1_A = \sum_{e\in E} e$.  This set is nonempty by
  Proposition~\ref{SplittingIdempotents}.  For any $e,f\in
  Ob(\C{C}_A)$ we let
  \[ \Hom_{\C{C}_A}(e,f) = fAe \] and the composition is defined by
  the multiplication operation in $A$. 
\end{definition}

\begin{proposition}[The Classification Lemma]\label{ClassificationLemma}
  Let $\Hom_k(\C{C}_A,\lmod{k})$ be the category of $k$-linear
  functors from $\C{C}_A$ to $\lmod{k}$ and their natural
  transformations.  Then the category $\Hom_k(\C{C}_A,\lmod{k})$ is
  isomorphic to the category $\lmod{A}$ of left $A$-modules.
\end{proposition}

\begin{proof}
  Using decomposition of the identity element $1 = \sum_{e\in E} e$ in
  terms of a set of primitive idempotents, we split any left
  $A$-module as a direct sum of $k$-modules $M = \bigoplus_{e\in E}
  eM$ and for any $x\in fAe$, the left action by $x$ defines a
  $k$-linear morphism $L_x\colon eM\to fM$ for any $e,f\in E$.  In
  short, every left $A$-module defines a functor $\Phi(M)\colon
  \C{C}_A\to \lmod{k}$ which is defined by $\Phi(M)(e) = eM$ on the
  level of objects, and for any $fxe\in \Hom_{\C{C}_A}(e,f)$ we let
  $\Phi(M)(fxe):= L_{fxe}$ the $k$-linear operator defined on $M$ by
  left action of $fxe\in fAe$.  Conversely, for every functor $M\colon
  \C{C}_A\to \lmod{k}$ we define a left $A$-module $\Psi(M):=
  \bigoplus_{e\in E} M(e)$ where the left action of $A$ on $\Psi(M)$
  is defined by using the decomposition $A = \bigoplus_{e,f\in E}
  fAe$, i.e. $fxe\cdot m = 0$ unless $m\in M(e)$ and then $fxe\cdot m$
  is defined as $M(fxe)(m)$, the evaluation of $m$ under the
  $k$-linear morphism $M(fxe)\colon M(e)\to M(f)$.  One can easily see
  that $\Psi\Phi$ is the identity functor on the category of
  $\lmod{A}$, and conversely $\Phi\Psi$ is the identity functor on
  $\Hom_k(\C{C}_A,\lmod{k})$.
\end{proof}

\begin{corollary}
  The category $\C{C}_A$ we defined in Definition~\ref{Category} for a
  given artinian algebra $A$ is independent (up to an isomorphism) of
  the set of primitive idempotents splitting the identity.
\end{corollary}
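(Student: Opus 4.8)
The plan is to produce an explicit isomorphism of $k$-linear categories between the two instances of Definition~\ref{Category}, which I will write as $\C{C}_A^E$ and $\C{C}_A^F$, attached to two sets $E$ and $F$ of pairwise orthogonal primitive idempotents splitting $1_A$. By Proposition~\ref{UniqueIdempotents} we already know $|E|=|F|$, so at the level of objects the two categories have the same size; the real task is to match the hom-spaces $e'Ae$ and $f'Af$ compatibly with composition.

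First I would extract from the proof of Proposition~\ref{UniqueIdempotents} a bijection $\phi\colon E\to F$ with the property that $eA\cong\phi(e)A$ as right $A$-modules for every $e\in E$. Since a morphism of right $A$-modules between principal right ideals generated by idempotents is nothing but left multiplication by an element of the appropriate corner, such an isomorphism is witnessed by a pair of elements $u_e\in eAf$ and $v_e\in fAe$ (with $f:=\phi(e)$) satisfying $u_ev_e=e$ and $v_eu_e=f$. I would fix one such pair for each $e\in E$; these ``isomorphic idempotent'' data are the only ingredients the rest of the argument uses.

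Next I would define a functor $\Phi\colon\C{C}_A^E\to\C{C}_A^F$ by $\Phi(e)=\phi(e)$ on objects and, writing $f=\phi(e)$ and $f'=\phi(e')$, by $\Phi(x)=v_{e'}\,x\,u_e\in f'Af$ for a morphism $x\in e'Ae=\Hom_{\C{C}_A^E}(e,e')$. A short computation shows that $\Phi$ preserves composition: for $x\in e'Ae$ and $y\in e''Ae'$ one has $\Phi(y)\Phi(x)=v_{e''}\,y\,(u_{e'}v_{e'})\,x\,u_e=v_{e''}\,yx\,u_e=\Phi(yx)$, where the middle equality uses $u_{e'}v_{e'}=e'$ and $ye'=y$; and the relations $v_ee=v_e$, $eu_e=u_e$, $v_eu_e=f$ give $\Phi(\mathrm{id}_e)=v_eu_e=\mathrm{id}_{f}$. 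Symmetrically I would define $\Psi\colon\C{C}_A^F\to\C{C}_A^E$ by $\Psi(f)=\phi^{-1}(f)$ and $\Psi(y)=u_{e'}\,y\,v_e$, and then $u_{e'}v_{e'}=e'$ together with $u_ev_e=e$ yields $\Psi\Phi(x)=e'xe=x$ and dually $\Phi\Psi=\mathrm{id}$, so $\Phi$ is an isomorphism of $k$-linear categories.

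I expect the only delicate point to be the second step: pinning down the bijection $\phi$ together with the elements $u_e,v_e$ realizing $eA\cong\phi(e)A$. This is exactly the content already established inside the proof of Proposition~\ref{UniqueIdempotents}, so once that matching is recorded in the form above, the verification that $\Phi$ is a well-defined functor and that $\Psi$ is its two-sided inverse is a routine manipulation of the orthogonality and corner relations. Note finally that although $\Phi$ depends on the choices of $\phi$ and of the pairs $(u_e,v_e)$, its mere existence is what the statement asserts, so no canonicity needs to be checked.
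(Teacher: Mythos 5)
Your proof is correct, but it takes a genuinely different route from the paper's. The paper disposes of the corollary in one line via the Classification Lemma (Proposition~\ref{ClassificationLemma}): the functor categories $\Hom_k(\C{C}_A^E,\lmod{k})$ and $\Hom_k(\C{C}_A^F,\lmod{k})$ built from two choices $E$ and $F$ are each isomorphic to $\lmod{A}$, hence to each other. You instead construct an explicit isomorphism $\C{C}_A^E\cong\C{C}_A^F$ by conjugating the hom-spaces with elements $u_e\in eA\phi(e)$ and $v_e\in\phi(e)Ae$ witnessing $eA\cong\phi(e)A$; these do exist by the proof of Proposition~\ref{UniqueIdempotents} (in fact one may take $u_e=e\phi(e)$ and $v_e=\phi(e)e$, since that proof derives $e\phi(e)=e$ and $\phi(e)e=\phi(e)$, whence $u_ev_e=e$ and $v_eu_e=\phi(e)$), and your verifications of functoriality and of $\Psi\Phi=\mathrm{id}$, $\Phi\Psi=\mathrm{id}$ are routine and check out. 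As for what each approach buys: the paper's argument is shorter, but as stated it only shows that $\C{C}_A^E$ and $\C{C}_A^F$ have isomorphic categories of $k$-linear representations, i.e.\ that they are Morita equivalent as $k$-linear categories; passing from that to an isomorphism of the small categories themselves requires an additional Krull--Schmidt-type identification of the objects with the indecomposable representable functors, which the paper does not spell out. Your construction proves the assertion exactly as stated, with no such gap, at the mild cost of carrying the non-canonical choices of $\phi$ and $(u_e,v_e)$ --- harmless, as you note, since only the existence of an isomorphism is claimed. It also dovetails with Lemma~\ref{UniqueGraph}, which extracts precisely the same bijection and relations.
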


\begin{proof}
  The categories $\C{C}_A$ one can define for $E$ and for $F$ are both
  isomorphic to the category of left $A$-modules, and therefore,
  isomorphic to each other.
\end{proof}

\begin{remark}
  One can view our Classification Lemma~\ref{ClassificationLemma} as a
  generalization of {\em semi-trivial extensions} developed in
  \cite{Palmer:SemitrivialExtensions} for artinian algebras. The
  Classification Lemma, in fact, allows one to reduce an artinian
  algebra to a $n\times n$ generalized matrix ring where $n=|E|$ is
  the number of primitive idempotents splitting the identity.
  Semi-trivial extensions are the $2\times 2$ examples of this
  construction (not just for artinian algebras) in which one does not
  require primitivity of the idempotents. Since we work with artinian
  algebras, and we reduce our algebras to the submodules $eAf$ using
  idempotents $e$ and $f$ until they can not be further reduced,
  (i.e. until idempotents are primitive) we end up with a larger
  matrix ring. 
\end{remark}

\section{Global dimension of artinian algebras}\label{GlobalDimension}

\subsection{Reductions}

\begin{remark}
  Assume $M$ is a module in $\rmod{A}$.  Define the flat dimension 
  (also known as the weak dimension) of $M$ as
  \[ \fldim_A(M) = \sup \{n\in\B{N}|\ \Tor^A_n(M,N)\neq 0,\ N\in
  Ob(\lmod{A})\} \] where the quantity of the right side is defined to
  be $\infty$ if the subset is unbounded in $\B{N}$.  Similarly we define
  the projective dimension of $M$ as
  \[ \prdim_A(M) = \sup \{n\in\B{N}|\ \Ext_A^n(M,N)\neq 0,\ N\in
  Ob(\rmod{A})\} \] Now we define the global and the weak global
  dimension of $A$ as the quantities
  \[ \gldim(A) = \sup \{\prdim_A(M)\in\B{N}\cup\{\infty\}|\ M\in
  Ob(\rmod{A})\} \] and
  \[ \wgldim(A) = \sup \{\fldim_A(M)\in\B{N}\cup\{\infty\}|\ M\in
  Ob(\rmod{A})\} \] Since $A$ is not necessarily commutative,
  technically we should define a left and a right version of these
  dimensions.  But because of Hopkins-Levitzki Theorem, if $A$ is
  artinian it is also noetherian, and therefore, the left and right
  global dimensions and the global weak dimensions agree.
  (cf. \cite[Cor. 5.60]{Lam:Lectures})
\end{remark}

\begin{definition}
  The two sided bar complex of an algebra $A$ is defined as the
  differential graded $A$-bimodule $CB_*(A):= \bigoplus_{n\geq 0}
  A^{\otimes n+2}$ together with the differentials
  \[ d_n(a_0\otimes\cdots\otimes a_{n+1}) = \sum_{i=0} (-1)^i
  (a_0\otimes\cdots\otimes a_{i}a_{i+1}\otimes \cdots\otimes a_{n+1})
  \]
  We also define another differential graded module $CB'_*(A)$ which
  is defined as a graded module as $\bigoplus_{n\geq 0} A^{\otimes
    n+1}$ with similar differentials.
\end{definition}

\begin{remark}
  The two sided bar complex $CB_*(A)$ gives us a projective resolution
  of $A$ viewed as an $A$-bimodule.  Thus the modified bar complex
  $CB'_*(A)$ is acyclic.  For a right $A$-module $X$ and a left
  $A$-module $Y$, the differential graded $k$-module
  \[ C_*(X;A;Y) := X\otimes_A CB_*(A)\otimes_A Y \] together with the
  induced differentials give us the Tor-groups $\Tor^A_*(X,Y)$ in
  homology.  We also define a second complex
  \[ C'_*(X;A;Y) := X\otimes_A CB'_*(A)\otimes_A Y \] together with
  the induced differentials.  Since $C_*(X;A;A)$ is a projective
  resolution of $X$, we see that $C'_*(X;A;A)$ is acyclic.  The same
  is true for $C'_*(A;A;Y)$.
\end{remark}

\begin{proposition}\label{Reduction}
  Consider the subcomplex 
  \[ S_*(X;A;Y):= \bigoplus_{n\in\B{N}}\bigoplus_{e_i\in E} X
  e_0\otimes e_0Ae_1\otimes\cdots\otimes e_{n-1}Ae_n\otimes e_n Y
  \] of the complex $C_*(X;A;Y)$.  Then the injection $S_*(X;A;Y)\to
  C_*(X;A;Y)$ is a quasi-isomorphism.
\end{proposition}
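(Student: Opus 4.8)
The plan is to realize the inclusion $S_*\hookrightarrow C_*$ as the image, under the functor $X\otimes_A(-)\otimes_A Y$, of an inclusion of $A$-bimodule complexes, and then to produce an explicit chain homotopy exhibiting that inclusion as a homotopy equivalence. Concretely, let $R_*$ be the sub-bimodule-complex of $CB_*(A)$ whose degree $n$ piece is $\bigoplus_{e_0,\dots,e_n\in E} Ae_0\otimes e_0Ae_1\otimes\cdots\otimes e_{n-1}Ae_n\otimes e_nA$, that is, the summand of $A^{\otimes n+2}$ singled out by matching idempotent tags across every tensor junction (using $1=\sum_{e\in E}e$ to decompose each factor). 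Each $R_n$ is a projective bimodule, being a direct sum of summands $Ae_0\otimes e_nA$ of $A\otimes A$ tensored with a $k$-vector space, and one checks immediately that $X\otimes_A R_*\otimes_A Y=S_*(X;A;Y)$ while $X\otimes_A CB_*(A)\otimes_A Y=C_*(X;A;Y)$, with $R_*\hookrightarrow CB_*(A)$ inducing $S_*\hookrightarrow C_*$. Since additive functors preserve chain homotopies, it suffices to show $R_*\hookrightarrow CB_*(A)$ is a homotopy equivalence.

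For the homotopy inverse I would use the decomposition $1=\sum_{e\in E}e$ directly, defining $P\colon CB_*(A)\to CB_*(A)$ by inserting matched idempotents at every junction and summing,
\[ P(a_0\otimes\cdots\otimes a_{n+1}) = \sum_{e_0,\dots,e_n} a_0e_0\otimes e_0a_1e_1\otimes\cdots\otimes e_{n-1}a_ne_n\otimes e_na_{n+1}. \]
A short computation shows $P$ is idempotent with image $R_*$, and that $P$ is a chain map: when the bar differential multiplies two adjacent factors, the shared interior idempotent is summed away via $\sum_e e=1$, leaving matched tags on the two surviving outer positions, so that $Pd=dP$ term by term. Thus $P$ factors as $R_*\xra{i}CB_*(A)\xra{\pi}R_*$ with $\pi i=\mathrm{id}_{R_*}$, and $P=i\pi$.

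It then remains to check $P\simeq\mathrm{id}$. For $0\leq j\leq n$ let $P_j\colon CB_n(A)\to CB_n(A)$ match only the first $j$ junctions, so that $P_0=\mathrm{id}$ and $P_n=P$. I would exhibit each difference $P_{j+1}-P_j$ as a boundary via a prism operator $H_j\colon CB_n(A)\to CB_{n+1}(A)$ which applies $P_j$ and then inserts one extra idempotent tensor factor $\sum_{e}e$ at junction $j$; two of the boundary faces of $H_j$, after the inserted factor is summed away using $\sum_e e=1$, reproduce $P_j$ and $P_{j+1}$ respectively. Setting $H=\sum_j(-1)^j H_j$, the intermediate terms telescope and one obtains $dH+Hd=\mathrm{id}-P$.

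The routine steps---the decomposition identities, idempotency of $P$, and the bimodule reduction---are mechanical. The main obstacle is the last paragraph: getting the prism operators and their signs exactly right so that all the non-telescoping boundary faces of $H$ cancel against $H$ composed with the bar differential. This is the standard normalization-type bookkeeping, and once $dH+Hd=\mathrm{id}-P$ is verified, $i$ is a homotopy equivalence with inverse $\pi$, hence a quasi-isomorphism, and applying $X\otimes_A(-)\otimes_A Y$ finishes the proof.
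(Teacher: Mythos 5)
Your overall strategy is sound but genuinely different from the paper's. The paper never leaves the level of $C_*(X;A;Y)$: using the Pierce decomposition it splits $C_*(X;A;Y)$, as a complex, into $S_*(X;A;Y)$ plus the span of all chains containing at least one mismatched junction, and then shows that the mismatched summand is acyclic by identifying it with a direct sum of tensor products of the acyclic complexes $C'_*(X;A;Ae)$ and $C'_*(fA;A;Y)$ --- a K\"unneth-type argument over the field $k$, with no homotopies constructed. You instead lift the inclusion to the bimodule level, $R_*\hookrightarrow CB_*(A)$, and aim for a chain homotopy equivalence there, transported afterwards by $X\otimes_A(-)\otimes_A Y$. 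Your route costs more bookkeeping but buys more: $R_*$ is exactly the relative bar resolution $A\otimes_S A\otimes_S\cdots\otimes_S A$ for the separable subalgebra $S=\bigoplus_{e\in E}ke$, both $R_*$ and $CB_*(A)$ are projective bimodule resolutions of $A$, and a homotopy equivalence upstairs gives the statement for all $X$ and $Y$ at once and moreover shows directly that $S_*(X;A;A)$ is a projective resolution of $X$, which is what Lemma~\ref{NoPathsNoCohomology} actually uses. Indeed, once you observe that $R_*\to A$ is exact (it has the contracting homotopy inserting $\sum_e e\otimes e(\,\cdot\,)$ in the first slot, which does preserve $R_*$), you could skip the prisms entirely and invoke the comparison theorem: $i\pi$ and $\mathrm{id}$ are two lifts of $\mathrm{id}_A$ between projective resolutions, hence homotopic.

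The one step that fails as literally written is the prism operator. If $H_j$ applies $P_j$ and then inserts an \emph{uncorrelated} factor $\sum_e e$ at junction $j$, producing $\sum_e\cdots\otimes e_{j-1}a_j\otimes e\otimes a_{j+1}\otimes\cdots$, then both faces adjacent to the inserted factor collapse to $P_j$: the face $d_{j+1}$ gives $\sum_e\cdots\otimes e_{j-1}a_j\otimes ea_{j+1}\otimes\cdots=\cdots\otimes e_{j-1}a_j\otimes a_{j+1}\otimes\cdots$, which is again $P_j(x)$ rather than $P_{j+1}(x)$, because nothing ties the inserted $e$ to a right tag on $a_j$. The telescoping therefore does not occur. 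The fix is to insert the idempotent in correlated fashion,
\[ H_j(x)=\sum_{e_0,\dots,e_{j-1},e} a_0e_0\otimes\cdots\otimes e_{j-1}a_je\otimes e\otimes a_{j+1}\otimes\cdots\otimes a_{n+1}, \]
which is precisely ``insert $1\otimes_S$ at junction $j$'' in the relative bar complex; then $d_j$ recovers $P_j$, $d_{j+1}$ recovers $P_{j+1}$, and the remaining faces cancel against $H\circ d$ in the standard simplicial way. With that correction, or with the comparison-theorem shortcut above, your argument is complete and correct.
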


\begin{proof}
  Using the Pierce decomposition of $A$ as $\bigoplus_{e,f\in E}\ eAf$ we can
  split $C_*(X;A;Y)$ as
  \[ \bigoplus_{p\geq 0}\ \ \bigoplus_{e_i\in E} Xe_0\otimes
  e_1Ae_2\otimes\cdots\otimes e_{2p-1}Ae_{2p} \otimes e_{2p+1} Y
  \]
  One can easily see that
  \[ C_*(X;A;Y) = S_*(X;A;Y)\oplus \bigoplus_{e\neq f}
  C'_*(X;A;Ae)\otimes C'_*(fA;A;Y) \] by counting the number of
  idempotents $e_{2i}\neq e_{2i+1}$ in the full decomposition of
  $C_*(X;A;Y)$. The result follows from the fact that $C'_*(X;A;Ae)$
  and $C'_*(fA;A;Y)$ are acyclic.
\end{proof}

\subsection{The directed graph of an artinian algebra}

\begin{definition}\label{GraphOfAnArtinianAlgebra}
  Let us define a directed graph $\C{G}_A$ using an artinian algebra
  $A$ and a complete set of primitive idempotents $E$ splitting the
  identity. The set of vertices of $\C{G}_A$ is $E$. Two idempotents
  $e$ and $f$ are connected with a directed edge $f\leftarrow e$ if
  and only if the $k$-vector subspace $fAe$ is a non-zero.
\end{definition}

\begin{lemma}\label{UniqueGraph}
  $\C{G}_A$ is independent of the choice of any complete set of
  primitive idempotents splitting identity.
\end{lemma}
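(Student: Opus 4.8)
The plan is to convert this combinatorial assertion into a statement about an isomorphism invariant of $A$, namely the family of isomorphism classes of the indecomposable projective right $A$-modules $eA$, together with the vanishing pattern of homomorphisms between them. Since that data manifestly does not depend on how $1_A$ is split, the graph $\C{G}_A$ cannot either.

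First I would reinterpret the edges of $\C{G}_A$ in module-theoretic terms. For idempotents $e,f\in E$, evaluation at $e$ gives a $k$-linear isomorphism between $\Hom_A(eA,fA)$, the space of right $A$-module homomorphisms, and $fAe$: a homomorphism $\phi$ is determined by $\phi(e)=\phi(e)e\in fAe$, and conversely every element of $fAe$ defines such a $\phi$. Hence the directed edge $f\leftarrow e$ occurs in $\C{G}_A$ exactly when $\Hom_A(eA,fA)\neq 0$, so the entire edge set is encoded by the (non)vanishing of Hom-spaces among the modules $\{eA : e\in E\}$.

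Next I would produce a bijection $\sigma\colon E\to F$ with $eA\cong\sigma(e)A$ as right $A$-modules. Because $e$ is primitive, Proposition~\ref{LocalDecomposition} shows $\End_A(eA)\cong eAe$ is local, so each $eA$ is indecomposable, and likewise each $fA$ for $f\in F$. The two finite decompositions $A=\bigoplus_{e\in E}eA=\bigoplus_{f\in F}fA$ are therefore decompositions into indecomposables with local endomorphism rings, and the Krull--Schmidt (Azumaya) uniqueness theorem supplies a bijection $\sigma$ with $eA\cong\sigma(e)A$. In fact this only refines what is already in the paper: the proof of Proposition~\ref{UniqueIdempotents} matches each $e\in E$ with a unique $f\in F$ for which $eA\cong fA$, and I would simply set $\sigma(e)=f$.

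Finally I would check that $\sigma$ is an isomorphism of directed graphs. For any $e,f\in E$ the isomorphisms $eA\cong\sigma(e)A$ and $fA\cong\sigma(f)A$ induce a chain of $k$-linear isomorphisms $fAe\cong\Hom_A(eA,fA)\cong\Hom_A(\sigma(e)A,\sigma(f)A)\cong\sigma(f)A\sigma(e)$, so $fAe\neq 0$ if and only if $\sigma(f)A\sigma(e)\neq 0$. Thus $\sigma$ carries the edge $f\leftarrow e$ of $\C{G}_A$ on $E$ to the edge $\sigma(f)\leftarrow\sigma(e)$ of $\C{G}_A$ on $F$, and being a bijection realizing an ``if and only if'' it is a graph isomorphism preserving directions (loops $e\leftarrow e$, which are always present since $e\in eAe$, are preserved as well). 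The main obstacle is the third step: the translation in the first two paragraphs is routine, but the real content is producing the \emph{isomorphism-respecting} bijection $\sigma$ rather than a mere equality $|E|=|F|$ as in Proposition~\ref{UniqueIdempotents}, and this is precisely what Krull--Schmidt guarantees.
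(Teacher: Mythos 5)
Your proposal is correct and follows essentially the same route as the paper: both identify $fAe$ with $\Hom_A(eA,fA)$ and transport the non-vanishing pattern along a bijection $E\to F$ that matches $e$ with an $f$ for which $eA\cong fA$ as right $A$-modules. The paper extracts this bijection directly from the identities $e\cdot\omega(e)=e$ and $\omega(e)\cdot e=\omega(e)$ produced in the proof of Proposition~\ref{UniqueIdempotents}, which is exactly the refinement you describe; your appeal to Krull--Schmidt is just an alternative justification of the same matching.
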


\begin{proof}
  Assume $E$ and $F$ are two sets of primitive idempotents splitting
  the identity, and let $\C{G}_A^E$ and $\C{G}_A^F$ be the directed
  graphs defined in Definition~\ref{GraphOfAnArtinianAlgebra} for
  these sets.  By Proposition~\ref{UniqueIdempotents}, we have a bijection
  $\omega\colon E\to F$ which satisfies the identities
  \[ e\cdot\omega(e) = e \quad\text{ and }\quad \omega(e)\cdot e =
  \omega(e) \] for every $e\in E$.  These identities also give us an
  isomorphism between the right $A$-modules $eA$ and $\omega(e)A$.
  Now, for every $e,e'\in E$ we have an edge from $e$ to $e'$ if and
  only if $e'Ae$ is non-zero.  The $k$-vector subspace $e'Ae$ is also
  $\Hom_A(eA,e'A)$ the $k$-vector space of $A$-module morphisms from
  $eA$ to $e'A$.  Thus we obtain that $\Hom_A(eA,e'A)$ is non-zero if
  and only if $\Hom_A(\omega(e)A,\omega(e')A)$ is non-zero.  This
  proves that the graphs $\C{G}_A^E$ and $\C{G}_A^F$ we write using
  $E$ and $F$ are isomorphic.
\end{proof}

\begin{remark}
  For the rest of the article, we will fix an artinian algebra $A$ and
  a set $E$ of primitive idempotents splitting the identity $1_A$ of
  $A$.
\end{remark}

Let $\pi_0(\C{G}_A)$ be the set of connected components of
$\C{G}_A$. Note that since $E$ is finite, $\pi_0(\C{G}_A)$ is also
finite. For every $\alpha\in \pi_0(\C{G}_A)$ we let $\chi_\alpha := \{
e\in E|\ \text{the vertex $e$ appears in }\alpha \}$. Note that for
each $\alpha\in\pi_0(\C{G}_A)$
\[ e_\alpha := \sum_{e\in \chi_\alpha} e \] is a {\em central}
idempotent in $A$ and also is the identity element of the subalgebra
$A(\chi_\alpha)$. Moreover, the set $\{e_\alpha|\
\alpha\in\pi_0(\C{G}_A)\}$ forms a set of pairwise orthogonal {\em
  central} idempotents.

\begin{definition}
  For a right $A$-module $X$ we define the {\em support of $X$} as
  \[ E_X := \{e\in E|\ Xe\neq 0\} \] The support of a left $A$-module
  is defined similarly.
\end{definition}

\begin{lemma}~\label{NoPathsNoCohomology} Assume $X$ and $Y$ are right
  $A$-modules. If there are no paths starting in $E_Y$ and ending in
  $E_X$ in the directed graph $\C{G}_A$ then $\Ext_A^n(X,Y)=0$ for
  every $n\geq 1$. The analogous result for $\Tor^A_n(X,N)$ is true
  for every left $A$-module $N$.
\end{lemma}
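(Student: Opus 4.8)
The plan is to deduce both vanishing statements from the reduction in Proposition~\ref{Reduction}, whose whole point is that the (co)homology computing $\Tor$ and $\Ext$ can be read off from a complex whose tensor factors are exactly the spaces $eAf$ attached to the edges of $\C{G}_A$. The single observation I would build everything on is that a summand
\[ Xe_0\otimes e_0Ae_1\otimes\cdots\otimes e_{n-1}Ae_n\otimes e_nY \]
of the reduced complex $S_*(X;A;Y)$ is non-zero only when $Xe_0\neq 0$, $e_nY\neq 0$, and every $e_{i-1}Ae_i\neq 0$. By Definition~\ref{GraphOfAnArtinianAlgebra} this last condition says precisely that there is an edge $e_{i-1}\leftarrow e_i$ for each $i$, so the chain $e_0,\ldots,e_n$ assembles into a directed path $e_n\to\cdots\to e_0$ in $\C{G}_A$ that starts at $e_n\in E_Y$ and ends at $e_0\in E_X$. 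Thus the absence of paths from $E_Y$ to $E_X$ kills every such summand of positive degree.

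For the $\Tor$ statement this is immediate. I would take $X$ a right and $N$ a left $A$-module and invoke Proposition~\ref{Reduction}, so that $\Tor^A_*(X,N)$ is the homology of $S_*(X;A;N)$. For each $n\geq 1$ the degree-$n$ piece is the direct sum, over chains $e_0,\ldots,e_n$, of the spaces $Xe_0\otimes e_0Ae_1\otimes\cdots\otimes e_{n-1}Ae_n\otimes e_nN$; by the observation above each vanishes under the no-path hypothesis, now with $E_N$ in the role of $E_Y$. Hence $S_n(X;A;N)=0$ and $\Tor^A_n(X,N)=0$ for all $n\geq 1$.

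For the $\Ext$ statement, where $X$ and $Y$ are both right modules, the maneuver I would use is to manufacture a convenient projective resolution of $X$ out of the same reduction. Applying Proposition~\ref{Reduction} with the second variable equal to $A$, the complex $S_*(X;A;A)$ is quasi-isomorphic to the bar resolution $C_*(X;A;A)$ of $X$, and each of its terms $Xe_0\otimes\cdots\otimes e_{n-1}Ae_n\otimes e_nA$ is, as a right $A$-module, a direct sum of copies of the projective module $e_nA$; therefore $S_*(X;A;A)$ is itself a projective resolution of $X$. I would then compute $\Ext_A^n(X,Y)$ as the cohomology of $\Hom_A(S_*(X;A;A),Y)$ and, using $\Hom_A(e_nA,Y)\cong Ye_n$, identify the degree-$n$ cochain group with the direct sum over chains of $\Hom_k(Xe_0\otimes\cdots\otimes e_{n-1}Ae_n,\ Ye_n)$. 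This is again non-zero only along a directed path from $e_n\in E_Y$ to $e_0\in E_X$, so the hypothesis forces the cochain complex to vanish in every degree $n\geq 1$, whence $\Ext_A^n(X,Y)=0$ for $n\geq 1$.

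The routine parts—acyclicity, and the identification $\Hom_A(e_nA,Y)\cong Ye_n$—are standard. The step I expect to need the most care is the bookkeeping of the arrow orientation: I must check that the factors $e_{i-1}Ae_i$, read as edges $e_{i-1}\leftarrow e_i$, genuinely compose into a path beginning at the support index of the coefficient module $Y$ (or $N$) and terminating at the support index of $X$, in agreement with the categorical convention fixed in the standing assumptions. The only non-formal point is the $\Ext$ reduction itself: it is essential that $S_*(X;A;A)$ be projective, not merely quasi-isomorphic to a projective resolution, since $\Hom_A(-,Y)$ is applied to it directly, and this is exactly what the summand-wise splitting off of the right-projective factor $e_nA$ provides.
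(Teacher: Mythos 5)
Your proposal is correct and follows essentially the same route as the paper: the $\Tor$ statement is read off directly from Proposition~\ref{Reduction}, and the $\Ext$ statement uses $S_*(X;A;A)$ as a projective resolution of $X$, the identification $\Hom_A(Ze\otimes eA,T)\cong\Hom_k(Ze,Te)$, and the observation that every surviving summand encodes a directed path from $E_Y$ to $E_X$. Your extra care about the orientation of edges and about $S_*(X;A;A)$ being termwise projective (each term a direct sum of copies of $e_nA$) is exactly the right bookkeeping and agrees with the paper's argument.
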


\begin{proof}
  The proof for the analogous result for $\Tor^A_n(X,N)$ follows
  immediately from Proposition~\ref{Reduction}. Here we give a proof
  for $\Ext_A^n(X,Y)$.  Use the projective resolution of $X$ given by
  $S_*(X;A;A)$. Then the homology of the complex
  \[ \bigoplus_{n\geq 0}\bigoplus_{e_i\in E} \Hom_A(Xe_0\otimes
  e_0Ae_1\otimes\cdots e_{n-1}Ae_n\otimes e_nA,Y) \] with the induced
  differential, calculates $\Ext_A^*(X,Y)$.  On the other hand, we
  have $\Hom_A(Ze\otimes eA, T) \cong \Hom_k(Ze,Te)$ for every $e\in
  E$ and for all right $A$-modules $Z$ and $T$. So, we can rewrite the
  complex above as
  \[ \bigoplus_{n\geq 0}\bigoplus_{e_i\in E} \Hom_k(Xe_0\otimes
  e_0Ae_1\otimes\cdots e_{n-1}Ae_n,Ye_n) \] this time with the
  differentials
  \[ (d\phi)(a_0\otimes a_1\otimes\cdots\otimes a_n) =
  \sum_{i=0}^{n-1}(-1)^i\phi(\cdots\otimes a_ia_{i+1}\otimes\cdots) +
  (-1)^n\phi(a_0\otimes\cdots\otimes a_{n-1})a_n
  \] 
  for every $\phi\in \bigoplus_{e_i\in E} \Hom_k(Xe_0\otimes e_0
  Ae_1\otimes\cdots e_{n-1}Ae_n,Ye_n)$ and homogeneous tensor
  $a_0\otimes a_1\otimes\cdots\otimes a_n\in Xe_0\otimes e_0
  Ae_1\otimes\cdots e_{n-1}Ae_n$. The result follows.
\end{proof}

\begin{proposition}\label{ConnectedComponents}
  $\gldim(A) = \max_{\alpha\in\pi_0(\C{G}_A)}\gldim(A(\chi_\alpha))$
\end{proposition}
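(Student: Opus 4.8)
The plan is to prove the statement by showing that the category of $A$-modules decomposes as a product of the module categories of the subalgebras $A(\chi_\alpha)$, so that both projective resolutions and the $\Ext$-groups computing global dimension split along the connected components. The key observation is that the idempotents $e_\alpha = \sum_{e\in\chi_\alpha} e$ are \emph{central} and pairwise orthogonal, with $1_A = \sum_{\alpha} e_\alpha$. First I would record that this central orthogonal decomposition yields a ring-direct-product decomposition $A \cong \prod_{\alpha\in\pi_0(\C{G}_A)} A(\chi_\alpha)$, where $A(\chi_\alpha) = e_\alpha A e_\alpha = e_\alpha A = A e_\alpha$ because $e_\alpha$ is central.

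Next I would exploit this product structure at the level of modules. Since the $e_\alpha$ are central idempotents summing to $1$, every right $A$-module $X$ splits canonically as $X = \bigoplus_{\alpha} X e_\alpha$, and each summand $Xe_\alpha$ is a module over the block $A(\chi_\alpha)$ on which the other blocks act as zero. The essential input here is Lemma~\ref{NoPathsNoCohomology}: if $\alpha\neq\beta$ then the supports $E_{Xe_\alpha}\subseteq\chi_\alpha$ and $E_{Ye_\beta}\subseteq\chi_\beta$ lie in distinct connected components of $\C{G}_A$, so there are no paths between them and hence $\Ext_A^n(Xe_\alpha, Ye_\beta) = 0$ for all $n\geq 1$. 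Therefore, for any right $A$-modules $X,Y$,
\[ \Ext_A^n(X,Y) \;\cong\; \bigoplus_{\alpha\in\pi_0(\C{G}_A)} \Ext_A^n(Xe_\alpha, Ye_\alpha) \;\cong\; \bigoplus_{\alpha} \Ext_{A(\chi_\alpha)}^n(Xe_\alpha, Ye_\alpha), \]
the last isomorphism because $A$-module structures supported on $\chi_\alpha$ coincide with $A(\chi_\alpha)$-module structures, and extensions can be computed over the block. The inequality $\prdim_A(X) = \max_\alpha \prdim_{A(\chi_\alpha)}(Xe_\alpha)$ follows, and taking suprema over all $X$ gives $\gldim(A) = \max_\alpha \gldim(A(\chi_\alpha))$.

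I expect the main obstacle to be the bookkeeping that $\Ext_A^n(Xe_\alpha,Ye_\alpha)$ really agrees with $\Ext$ computed in the smaller category $\lmod{A(\chi_\alpha)}$, i.e.\ that restricting scalars along the projection $A\to A(\chi_\alpha)$ preserves and reflects projective resolutions. This is routine given the central-idempotent splitting, since $A(\chi_\alpha) = e_\alpha A$ is a projective $A$-module and a free rank-one $A(\chi_\alpha)$-module simultaneously, so projectives over one block are exactly the $\alpha$-supported projectives over $A$; but it must be stated carefully. An alternative, perhaps cleaner, route avoids $\Ext$ bookkeeping entirely: one argues directly that the reduced resolution $S_*(X;A;A)$ of Proposition~\ref{Reduction} splits as $\bigoplus_\alpha S_*(Xe_\alpha; A(\chi_\alpha); A(\chi_\alpha))$ because every tensor factor $e_{i-1}Ae_i$ vanishes unless $e_{i-1}$ and $e_i$ lie in the same component, forcing an entire chain $e_0,\ldots,e_n$ into a single $\chi_\alpha$. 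This makes the decomposition of the resolution manifest and reduces the claim to the elementary fact that projective dimension over a finite ring product is the maximum of the projective dimensions over the factors.
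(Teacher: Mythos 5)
Your proposal is correct and follows essentially the same route as the paper: the paper's own (very terse) proof says to either invoke Lemma~\ref{NoPathsNoCohomology} or to observe that $S_*(X;A;Y)$ splits as $\bigoplus_\alpha S_*(X;A(\chi_\alpha);Y)$, and your two arguments are precisely these two options, worked out via the central orthogonal idempotents $e_\alpha$. The only added value is that you make explicit the bookkeeping (the ring-product decomposition and the identification of $\Ext$ over a block with $\Ext$ over $A$) that the paper leaves implicit.
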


\begin{proof}
  Either we use Lemma~\ref{NoPathsNoCohomology}, or we observe that one
  can see that the differential graded $k$-module $S_*(X;A;Y)$ splits
  as a direct sum of differential graded $k$-modules as
  \[ S_*(X;A;Y) = \bigoplus_{\alpha\in \pi_0(\C{G}_A)}
  S_*(X;A(\chi_\alpha);Y) \] and then use the fact that weak global
  dimension and global dimensions agree when $A$ is artinian.
\end{proof}

\begin{remark}
  We will call an artinian algebra $A$ connected if $\C{G}_A$ is
  connected, i.e. has only one connected component.  An immediate
  corollary of Proposition~\ref{ConnectedComponents} is that the
  problem of calculating the global dimension of an artinian algebra
  reduces to calculating the global dimensions of the connected
  subalgebras constructed out of connected components of
  $\C{G}_A$. Therefore, we can assume, without any loss of generality,
  that $A$ is connected.
\end{remark}

\subsection{The Source-Sink Theorem}

\begin{definition}
  For any $U,V\subseteq E$ nonempty subset, we define
  \[ UAV := \bigoplus_{u\in U}\bigoplus_{v\in V} uAv \] and we also
  define a new (non-unital) subalgebra $A(U)$ of $A$ by
  \[ A(U) := UAU = \bigoplus_{u,u'\in U} uAu' \]
\end{definition}

\begin{definition}
  Assume $\Gamma$ is a directed graph with no loops or multiple edges,
  and $U$ is an arbitrary subset of vertices. The quotient graph
  $\Gamma/U$ is the new graph obtained from $\Gamma$ by contracting
  every vertex in $U$ to a single vertex, which is again denoted by
  $U$, and then deleting all loops and oriented multiple edges. A
  vertex is $v$ in $\Gamma$ is called {\em a sink} (resp. {\em a
    source}) if there are no directed edges leaving $v$ (resp. ending
  at $v$.)
\end{definition}

\begin{proposition}\label{LowerBound}
  Assume that there exists a proper subset of primitive idempotents
  $U\subset E$ with the property that the collapsed vertex $U$ in the
  quotient graph $\C{G}_A/U$ is a source or a sink. Then we have \[
  \max\{\gldim(A(U)), \gldim(A(U^c))\} \leq \gldim(A) \] In
  particular, if $A(U)$ has infinite global dimension then so does
  $A$.
\end{proposition}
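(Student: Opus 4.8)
The plan is to reduce to the sink case and then exploit the resulting triangular structure of $A$ together with the reduction complex of Proposition~\ref{Reduction}. First I would set $p := \sum_{u\in U} u$ and $q := \sum_{v\in U^c} v$, so that $p,q$ are orthogonal idempotents with $p+q = 1_A$ and, under the Pierce decomposition $A = pAp\oplus pAq\oplus qAp\oplus qAq$, one has $A(U)=pAp$ and $A(U^c)=qAq$. Since passing to $A^{\op}$ reverses every edge of $\C{G}_A$, interchanges sources and sinks, preserves global dimension (as $\gldim(A)=\gldim(A^{\op})$ for artinian $A$), and satisfies $A(U)^{\op}=A^{\op}(U)$, I may assume that the collapsed vertex $U$ is a sink. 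Unwinding the edge convention, ``$U$ is a sink'' means there is no arrow with tail in $U$ and head in $U^c$, i.e. $wAu=0$ for all $u\in U$, $w\in U^c$, which is precisely $qAp=0$. Thus $A$ is triangular: $A = pAp\oplus pAq\oplus qAq$.

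Next I would record that both corner algebras are quotients of $A$. From $qAp=0$ one verifies that $Aq$ and $pA$ are two-sided ideals and that the canonical projections induce algebra isomorphisms $A/Aq\cong pAp=A(U)$ and $A/pA\cong qAq=A(U^c)$. Consequently every $A(U)$-module (resp. $A(U^c)$-module) inflates along the quotient map to an $A$-module; and because each $e\in U^c$ lies in $Aq$ while each $e\in U$ lies in $pA$, an inflated $A(U)$-module $\tilde X$ has support $E_{\tilde X}\subseteq U$ and an inflated $A(U^c)$-module has support contained in $U^c$.

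The heart of the argument is a one-way property of paths. Writing $W$ for either $U$ or $U^c$, I claim that any chain $e_0,e_1,\ldots,e_n$ with $e_0,e_n\in W$ and every factor $e_{i-1}Ae_i\neq 0$ must have all $e_i\in W$. Indeed such a chain is a directed path $e_n\to e_{n-1}\to\cdots\to e_0$ in $\C{G}_A$, and the vanishing $qAp=0$ says that no arrow runs from $U$ into $U^c$; hence $U$ is absorbing and $U^c$ cannot be re-entered once it is left, so a path that both begins and ends in $W$ never leaves $W$. Feeding this into the reduction complex $S_*$ of Proposition~\ref{Reduction}, in the $\Ext$-incarnation appearing in the proof of Lemma~\ref{NoPathsNoCohomology}, applied to inflated modules $\tilde X,\tilde Y$ supported in $W$: the support conditions force $e_0,e_n\in W$, the one-way property forces all intermediate $e_i\in W$, and $e_{i-1}Ae_i = e_{i-1}A(W)e_i$ for $e_i\in W$, so the resulting complex is literally the complex computing $\Ext^*_{A(W)}(X,Y)$. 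This yields $\Ext^n_A(\tilde X,\tilde Y)\cong \Ext^n_{A(W)}(X,Y)$ for all $n$, hence $\prdim_{A(W)}(X)\leq \prdim_A(\tilde X)\leq \gldim(A)$, and taking suprema gives $\gldim(A(W))\leq \gldim(A)$ for both $W=U$ and $W=U^c$. The final assertion of the statement is then immediate.

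I expect the main obstacle to be the bookkeeping in this last step: checking that, once the support and one-way constraints are imposed, the differentials of the $A$-complex — both the internal multiplications $e_{i-1}Ae_i\otimes e_iAe_{i+1}\to e_{i-1}Ae_{i+1}$ and the two outer terms built from the module actions — restrict exactly to the differentials of the $A(W)$-complex, so that the identification is one of complexes and not merely of their graded pieces. The directional conventions for the edges of $\C{G}_A$ must also be tracked carefully, so that ``sink'' is correctly matched with $qAp=0$ rather than $pAq=0$.
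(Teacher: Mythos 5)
Your proof is correct, but it follows a genuinely different route from the paper's. Both arguments start from the same triangular decomposition: with the paper's edge convention, the sink condition is exactly $U^cAU=0$, as you verify. From there the paper argues by \emph{restriction}: given an $A$-projective resolution $P_*$ of a module, multiplying by the appropriate idempotent produces a resolution of the restricted module by modules that are flat over the corner algebra (the triangularity kills the relevant off-diagonal corner, so the restriction of a free $A$-module is essentially free over $A(U)$ on one side and over $A(U^c)$ on the other); this gives $\fldim$ over the corner $\leq \prdim$ over $A$, and one concludes using that weak and global dimension coincide for artinian rings, handling the second corner by repeating the argument with right modules rather than passing to $A^{\op}$. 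You argue instead by \emph{inflation}: you realize $A(U)$ and $A(U^c)$ as quotients of $A$, pull modules back, and use the reduced bar complex of Proposition~\ref{Reduction} (in the $\Ext$ form from the proof of Lemma~\ref{NoPathsNoCohomology}) together with the one-way property of paths to show that $\Ext^n_A$ of inflated modules literally computes $\Ext^n_{A(W)}$ for $W=U$ and $W=U^c$. Your version proves more than is needed (an $\Ext$-isomorphism rather than an inequality of dimensions), treats both corners at once in the sink case, and sidesteps any flatness assertion about the restriction of $A$ to its corners --- an assertion that is sensitive to matching ``sink'' with the correct vanishing off-diagonal corner, and which is the most delicate point of the paper's argument. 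The price is the bookkeeping you flag at the end, but that verification is routine: once the support and one-way constraints are imposed, the two complexes agree term by term, and the differentials agree because $A(W)$ is a subalgebra of $A$ and the inflated action of $A(W)$ coincides with the original one.
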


\begin{proof}
  We will use the fact that $A$ is artinian, and therefore, the global
  and weak dimensions in both left and right variations all agree.
  Assume without generality that $U$ is a sink in $\C{G}/U$, and that
  $M$ is a left $A$-module. One can easily see that $A$ splits as
  \[ A = UAU \oplus UAU^c \oplus U^cAU \oplus U^cAU^c \] However, we
  assumed that $U$ is a sink in $\C{G}/U$. This means $U^cAU$ is
  $0$. Therefore, given any free $A$-module $F= A^{\oplus I}$ one has
  a splitting of the form
  \[ F = A^{\oplus I} = (UAU)^{\oplus I} \oplus (U^c A)^{\oplus I} \]
  Hence any free $A$-module, restricted to a left $A(U)$-module
  becomes a flat left $A(U)$-module because $U^cA$ is a flat left
  $A(U)$-module. The same also holds for projective $A$-modules. This
  means any $A$-projective resolution $P_*$ when restricted to a
  $A(U)$-module becomes $U P_*$ which is a flat
  $A(U)$-module. Therefore for every left $A$-module $M$ we get
  \[ \prdim_A(M)\geq \fldim_{A(U)}(UM) \]
  which implies
  \[ \gldim(A) \geq \wgldim(A(U)) = \gldim(A(U)) \] If we repeat the
  same proof for right modules, we get the same inequality for
  $\gldim(A(U^c))$.
\end{proof}

\begin{theorem}[The Source-Sink Theorem]~\label{SourceSink} Assume $A$
  is an artinian algebra such that there is a proper subset of
  primitive idempotents $U\subset E$ with the property that the
  collapsed vertex $U$ in the quotient graph $\C{G}_A/U$ is a source
  or a sink. Then
  \[ \max\{\gldim(A(U)), \gldim(A(U^c))\} \leq \gldim(A) \leq 1 +
  \gldim(A(U)) + \gldim(A(U^c)) \] Moreover, if $A$ is flat over
  $A(U)$ and $A(U^c)$ then
  \[ \max\{\gldim(A(U)), \gldim(A(U^c))\} \leq \gldim(A) \leq
  \max\{1,\gldim(A(U)),\gldim(A(U^c))\} \]
\end{theorem}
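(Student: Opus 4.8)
The lower bound is already Proposition~\ref{LowerBound}, so the task is to establish the two upper bounds. First I would normalize the hypothesis: since $A$ is artinian, $\gldim A=\gldim A^{\op}$ and the left, right and weak dimensions all coincide, so by passing to $A^{\op}$ if necessary (which turns a source into a sink) I may assume the collapsed vertex $U$ is a sink, i.e. $U^cAU=0$. Writing $B:=A(U)=UAU$, $C:=A(U^c)=U^cAU^c$ and $M:=UAU^c$, the Pierce decomposition becomes the triangular form $A=B\oplus M\oplus C$ with $M$ a $(B,C)$-bimodule, so both inequalities are statements about the global dimension of a generalized triangular matrix algebra. I would prove them through the complex $S_*(X;A;Y)$ of Proposition~\ref{Reduction}, which computes $\Tor^A_*(X,Y)$.

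The key structural observation is that, because $U^cAU=0$, in every nonzero summand $Xe_0\otimes e_0Ae_1\otimes\cdots\otimes e_{n-1}Ae_n\otimes e_nY$ of $S_*(X;A;Y)$ the idempotent string $e_0,\dots,e_n$ can never pass from $U^c$ back to $U$; hence it is an initial block in $U$ followed by a terminal block in $U^c$, with at most one transition factor lying in $M$. Splitting off the strings lying entirely in $U$ and entirely in $U^c$ produces a subcomplex $T_*=S_*(XU;B;UY)\oplus S_*(XU^c;C;U^cY)$ computing $\Tor^B_*(XU,UY)\oplus\Tor^C_*(XU^c,U^cY)$, and I would check that the induced differential identifies the quotient with a shift $\overline S_*\cong\mathrm{Tot}\big(\tilde P_*\otimes_B M\otimes_C\tilde Q_*\big)[1]$, where $\tilde P_*$ (resp. $\tilde Q_*$) is the $B$-free (resp. $C$-free) bar-type resolution of $XU$ (resp. $U^cY$) supplied by restricting $S_*$. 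The hyper-$\Tor$ spectral sequence $E^2_{pq}=\Tor^B_p\big(XU,\Tor^C_q(M,U^cY)\big)$ then shows $H_n(\overline S_*)=0$ for $n>1+\gldim B+\gldim C$; feeding this together with the vanishing of $H_n(T_*)$ for $n>\max\{\gldim B,\gldim C\}$ into the long exact sequence of $0\to T_*\to S_*(X;A;Y)\to\overline S_*\to0$ bounds $\Tor^A_n(X,Y)$ and yields $\gldim A=\wgldim A\le 1+\gldim B+\gldim C$, the first upper bound.

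For the refined bound I would use that $A$ being flat over $B$ and over $C$ forces the bimodule $M$ to be flat on the relevant side (left over $B$, right over $C$). This collapses the spectral sequence to a single row: with $M$ right-$C$-flat one gets $H_n(\overline S_*)=\Tor^B_{n-1}\big(XU,\,M\otimes_C U^cY\big)$, already vanishing for $n>1+\gldim B$, and symmetrically for the other side. The remaining $+1$ I would absorb by examining the connecting homomorphism $\partial\colon H_n(\overline S_*)\to H_{n-1}(T_*)$, which in top degree is induced by the multiplication map $XU\otimes_B M\to XU^c$, $x\otimes m\mapsto xm$; flatness is precisely what should make this map behave well enough on $\Tor$ for $\partial$ to be injective in the critical degree, killing $H_n(S_*)$ one step earlier and giving $\gldim A\le\max\{1,\gldim B,\gldim C\}$.

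The hard part is this last step: controlling the connecting map and showing flatness forces injectivity in the top degree, equivalently producing a direct $A$-projective resolution, glued from $B$- and $C$-projective resolutions along $M$, of length $\le\max\{1,\gldim B,\gldim C\}$. The bookkeeping is delicate because $B$ and $C$ are non-unital subalgebras of $A$ with units $1_U$ and $1_{U^c}$, so one must consistently work with the non-degenerate parts $XU$, $U^cY$, $M\otimes_C(-)$ throughout; the reductions already in place (that $\gldim=\wgldim$ for artinian $A$ and the splitting of Proposition~\ref{Reduction}) are what keep this manageable.
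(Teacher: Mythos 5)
Your handling of the lower bound and of the first upper bound is essentially the paper's own argument in different packaging. The paper filters $S_*(X;A;Y)$ by the number of transitions of the idempotent string between $U$ and $U^c$; since $U$ is a sink that filtration has exactly two nontrivial layers, so your short exact sequence $0\to T_*\to S_*(X;A;Y)\to\overline S_*\to 0$ together with its long exact sequence is literally the paper's two-column spectral sequence. Your identification of $\overline S_*$ with a shift of the two-sided bar construction computing $XU\otimes^R_B M\otimes^R_C U^cY$, and the bound $1+\gldim B+\gldim C$ via the hyper-Tor spectral sequence $\Tor^B_p\bigl(XU,\Tor^C_q(M,U^cY)\bigr)$, is exactly the paper's statement that the second column has height at most $\gldim A(U)+\gldim A(U^c)$ and is shifted by one. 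That half is complete and correct.

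The gap is precisely where you flag it, and it cannot be closed by the connecting-homomorphism device you propose. In the flat case your argument yields $H_n(\overline S_*)=\Tor^B_{n-1}(XU,M\otimes_C U^cY)$, hence only $\gldim(A)\leq\max\bigl\{\gldim A(U),\gldim A(U^c),1+\min\{\gldim A(U),\gldim A(U^c)\}\bigr\}$; to improve this to $\max\{1,\gldim A(U),\gldim A(U^c)\}$ you would need $\Tor^B_p(XU,M\otimes_C N)=0$ for all $p>0$, i.e.\ that $M\otimes_C N$ is left $B$-flat for every left $C$-module $N$, and one-sided flatness of $M$ over $B$ and over $C$ separately does not give this. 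Indeed the refined inequality fails under the stated hypotheses: let $A$ be the incidence algebra of the commutative square (vertices $1,2,3,4$, arrows $1\to2$, $1\to3$, $2\to4$, $3\to4$, one commutativity relation) and $U=\{1,2\}$. Then $U$ is a source in $\C{G}_A/U$, both $A(U)$ and $A(U^c)$ are isomorphic to the hereditary algebra $k[\bullet\to\bullet]$, and a direct check shows $UA$, $AU$, $U^cA$, $AU^c$ are free over the respective subalgebras, so $A$ is flat over $A(U)$ and $A(U^c)$ in the one-sided sense the paper itself uses in Lemma~\ref{FlatLemma} and in Subsection~\ref{Examples}; yet $\gldim(A)=2>\max\{1,1,1\}$. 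The same example shows that the step you were trying to reproduce --- the paper's claim that flatness makes the $p=1$ column of the $E^1$-page collapse onto the $q=0$ line --- is exactly the vanishing of $\Tor^B_{>0}(XU,M\otimes_C U^cY)$, and it is false in general. So you should either stop at the bound your spectral sequence actually proves (which is the classical triangular-matrix-ring estimate of Palmer--Roos type), or strengthen the hypothesis to flatness of $UAU^c$ over $A(U)\otimes A(U^c)^{\op}$, under which the column genuinely collapses.
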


\begin{proof}
  We proved that the lower bounds hold in
  Proposition~\ref{LowerBound}.  Now, without loss of generality we
  assume $U$ is a sink in $\C{G}_A/U$. Otherwise we switch $U$ and
  $U^c$, because $U$ is a sink in $\C{G}_A/U$ if and only if $U^c$ is
  a source in $\C{G}_A/U^c$. We define a filtration
  \[ L^p_n = \sum_{q\leq p}\bigoplus_{n_0+\cdots+n_q=n-q}\
  \bigoplus_{\beta_i\in\{U,U^c\}} X \beta_0 \otimes
  A(\beta_0)^{\otimes n_0} \otimes \beta_0 A \beta_1\otimes\cdots
  \otimes A(\beta_{q-1})^{\otimes n_{q-1}}\otimes
  \beta_{q-1}A\beta_q\otimes A(\beta_q)^{\otimes n_q}\otimes \beta_q Y
  \]
  We consider the sequence of idempotents in each component
  $Xe_0\otimes e_0Ae_1\otimes\cdots e_{n-1}Ae_n\otimes e_nY$ as an
  oriented path in $\C{G}_A$.  The filtration counts the number of
  times each path enters in an out of $U$ and $U^c$. When we consider
  the spectral sequence associated with the filtration we see
  that $E^0_{p,q} = L^p_{p+q}/L^{p-1}_{p+q}$ is given by
  \begin{equation*}
    \bigoplus_{q_0+\cdots+q_p=q}\bigoplus_{\beta_i\neq
      \beta_{i+1}} X \beta_0 \otimes A(\beta_0)^{\otimes q_0} \otimes
    \beta_0 A \beta_1\otimes\cdots \otimes A(\beta_{p-1})^{\otimes
      q_{p-1}}\otimes \beta_{p-1}A\beta_p\otimes A(\beta_p)^{\otimes
      q_p}\otimes \beta_p Y
  \end{equation*}
  Consider the subalgebra $B = A(U)\oplus A(U^c)$. Then $E^0_{p,q}$
  represents a (multi-)product in $\C{D}^+(B)$ the derived category of
  $B$-modules
  \begin{equation}\label{NoAlternatingPaths}
    \bigoplus_{\beta_i\neq \beta_{i+1}} X\beta_0\otimes^R_B \beta_0
    A\beta_1\otimes^R_B\cdots\otimes^R_B\beta_{p-1}A\beta_p\otimes^R_B\beta_pY
  \end{equation}
  Notice that if $A$ is flat over $A(U)$ and $A(U^c)$ then the columns
  in the $E^1$-page collapse at $q=0$ line for $p>0$.  In any case, if
  $U$ were not a source and a sink, we would have had paths where the
  idempotents on the $q=0$ line which would alternate between
  idempotents in $U$ and $U^c$.  The first column $E^0_{0,q}$,
  regardless of $A$ is flat over $A(U)$ and $A(U^c)$, is represented
  by the product
  \[ X U\otimes^R_{B} UY \oplus X U^c\otimes^R_{B} U^c Y \] in the
  derived category while the second column $E^0_{1,q}$ is given by
  \[ X U\otimes^R_{B} UAU^c \otimes^R_{B} U^c Y \] and the rest of the
  terms are zero since $U$ is a sink. In the first page $E^1_{*,*}$
  the height of the first column (after taking supremum over all $X$
  and $Y$) is bounded by the maximum of $\gldim(A(U))$ and
  $\gldim(A(U^c))$. The height of the second column $E^1_{1,q}$ is
  bounded above by $\gldim(A(U))+\gldim(A(U^c))$. Since this is the
  second column, its contribution to the homological dimension of $A$
  is shifted by $1$. In the flat case, we see that $E^1_{p,q}=0$ for
  $p>1$ and $q>1$. The only non-zero term on $q=0$ line is at
  $E^1_{1,0}= \sum_{u\in U}\sum_{v\in U^c}
  Xu\otimes_{A(U)}uAv\otimes_{A(U^c)} vY$ and we have $E^1_{p,0}=0$
  for $p>1$.
\end{proof}

\begin{remark}
  One can obtain a version of Proposition~\ref{LowerBound} using
  Corollary~4.3 of \cite{FossumGriffithReiten:TrivialExtensions}, and
  a version of Theorem~\ref{SourceSink} as a consequence of
  Corollary~3 of \cite{PalmerRoos:GlobalDimension} for artinian
  algebras. 
\end{remark}

\begin{remark}\label{Recursion}
  One can use Theorem~\ref{SourceSink} recursively and reduce the
  calculation of lower and upper bounds for the global dimension of
  $A$ into calculations of the global dimensions of certain
  subalgebras $A(U)$ and $A(U^c)$ provided that the distinguished
  vertex $U$ in the quotient graph $\C{G}_A/U$ is a source or a sink
  at each recursion step. The recursion tree will split the graph
  $\C{G}_A$ into full subgraphs and corresponding artinian
  subalgebras. Since $E$ is finite, the recursion terminates and we
  obtain a partition $U_1,\ldots,U_m$ of $E$ where every subset
  $U'\subset U_i$ is neither source nor a sink in $\C{G}_{A(U_i)}/U'$
  for each $i=1,\ldots,m$. The partition $U_1,\ldots,U_m$, or the
  number of elements in the partition, need not be unique but for any
  such sequence we have the inequality
  \[ \max\{\gldim(A(U_1)),\ldots,\gldim(A(U_m))\}\leq \gldim(A)\leq
  (m-1)+\gldim(A(U_1))+\cdots+\gldim(A(U_m)) \] From this inequality
  we deduce that $A$ has finite global dimension if and only if all of
  these {\em terminal} subalgebras have finite global dimension. We
  will further refine this algorithm in the next section. In the
  current version and in the best possible case we obtain the
  following Corollary.
\end{remark}

\begin{corollary}\label{NoCycle}
  Assume $\C{G}_A$ has no oriented cycles. Then
  \[ \max\{\gldim(eAe)|\ e\in E\}\leq \gldim(A) \leq |E|
  -1 + \sum_{e\in E} \gldim(eAe) \] If $A$ is flat over $A(U)$ for
  every non-empty $U\subset E$ then
  \[ \max\{\gldim(eAe)|\ e\in E\}\leq \gldim(A) \leq
  \max\left(\{1\}\cup\{\gldim(eAe)|\ e\in E\}\right) \]
\end{corollary}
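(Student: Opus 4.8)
The plan is to induct on $|E|$, peeling off a single vertex at each step by means of the Source-Sink Theorem~\ref{SourceSink}. Since $\C{G}_A$ is a finite directed graph with no oriented cycles, it is a directed acyclic graph and hence has at least one sink vertex $e\in E$, i.e.\ a vertex with no outgoing edges. Because $\C{G}_A$ has no loops, contracting the single vertex $U=\{e\}$ deletes nothing, so $\C{G}_A/U=\C{G}_A$ and $U$ is a sink in $\C{G}_A/U$. The hypothesis of Theorem~\ref{SourceSink} is thus met with $A(U)=eAe$, giving
\[ \max\{\gldim(eAe),\gldim(A(U^c))\}\leq \gldim(A)\leq 1+\gldim(eAe)+\gldim(A(U^c)). \]

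For the inductive step I would first record two bookkeeping facts about $A(U^c)$. Writing $\epsilon=\sum_{f\in U^c}f$, we have $A(U^c)=\epsilon A\epsilon$, and orthogonality of the idempotents gives $f\,A(U^c)\,f=fAf$ for every $f\in U^c$; moreover, by Proposition~\ref{LocalDecomposition} each such $f$ is again primitive in $\epsilon A\epsilon$. One checks that $\C{G}_{A(U^c)}$ is exactly the full subgraph of $\C{G}_A$ spanned by $U^c$, which is again acyclic since deleting a vertex cannot create a cycle. Hence the inductive hypothesis applies to $A(U^c)$ (one fewer vertex) and yields $\max_{f\in U^c}\gldim(fAf)\leq\gldim(A(U^c))\leq(|U^c|-1)+\sum_{f\in U^c}\gldim(fAf)$. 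Substituting into the displayed inequalities and using $|U^c|=|E|-1$ collapses the lower bound to $\max_{f\in E}\gldim(fAf)$ and the upper bound to $(|E|-1)+\sum_{f\in E}\gldim(fAf)$; the base case $|E|=1$ is the tautology $\gldim(eAe)=\gldim(A)$.

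For the flat statement I would run the identical induction but invoke the flat half of Theorem~\ref{SourceSink}, which replaces the upper bound by $\max\{1,\gldim(eAe),\gldim(A(U^c))\}$; combining this with the inductive bound for $A(U^c)$ produces $\max(\{1\}\cup\{\gldim(fAf)\mid f\in E\})$ exactly. The hard part will be verifying that the flatness hypothesis propagates down the recursion: at the stage governed by a vertex set $W$ I need the corner $A(W)=e_WAe_W$ to be flat over $A(V)$ for each $V\subseteq W$, so that the flat version of the theorem actually applies (the global hypothesis only asserts flatness of $A$ itself over each $A(V)$). I would deduce this by a summand argument. Set $\eta=e_V$ and $\epsilon=e_W$, so that $\eta\epsilon=\epsilon\eta=\eta$. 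The right $A(V)$-module $\epsilon A$ is a direct summand of $A$, hence flat over $A(V)$, and the Pierce decomposition $\epsilon A=\epsilon A\epsilon\oplus\epsilon A(1-\epsilon)$ is in fact a decomposition of right $A(V)$-modules: the action of $A(V)$ sends $\epsilon A\epsilon$ into itself, while it annihilates $\epsilon A(1-\epsilon)$ because $(1-\epsilon)\eta=\eta-\epsilon\eta=0$. Thus $A(W)=\epsilon A\epsilon$ is a direct summand of a flat $A(V)$-module and so is flat, and the symmetric argument on the left handles left flatness. This supplies the flat hypotheses needed at every recursion step and completes the proof.
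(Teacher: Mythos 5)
Your proof is correct and follows essentially the same route as the paper: the corollary is obtained there by the recursive application of Theorem~\ref{SourceSink} described in Remark~\ref{Recursion}, which in the acyclic case terminates in the partition of $E$ into singletons, exactly your induction peeling off one sink at a time. Your explicit verification that flatness of $A$ over each $A(V)$ descends to the corners $A(W)=\epsilon A\epsilon$ via the summand argument is a detail the paper leaves implicit, and it is a welcome addition.
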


% \begin{proof}
%   Assume $\alpha = (u_1,\ldots,u_n)$ is an oriented path in
%   $\C{G}_A$. Since $\C{G}_A$ has no oriented cycles, the list of
%   vertices $(u_1,\ldots,u_n)$ does not contain repetitions. This means
%   all paths are composed of distinct vertices connected by
%   edges. Assume $\alpha$ is a path with the maximum length. Since $E$
%   is finite such a path exists. Then $u_1$ is necessarily a source and
%   $u_n$ is necessarily a sink otherwise there would have been other
%   paths longer than $\alpha$.  Then
%   \[ \gldim(A)\leq 1 + \gldim(u_nAu_n) + \gldim(A(\{u_n\}^c)) \] Now,
%   replace $A$ by $A(\{u_n\}^c)$ and proceed with recursion using
%   Theorem~\ref{SourceSink}. The inequality for the flat case is proved
%   similarly.
% \end{proof}

\subsection{Further reductions}

\begin{definition}\label{MoritaComplex}
  Fix a non-empty proper subset $U\subsetneq E$ and define a new
  differential graded $k$-module $R^U_*$ by letting $R^U_0 = A$ and
  $R^U_1 = \bigoplus_{u\in U} Au\otimes uA$ and then
  \[ R^U_n = \bigoplus Au_1\otimes u_1Au_2\otimes\cdots\otimes
  u_{n-1}Au_n\otimes u_n A
  \]
  Notice that $R^U_*$ is a differential graded submodule of
  $CB'_*(A)$. We define $R^U_*(X)$ and $R^U_*(Y)$ similarly for a
  right $A$-module $X$ and a left $A$-module $Y$. Now, let us compute
  the homology $H_n(R^U_*)$. If we consider the brutal truncation
  $R^U_{*>0}$
  \[ \bigoplus_{u\in U} Au \otimes uA \xleftarrow{d_2}
     \bigoplus_{u_1,u_2\in U} Au_1\otimes u_1Au_2\otimes u_2A \xleftarrow{d_3}
     \bigoplus_{u_1,u_2,u_3\in U} Au_1\otimes u_1Au_2\otimes u_2Au_3\otimes u_3A
     \xleftarrow{d_4}\cdots
  \]
  we obtain $S_*(A;A(U);A)$. So, it is clear that
  \[ H_n(R^U_*) = \Tor^{A(U)}_{n-1}(A,A) \] for every $n\geq 2$.
  For lower degrees we must consider
  \[ A \xleftarrow{d_1} \bigoplus_{u\in U} Au\otimes uA\xleftarrow{d_2} 
     \bigoplus_{u_1,u_2\in U} Au_1\otimes u_1Au_2\otimes u_2A 
  \]
  Since $d_1$ is really the multiplication morphism, for $n=0$ we will
  get
  \[ H_0(R^U_*) = \frac{A}{\sum_{u\in U} AuA} \] Then $ker(d_1) =
  ker\left(\bigoplus_{u\in U} Au\otimes uA\to A\right)$ and $H_0
  S_*(A;A(U);A) = \sum_{u\in U} Au\otimes_{A(U)}uA$ we get 
  \[ H_1(R^U_*) = ker\left(\sum_{u\in U} Au\otimes_{A(U)} uA\to
    A\right) \] We have similar results for $R^U_*(X)$ and $R^U_*(Y)$,
  here written only for $X$ as follows:
  \[ H_n R^U_*(X)=
     \begin{cases}
       X/(\sum_{u\in U} XuA) & \text{ if } n=0\\
       ker(\sum_{u\in U} Xu\otimes_{A(U)} uA\to X) & \text{ if } n=1\\
       \Tor^{A(U)}_{n-1}(X,A) & \text{ if } n\geq 2
     \end{cases}
  \]
\end{definition}

\begin{definition}
  One can define a graded multiplication structure on $R^U_*$ as
  follows: for any $\mathbf{x}=(x_0\otimes\cdots\otimes x_n)\in R^U_n$
  and $\mathbf{y}=(y_0\otimes\cdots\otimes y_m)\in R^U_m$ we define
  $\mathbf{x}\cdot\mathbf{y}\in R^U_{n+m}$ as
  \begin{align*}
    \mathbf{x}\cdot\mathbf{y}:= (x_0\otimes\cdots\otimes
    x_{n-1}\otimes x_ny_0\otimes y_1\otimes\cdots\otimes y_m)
  \end{align*}
  Note that for every $(a_0u\otimes ua_1)\in R^U_1$ we have
  \[ (a_0u\otimes ua_1) = (a_0\otimes u)a_1 = a_0(u\otimes ua_1) \]
  and for $n\geq 2$ and $(a_0u_1\otimes u_1a_1u_2\otimes\cdots\otimes
  u_{n-1}a_{n-1}u_n\otimes u_na_n)\in R^U_n$ we see
  \begin{align*}
    (a_0u_1\otimes u_1a_1u_2\otimes\cdots\otimes
    u_{n-1}a_{n-1}u_n\otimes u_na_n)
    = (a_0u_1\otimes u_1)\cdots(a_{n-1}u_n\otimes u_n)a_n
  \end{align*}
  In other words, $R^U_*$ is generated (not necessarily freely) by
  elements of degree $1$.
\end{definition}

\begin{proposition}
  $R^U_*$ is a differential graded $k$-algebra. Moreover, if $X$ is a
  right $A$-module then $R^U_*(X)$ is a differential graded right
  $R^U_*$-module. A similar statement holds for a left $A$-module $Y$
  and $R^U_*(Y)$.
\end{proposition}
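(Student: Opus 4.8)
The plan is to verify the differential graded algebra axioms for $R^U_*$ by hand and then obtain the two module statements as the one-sided versions of the same mechanism. Write $d$ for the differential inherited from $CB'_*(A)$; on a homogeneous element $(x_0\otimes\cdots\otimes x_n)\in R^U_n$ it is the signed sum of the $n$ inner face maps $\partial_i$ that replace the adjacent pair $x_i,x_{i+1}$ by their product $x_ix_{i+1}$, namely $d=\sum_{i=0}^{n-1}(-1)^i\partial_i$. This is exactly the restriction of the bar differential already used to identify $R^U_{*>0}$ with $S_*(A;A(U);A)$, so $d^2=0$ is inherited from $CB'_*(A)$ and only the compatibility of $d$ with the product needs to be checked.

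First I would check that the merge product is well defined, i.e. that $\mathbf{x}\cdot\mathbf{y}\in R^U_{n+m}$ for $\mathbf{x}\in R^U_n$ and $\mathbf{y}\in R^U_m$. This is pure idempotent bookkeeping: decorating $\mathbf{x}$ by $u_1,\dots,u_n$ and $\mathbf{y}$ by $v_1,\dots,v_m$, the last factor of $\mathbf{x}$ lies in $u_nA$ and the first factor of $\mathbf{y}$ lies in $Av_1$, so the merged factor $x_ny_0$ lies in $u_nAv_1$ and the merged tensor is decorated by the concatenation $u_1,\dots,u_n,v_1,\dots,v_m$, all in $U$. The degenerate cases $n=0$ or $m=0$ are just the left or right multiplication of $R^U_0=A$. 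The same bookkeeping shows that the merge operation sends $R^U_n(X)\otimes R^U_m\to R^U_{n+m}(X)$, because placing $X$ in the leftmost slot changes only the factor in position $0$, which the right action never touches.

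Associativity and unitality are routine. The product alters only boundary factors, so for a triple product the two seams it creates are disjoint unless the middle argument has degree $0$, in which case associativity collapses to associativity of multiplication in $A$. The unit is $1_A\in R^U_0$, since $1_A\cdot\mathbf{y}=(1_Ay_0\otimes y_1\otimes\cdots)=\mathbf{y}$ and $\mathbf{x}\cdot 1_A=(\cdots\otimes x_n1_A)=\mathbf{x}$.

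The crux, and the step I expect to be the main obstacle, is the graded Leibniz identity $d(\mathbf{x}\cdot\mathbf{y})=d(\mathbf{x})\cdot\mathbf{y}+(-1)^n\mathbf{x}\cdot d(\mathbf{y})$. My plan is to label the $n+m+1$ factors of $\mathbf{x}\cdot\mathbf{y}$ as $z_0,\dots,z_{n+m}$, with $z_n=x_ny_0$ the seam factor, and to split $d(\mathbf{x}\cdot\mathbf{y})=\sum_{i=0}^{n+m-1}(-1)^i\partial_i$ into three ranges. The faces with $i\le n-2$ act strictly inside $\mathbf{x}$ and match the corresponding terms of $d(\mathbf{x})\cdot\mathbf{y}$; the faces with $i\ge n+1$ act strictly inside $\mathbf{y}$ and match, up to the global sign $(-1)^n$, the terms of $\mathbf{x}\cdot d(\mathbf{y})$. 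The two delicate faces are $\partial_{n-1}$, which multiplies $x_{n-1}$ into the seam, and the seam face $\partial_n$, which multiplies the seam into $y_1$; applying associativity of $A$ in the forms $(x_{n-1}x_n)y_0=x_{n-1}(x_ny_0)$ and $(x_ny_0)y_1=x_n(y_0y_1)$ shows that $\partial_{n-1}$ supplies precisely the top face of $d(\mathbf{x})\cdot\mathbf{y}$ while $\partial_n$ supplies the bottom ($j=0$) face of $(-1)^n\mathbf{x}\cdot d(\mathbf{y})$. Summing the three ranges reconstitutes the full differential. I would then note that the identical three-range argument, with $x_0\in Au_1$ replaced by $m_0\in Xu_1$ (the position-$0$ factor that no face of the right action ever multiplies), proves the right $R^U_*$-module axioms for $R^U_*(X)$, and that $R^U_*(Y)$ is handled by the mirror-image computation merging at the left seam, yielding a left $R^U_*$-module.
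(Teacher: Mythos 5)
Your proposal is correct, and it reaches the Leibniz rule by a route that is organized differently from the paper's. The paper first observes (in the definition preceding the proposition) that $R^U_*$ is multiplicatively generated by $R^U_0=A$ together with the degree-one elements $(au\otimes u)$ and $(u\otimes ua)$, and then checks the graded Leibniz identity only on these generators: two degree-$0$/degree-$1$ mixed cases and one product of two degree-one generators, each reducing to a two-line computation such as $d\bigl((au\otimes u)(bv\otimes v)\bigr)=(aubv\otimes v)-(au\otimes ubv)=d(au\otimes u)(bv\otimes v)-(au\otimes u)d(bv\otimes v)$. You instead prove Leibniz for arbitrary homogeneous $\mathbf{x}\in R^U_n$ and $\mathbf{y}\in R^U_m$ by partitioning the $n+m$ face maps of the merged tensor into the range acting inside $\mathbf{x}$, the range acting inside $\mathbf{y}$, and the two seam faces $\partial_{n-1},\partial_n$, which are absorbed using associativity of $A$ in the forms $(x_{n-1}x_n)y_0=x_{n-1}(x_ny_0)$ and $(x_ny_0)y_1=x_n(y_0y_1)$. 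The trade-off: the paper's check is shorter on the page but tacitly relies on the (easy, standard, yet unstated) fact that a degreewise-defined $d$ satisfying Leibniz on algebra generators satisfies it on all products; your argument is longer but fully general, and it additionally records the well-definedness of the merge product (the idempotent bookkeeping showing $x_ny_0\in u_nAv_1$), associativity, and unitality, none of which the paper verifies explicitly. Your treatment of the module statements --- noting that the position-$0$ factor $m_0\in Xu_1$ is never touched by any face of the right action, and that $R^U_*(Y)$ is the mirror image --- matches the paper's one-line appeal to symmetry and is sound.
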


\begin{proof}
  Since $R^U_*$ is generated by elements of degree 1, we must check
  whether the Leibniz rule is satisfied only for elements of degree 0
  and generators of degree 1. We have
  \[ d((a_0u\otimes u)a_1) = a_0ua_1 = d(a_0u\otimes u)a_1 \] 
  and
  \[ d(a_0(u\otimes ua_1)) = a_0ua_1 = a_0d(u\otimes ua_1) \] for any
  $(a_0u\otimes ua_1)\in R^U_1$. For two generators of degree $1$ we
  check
  \begin{align*}
    d((au\otimes u)(bv\otimes v))
    = & d(au\otimes ubv\otimes v)
    = (aubv\otimes v) - (au\otimes ubv)
  \end{align*}
  On the other hand
  \begin{align*}
    d(au\otimes u)(bv\otimes v) - (au\otimes u)d(bv\otimes v)
    = & (aubv\otimes v) - (au\otimes ubv)
  \end{align*}
  are equal. So, the Leibniz rule
  \[ d(\mathbf{x}\cdot\mathbf{y}) = d(\mathbf{x})\cdot\mathbf{y} +
  (-1)^{|\deg(\mathbf{x})|}\mathbf{x}\cdot d(\mathbf{y})
  \]
  holds.
\end{proof}

\begin{proposition}\label{Morita}
  If $H_0(R^U_*) = A/(\sum_{u\in U} AuA)=0$ then $A$ is a flat
  $A(U)$-module and $H_n(R^U_*(X))=0$ for every $A$-module $X$ and for
  every $n\geq 0$. In that case $A$ is Morita equivalent to $A(U)$,
  and therefore we get $\gldim(A)=\gldim(A(U))$.
\end{proposition}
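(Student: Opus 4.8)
The plan is to recognize the hypothesis as the classical condition that a corner idempotent be \emph{full}, and then to run the standard Morita machinery. Set $e := \sum_{u\in U} u$, which is an idempotent of $A$ because the $u\in U$ are pairwise orthogonal. A direct check gives $A(U) = UAU = eAe$, with $e$ serving as the identity of $A(U)$, and $\sum_{u\in U} AuA = AeA$; hence the hypothesis $H_0(R^U_*) = A/(\sum_{u\in U} AuA) = 0$ is exactly the statement $AeA = A$, i.e. that $e$ is a full idempotent.

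The heart of the argument is to show that $Ae$ is a progenerator of $\lmod{A}$. It is finitely generated (by $e$) and projective, being a direct summand of $A$ through $A = Ae\oplus A(1-e)$. To see that it is a generator I would compute its trace ideal: the isomorphism $\Hom_A(Ae, A)\cong eA$, $\phi\mapsto\phi(e)$, identifies the trace ideal with $\sum_{x\in eA} Aex = Ae\cdot eA = AeA = A$. Thus $Ae$ is a progenerator whose endomorphism ring is, up to the usual opposite-ring identification, $eAe = A(U)$; Morita's theorem then provides an equivalence between $A$ and $A(U)$, together with the bimodule isomorphism $Ae\otimes_{A(U)} eA\cong A$ induced by multiplication (this is where fullness of $e$ is used to upgrade surjectivity to bijectivity). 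Since global dimension is a Morita invariant, $\gldim(A) = \gldim(A(U))$ follows immediately.

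It then remains to read off the flatness and the vanishing of $H_*(R^U_*(X))$ from this equivalence. Being a progenerator, $Ae$ is finitely generated projective, hence flat, as a right $A(U)$-module, and symmetrically $eA$ is flat as a left $A(U)$-module; this is the asserted flatness of $A$ over $A(U)$. Feeding this into the homology formulas recorded in Definition~\ref{MoritaComplex}, the groups $H_n(R^U_*(X)) = \Tor^{A(U)}_{n-1}(X,A)$ vanish for all $n\geq 2$ since $A$ is flat over $A(U)$. For $n=0$ we have $H_0(R^U_*(X)) = X/(\sum_{u\in U} XuA) = X/XeA$, which is zero because $XeA = X(AeA) = XA = X$ (using $AeA=A$ and $XA=X$). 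Finally, for $n=1$ the relevant map $\sum_{u\in U} Xu\otimes_{A(U)} uA\to X$ is, after the identification $\sum_{u} Xu\otimes_{A(U)} uA = Xe\otimes_{A(U)} eA$, obtained by applying $X\otimes_A-$ to the isomorphism $Ae\otimes_{A(U)} eA\cong A$; hence it is itself an isomorphism and $H_1(R^U_*(X)) = 0$.

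The main obstacle is the Morita bookkeeping: correctly matching $\End_A(Ae)$ with $A(U) = eAe$ (minding the opposite ring so that the equivalence lands in the intended category) and establishing the isomorphism $Ae\otimes_{A(U)} eA\cong A$ from fullness. Once the progenerator property and this counit isomorphism are secured, every remaining claim — the flatness, the $\Tor$-vanishing in degrees $\geq 2$, and the degree $0$ and $1$ cases — is a formal consequence of the standard natural isomorphisms of Morita theory together with the homology computations already carried out in Definition~\ref{MoritaComplex}.
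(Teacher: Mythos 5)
Your proposal is correct, and it rests on the same underlying Morita context as the paper's proof: with $e=\sum_{u\in U}u$ one has $A(U)=eAe$, $P=Ae$, $Q=eA$, and the hypothesis is exactly fullness $AeA=A$. The difference is in how the decisive isomorphism $Ae\otimes_{eAe}eA\cong A$ is obtained. You identify $Ae$ as a progenerator via the trace-ideal criterion and then quote the classical Morita theorems to get the counit isomorphism, flatness of $Ae$ and $eA$ over $eAe$, and Morita invariance of $\gldim$; the vanishing of $H_0$ and $H_1$ of $R^U_*(X)$ then falls out by applying $X\otimes_A(-)$ to that isomorphism. The paper instead works from scratch: it writes $1=\sum_{u}\alpha_u u\beta_u$, checks surjectivity and injectivity of the action morphism $XU\otimes_{A(U)}UA\to X$ by a direct element computation (which is precisely the content of the counit isomorphism, specialized to each $X$), and only afterwards packages $P$ and $Q$ into a Morita context. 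Your route buys brevity and a clean conceptual statement (``$e$ is a full idempotent''), at the cost of importing the standard theorems as black boxes; the paper's route is self-contained and makes visible exactly where the hypothesis $A=\sum_u AuA$ enters the computation. Both are sound, and your degree-by-degree bookkeeping for $H_n(R^U_*(X))$ ($n=0$, $n=1$, $n\geq 2$) matches the formulas recorded in Definition~\ref{MoritaComplex}.
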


\begin{proof}
  We have $H_0(R^U_*)=0$ if and only if $A=\sum_{u\in U} AuA$.  In
  particular, there exists $\alpha_u,\beta_u\in A$ such that $1=
  \sum_{u\in U} \alpha_u u\beta_u$.  Since $1= \sum_{u\in U} \alpha_u
  u\beta_u$ and we see that the action morphism $XU\otimes_{A(U)}
  UA\to X$ is surjective for every right $A$-module $X$,
  i.e. $H_0(R^U_*(X))=0$. Let $\sum_{u\in U} x_u u \otimes u a_u$ be
  in the kernel of the same morphism, i.e. let $0=\sum_{u\in U} x_u u
  a_u$. Then
  \begin{align*}
    \sum_u x_u u\otimes u a_u = & \sum_{u,u'} x_u u \otimes
    u a_u \alpha_{u'} u' \beta_{u'} = \sum_{u,u'} x_u u a_u
    \alpha_{u'} u' \otimes u'\beta_{u'} = 0
  \end{align*}
  since the tensor product is over $A(U)$. In other words, the action
  morphism is also injective, i.e. $H_1(R^U_*(X))=0$.  Put $P = AU :=
  \sum_{u\in U} Au$ and $Q=UA:= \sum_{u\in U} uA$ and then we see that
  $Q\otimes_A P \cong A(U)$.  Then $P\otimes_{A(U)}Q$ is isomorphic to
  $A$ as a $A$-bimodule via the multiplication morphism.  The result
  follows that $A$ is Morita equivalent to $A(U)$.  This means $UA$
  and $AU$ are projective $A(U)$-modules. Then $A$ is a flat
  $A(U)$-module. This gives us
  $H_n(R^U_*(X))=\Tor^{A(U)}_{n-1}(X,A)=0$ for $n\geq 1$.
\end{proof}

\begin{remark}
  The result we obtain in Proposition~\ref{Morita} shows obvious
  similarities with \cite{Palmer:SemitrivialExtensions} for artinian
  algebras if one considers Remark~2, Theorem~2 and Remark~3 of {\em
    ibid.} together. However, our proof is specific to artinian
  algebras, and there are differences in assumptions such as using
  flatness as opposed to projectivity.  Also, we realize the quotient
  $A/(\sum_{u\in U} AuA)$ as the 0-th homology of a differential
  graded algebra which acts on the (co)homology groups in a crucial
  way, as opposed to a separate invariant.
\end{remark}

\subsection{Flatness of $A$ as a bimodule over its subalgebras}
	
\begin{lemma}\label{FlatLowerBound}
  Assume that there exists a subset $U\subset E$ such that $A$ viewed
  as a (bi-)module over $A(U)$ is flat. Then $\gldim(A(U)) \leq
  \gldim(A)$
\end{lemma}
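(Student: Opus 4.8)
The plan is to mimic the lower-bound half of the proof of Proposition~\ref{LowerBound} (equivalently the Source--Sink Theorem), but to feed in flatness as a hypothesis instead of deducing it from a sink condition. Write $e_U = \sum_{u\in U} u$, so that $A(U) = e_U A e_U$ is a corner algebra of $A$ with identity $e_U$; being a corner of an artinian algebra it is again artinian, so $\gldim(A(U)) = \wgldim(A(U))$. It therefore suffices to prove $\wgldim(A(U)) \le \gldim(A)$, i.e. to bound $\fldim_{A(U)}(N)$ by $\gldim(A)$ for an arbitrary left $A(U)$-module $N$.

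The first step is to realize every left $A(U)$-module as the $U$-part of an $A$-module. Given $N$, set $M := AU \otimes_{A(U)} N = A e_U \otimes_{A(U)} N$, a left $A$-module; then $e_U M \cong e_U A e_U \otimes_{A(U)} N = A(U)\otimes_{A(U)} N = N$. The functor $N' \mapsto e_U N'$ from $\lmod{A}$ to $\lmod{A(U)}$ is exact, since multiplication by the idempotent $e_U$ splits $A$-modules as $k$-modules (equivalently $e_U A$ is projective as a right $A$-module, so $e_U A \otimes_A -$ is exact).

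Now choose a projective resolution $P_\bullet \to M$ over $A$ of length at most $\prdim_A(M) \le \gldim(A)$ (nothing to prove if this is infinite). Applying the exact functor $e_U(-)$ yields an exact complex $e_U P_\bullet \to e_U M = N$. Each $P_i$ is a direct summand of a free $A$-module, so $e_U P_i$ is a direct summand of a direct sum of copies of $e_U A = UA$. Here the hypothesis enters: flatness of $A$ over $A(U)$ means precisely that $UA$ is flat as a left $A(U)$-module and $AU$ is flat as a right $A(U)$-module (the complementary corner $e_{U^c}A$ carries the zero action and drops out of every tensor product over $A(U)$, so only the unital pieces $UA$ and $AU$ are seen --- this is the same reading of flatness used in Proposition~\ref{Morita}). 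Since flatness is inherited by direct sums and by direct summands, every $e_U P_i$ is a flat left $A(U)$-module, so $e_U P_\bullet \to N$ is a flat resolution of length $\le \gldim(A)$, giving $\fldim_{A(U)}(N) \le \gldim(A)$. Taking the supremum over $N$ and invoking $\gldim(A(U)) = \wgldim(A(U))$ completes the argument.

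The main point to get right is the meaning of the flatness hypothesis: because $A$ is not a unital one-sided $A(U)$-module, one must read ``$A$ is flat over $A(U)$'' as flatness of the corner bimodules $UA$ and $AU$ over $A(U)$, and verify that this is exactly what forces each $e_U P_i$ to be flat. Everything else is routine, namely the realization $e_U M \cong N$, the exactness of $e_U(-)$, and the stability of flatness under direct summands. In effect this lemma isolates the lower bound of the Source--Sink Theorem and shows that it requires only flatness, not the source/sink combinatorics on $\C{G}_A$.
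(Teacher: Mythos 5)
Your proof is correct and is essentially the paper's own argument: both induce the $A(U)$-module up to $A$ via $AU\otimes_{A(U)}(-)$, resolve the induced module projectively over $A$ within length $\gldim(A)$, and apply the exact corner functor $UA\otimes_A(-)=e_U(-)$, using flatness of $UA$ over $A(U)$ to see that the restricted complex is a flat resolution, then conclude via $\wgldim(A(U))=\gldim(A(U))$ for the artinian corner. Your extra care about what ``$A$ is flat over $A(U)$'' means for the non-unital piece $U^cA$ is a welcome clarification but not a change of method.
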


This lemma appears as Lemma 1 of
\cite{McConnell:GlobalDimensionSomeRings}.  The proof we present below
is different.
\begin{proof}
  Assume $M$ is a left $A(U)$-module. Then the induced module
  $Ind_{A(U)}^A(M) := \bigoplus_{u\in U} Au\otimes_{A(U)} uM$ is a
  left $A$-module. Let $P_*$ be a $A$-projective resolution of the
  induced $A$-module $Ind_{A(U)}^A(M)$ of length
  $\prdim_{A}(Ind_{A(U)}^A(M))\leq \gldim(A)$. Since $\bigoplus_{u\in
    U} uA$ is right $A$-projective and left $A(U)$-flat, and since the
  functor $\bigoplus_{u\in U} uA\otimes_A(\ \cdot\ )$ sends projective
  $A$-modules to flat $A(U)$-modules, we see that
  \[ \bigoplus_{u\in U} u P_* = \bigoplus_{u\in U} uA\otimes_A P_* \]
  is right $A(U)$-flat resolution of
  \[ M = \bigoplus_{u\in U}\sum_{u'\in U} uA\otimes_A Au' \otimes_{A(U)} u'M\]
  Then we easily see that $\wgldim(A(U))\leq \gldim(A)$. The result
  follows since $A(U)$ is also artinian.
\end{proof}

\begin{definition}
  Assume we have a directed graph $G$, and a 2-coloring of vertices,
  i.e. the set of vertices is split as a union of two disjoint
  subsets. In this set-up an oriented path $\alpha$ is called {\em
    alternating} if any two consecutive vertices have opposite colors.
\end{definition}

\begin{proposition}\label{Smear}
  Assume there exists a proper subset $U\subset E$ with the property
  that (i) $U$ is not a source or a sink in $\C{G}_A/U$, (ii)
  $H_0(R^U_*):= A/(\sum_{u\in U} AuA)$ is non-zero, (iii) $A$ is flat
  over both $A(U)$ and $A(U^c)$, and (iv) the global dimension $\gldim
  A(U)$ and $\gldim A(U^c)$ are both finite. In that case $\gldim(A)$
  is infinite if and only if $\C{G}_A$ has at least one alternating
  cycle with respect to the coloring $E = U \cup U^c$.
\end{proposition}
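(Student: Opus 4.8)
The plan is to re-run the spectral sequence of Theorem~\ref{SourceSink}, but now in the regime where $U$ is neither a source nor a sink, so that both crossing bimodules $UAU^c$ and $U^cAU$ are nonzero. Since $A$ is artinian its global and weak dimensions agree and are detected on simple modules: $\gldim A=\sup\{n:\Tor^A_n(S,T)\neq 0\}$ as $S$ ranges over simple right modules and $T$ over simple left modules (reduce a general $\Tor$ to simples through composition series and the commutation of $\Tor$ with direct limits). It therefore suffices to feed the filtered complex $S_*(S;A;T)$ into the spectral sequence for simple $S,T$, each supported at a single idempotent of $E$. Hypothesis (iii) makes $A$, hence each crossing bimodule, flat over $A(U)$ and $A(U^c)$, so every $\otimes^R_B$ in the $E^0$-page collapses to degree $0$; consequently the entries with $p\geq 1$ sit on the line $q=0$, while the column $p=0$ is $\Tor^{A(U)}_q(SU,UT)\oplus\Tor^{A(U^c)}_q(SU^c,U^cT)$, bounded by $D:=\max\{\gldim A(U),\gldim A(U^c)\}<\infty$ by (iv). A direct inspection of the higher differentials shows that for $p>D+1$ every $d^r$ ($r\geq 2$) touching position $(p,0)$ has its other end in a vanishing region, so $E^2_{p,0}=E^\infty_{p,0}$ there; since $E^\infty_{p,0}$ is a subquotient of $\Tor^A_p(S,T)$, we conclude that $\gldim A=\infty$ if and only if the bottom-row homology is nonzero in arbitrarily high degree for some pair of simples.

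Next I would identify this bottom row. On the $q=0$ line the complex is built by alternately tensoring the crossing bimodules $UAU^c$ and $U^cAU$ over $A(U)$ and $A(U^c)$. Because $S$ and $T$ are supported at single idempotents and the idempotents of $E$ are pairwise orthogonal, each tensor factor can only be glued at one vertex: if $S=S_e$ with $e\in U$, then $S_e\otimes_{A(U)} UAU^c$ is supported precisely at those $U^c$-vertices $v$ with $eAv\neq 0$, and the next tensor over $A(U^c)$ forces us to stay at $v$ before crossing again. Thus no within-block detour survives, and a length-$p$ summand is nonzero only when there is a genuine walk $e=e_0,e_1,\ldots,e_p$ in $\C{G}_A$, every edge of which crosses between $U$ and $U^c$, joining the support of $S$ to the support of $T$. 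In other words the bottom row is supported exactly on the strict alternating walks of the $2$-coloring $E=U\cup U^c$.

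For the converse direction (no alternating cycle implies finite global dimension) I would argue combinatorially. If $\C{G}_A$ had a closed alternating walk, a repeated vertex would occur at two positions of equal color, hence at an even separation, splitting the walk into two shorter closed alternating walks; by induction this produces a simple alternating cycle, contrary to hypothesis. Hence there is no closed alternating walk at all, every alternating walk is simple, and its length is at most $|E|-1$. By the previous paragraph the bottom row then vanishes for $p\geq |E|$, so the spectral sequence is concentrated in total degree below $\max\{|E|-1,D\}$ and $\gldim A<\infty$.

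The forward direction is the hard part. Given an alternating cycle $e_0,e_1,\ldots,e_\ell=e_0$ of even length $\ell$, I would test the spectral sequence against $S=S_{e_0}$ and $T=T_{e_0}$, so that the bottom-row complex is governed by the iterated products obtained by running around the cycle, which are nonzero in every degree that is a multiple of $\ell$ \emph{unless} wrapping around once trivializes the composite. Ruling out such trivialization is exactly the role of hypothesis (ii): by Proposition~\ref{Morita}, $H_0(R^U_*)\neq 0$ says that $A$ is not Morita equivalent to $A(U)$, so the composition maps $U^cAU\otimes_{A(U)}UAU^c\to A(U^c)$ and $UAU^c\otimes_{A(U^c)}U^cAU\to A(U)$ are not surjective and therefore land in the Jacobson radicals of the (artinian) blocks. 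The cyclic composition thus stays strictly inside the radical and, by Nakayama, its iterates never vanish, so one expects $E^\infty_{m\ell,0}\neq 0$ and hence $\Tor^A_{m\ell}(S_{e_0},T_{e_0})\neq 0$ for all $m\geq 1$. Turning this heuristic into a genuine nonvanishing statement — carefully following the tops and radical layers through the tensor products over the local rings $e_iAe_i$, and checking that flatness from (iii) keeps these products exact around the cycle — is the technical crux I expect to occupy the bulk of the proof.
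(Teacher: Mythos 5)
The overall architecture of your proposal (the Source--Sink filtration, flatness collapsing the spectral sequence onto the $q=0$ row and the $p=0$ column, and the identification of the bottom row with alternating walks) matches the paper, and your combinatorial reduction from closed alternating walks to simple alternating cycles is fine, as is the reduction of $\gldim$ to simple modules for artinian rings. The gap is in the forward direction, exactly where you flag the ``technical crux'': your plan is to show that the iterated composition around an alternating cycle ``stays strictly inside the radical and, by Nakayama, its iterates never vanish.'' This cannot work. In an artinian algebra the Jacobson radical is nilpotent, so iterated products of radical elements always vanish after finitely many steps, and Nakayama gives no nonvanishing statement about products. The paper's own Example~\ref{NilpotentCycleExample} (where $xy=yx=0$ on a $2$-cycle) is a case in which every composite around the cycle is literally zero and yet $\gldim(A)=\infty$. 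The nonvanishing of $\Tor^A_p$ in high degrees is carried not by products but by the tensor products $\cdots\otimes_{A(U)}uAv\otimes_{A(U^c)}vAu'\otimes_{A(U)}\cdots$, which are nonzero $k$-vector spaces along any alternating walk regardless of whether the multiplications vanish; the real issue is whether these classes survive the $d^1$ differentials of the bottom-row complex, and for simple test modules $S,T$ those differentials are genuinely nonzero, so you have no handle on the bottom-row homology.

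This is precisely where hypothesis (ii) enters in the paper, and your proposal uses it for the wrong purpose (non-surjectivity of composition maps via Proposition~\ref{Morita}, which you then cannot convert into a homological nonvanishing). The paper instead takes $X=Y=H_0(R^U_*)=A/(\sum_{u\in U}AuA)$ as the test modules --- nonzero exactly by (ii) --- so that $XU=0=UY$. With this choice every bottom-row term indexed by an alternating walk with an endpoint in $U$ vanishes, i.e.\ $E^1_{2m+1,0}=0$ for all $m$, and hence all differentials along the $q=0$ row are forced to be zero; the surviving even-degree terms, which are nonzero in arbitrarily high degree exactly when $\C{G}_A$ has an alternating cycle, are then permanent cycles, since the only remaining differentials leaving them land in the $p=0$ column, whose height is bounded by $\gldim A(U^c)<\infty$. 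To salvage your version you would have to replace the simple test modules by this quotient module (or otherwise kill the bottom-row differentials); as written, the forward implication is not proved.
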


\begin{proof}
  Assume $X=Y=H_0(R^U_*):=A/\left(\sum_{u\in U} AuA\right)$ is
  non-trivial. Consider $S_*(X;A;Y)$ and the filtration we defined in
  the proof of Theorem~\ref{SourceSink}.  Note that since $U$ is
  neither a source nor a sink in $\C{G}_A/U$, we see that $UAU^c$ and
  $U^cAU$ are both non-zero.  The first column of the associated
  spectral sequence in the $E^1$-page yields
  $E^1_{0,q}=\Tor^{A(U^c)}_q(X,Y)$ and in particular
  \[ E^1_{0,0} = X U^c\otimes_{A(U^c)} U^cY \] This is because $X=Y$
  are $A$-modules with the property that action of the subalgebra
  $A(U)$ is given by 0. We also see that $E^1_{1,0}$ is
  \[ XU\otimes_{A(U)} UAU^c\otimes_{A(U^c)} U^cY \oplus
  XU^c\otimes_{A(U^c)} U^cAU\otimes_{A(U)} UY
  \]
  and it is trivial.  Moreover, since we assumed $A$ is flat over
  $A(U)$ and $A(U^c)$, the spectral sequence collapses on two non-zero
  axis.  Since $U$ is not a source or a sink in $\C{G}_A/U$, one can
  write paths of arbitrarily large lengths alternating between $U$ and
  $U^c$ if and only if $\C{G}_A$ has at least one alternating path.
  This means on the $q=0$ row of the the $E^1$-page all odd
  dimensional vector spaces $E^1_{2m+1,0}$ are zero and there are
  non-trivial term of arbitrarily large even dimension if and only if
  $\C{G}_A$ has at least one alternating path. The phenomenon is
  repeated in homology since the differentials are necessarily 0.
  This means in the $E^2$-page on $q=0$ row we have non-zero terms
  with arbitrarily large degrees if and only if $\C{G}_A$ has at least
  one alternating path. The result follows because the $p=0$ column
  has bounded height due to the fact that $A(U^c)$ has finite global
  dimension.
\end{proof}

\subsection{The algorithm}\label{Explanation}~

\begin{figure}[h]
  \centering\small
  \tikzstyle{test} = [rectangle, draw, fill=blue!15, inner sep=3pt, 
                      text width=2.6cm, text badly centered, 
                      minimum height=1.2cm]
  \tikzstyle{conclusion} = [ellipse, draw, fill=green!15, inner sep=1pt, 
                              text width=1.9cm, text badly centered]
  \tikzstyle{line} = [draw, -latex']
  \begin{tikzpicture}[auto,>=latex', thick]
    \draw node[conclusion] (Start)
    { Start with $U\subsetneq E$ };
    \draw node[test, below of=Start, node distance=2cm] (SourceSink)
    { $U$ is a source or a sink in $\C{G}_A/U$ };
    \draw node[test, right of=SourceSink, node distance=4cm] (Morita)
    { $A = \sum_{u\in U} AuA$ };
    \draw node[conclusion, below of=Morita, node distance=2cm] (Reduction)
    { $\gldim A= \gldim A(U)$ };
    \draw node[test, below of=SourceSink, node distance=2cm] (TestFinite)
    { $\gldim A(U)$ and $\gldim A(U^c)$ are both finite. };
    \draw node[conclusion, below of=TestFinite, node distance=2.3cm] (Finite)
    { $\gldim A$ is finite. };
    \draw node[test, right of=Morita, node distance=4cm] (TestFlat1)
    { $A$ is flat over $A(U)$ };
    \draw node[test, below of=TestFlat1, node distance=2cm] (TestFinite2)
    { $\gldim A(U)$ is finite. };
    \draw node[test, below of=TestFinite2, node distance=2cm] (TestFlat2)
    { $A$ is flat over $A(U^c)$ };
    \draw node[test, below of=TestFlat2, node distance=2.6cm] (TestFinite3)
    { $\gldim A(U^c)$ is finite. };
    \draw node[conclusion, below of=Reduction, node distance=2cm] (Infinite)
    { $\gldim A$ is infinite.};     
    \draw node[test, below of=Finite, node distance=2.3cm] (Alternating)
    { $\C{G}_A$ has a $U$-$U^c$ alternating cycle. };
    \draw node[conclusion, right of=TestFlat1, node distance=4cm, fill=red!15] (Stop)
    { Test another $U\subsetneq E$. };
    \path[line] (Start) -> (SourceSink);
    \path[->] (SourceSink) edge node {YES} (TestFinite);
    \path[->] (SourceSink) edge node {NO} (Morita);
    \path[->] (Morita) edge node {NO} (TestFlat1);
    \path[->] (Morita) edge node {YES} (Reduction);
    \path[->] (TestFinite) edge node {YES} (Finite);
    \path[->] (TestFinite) edge node {NO} (Infinite);  
    \path[->] (TestFinite2) edge node {YES} (TestFlat2);
    \path[line] (TestFlat2) -- +(2,0) -| node[near end] {NO} (Stop);
    \path[<-] (Infinite) edge node {YES} (Alternating);
    \path[<-] (Finite) edge node {NO} (Alternating);
    \path[<-] (Infinite) edge node {NO} (TestFinite2);     
    \path[->] (TestFlat1) edge node {YES} (TestFinite2);     
    \path[->] (TestFlat2) edge node {YES} (TestFinite3);     
    \path[->] (TestFinite3) edge node {NO} (Infinite);     
    \path[->] (TestFinite3) edge node {YES} (Alternating);     
    \path[->] (TestFlat1) edge node {NO} (Stop);       
  \end{tikzpicture}    
  \normalsize
  \caption{An algorithm to estimate the global dimension of
    $A$.}\label{Algorithm}
\end{figure}
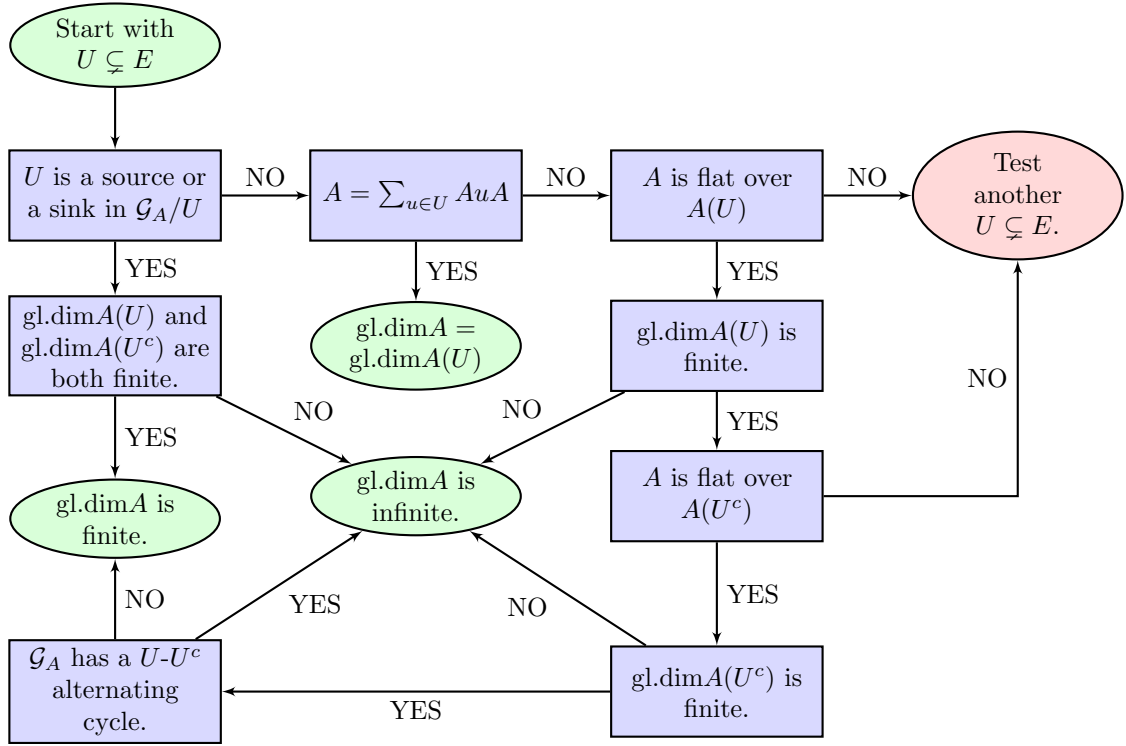

We summarize results we proved in this section in
Figure~\ref{Algorithm} as an algorithm. This algorithm allows us to
decide whether the global dimension of an artinian algebra is finite
or infinite for a large class of algebras.  However, there are also
cases in which our algorithm fails to produce a definitive answer. We
start with a proper subset $U\subsetneq E$.  If $U$ we chose is a
source or a sink in the quotient graph $\C{G}_A/U$, we use
Theorem~\ref{SourceSink} and reduce the calculation of the global
dimension of $A$ to computing the global dimensions of the subalgebras
$A(U)$ and $A(U^c)$.  Then we apply the algorithm to the subalgebras
$A(U)$ and $A(U^c)$ recursively. If the subset $U$ we chose satisfies
the condition that $A = \sum_{u\in U} AuA$ then we observe that $A$
and $A(U)$ have the same global dimension by
Proposition~\ref{Morita}. In that case we replace $A$ by $A(U)$ and
proceed recursively.  If it happens that that $U$ is not a source or a
sink in the quotient graph $\C{G}_A/U$, and that $A$ is not equal
$\sum_{u\in U}AuA$ we use a combination of Lemma~\ref{FlatLowerBound}
and Proposition~\ref{Smear} to proceed.  The algorithm will reduce the
calculation of upper and lower bounds for global dimensions of an
artinian algebra $A$ at hand until it can no longer reduce $A$ using
the graph $\C{G}_A$.  The type of artinian algebras that can not be
further reduced is called {\em non-recursive} whose definition we give
below.  The algorithm we described step by step in this section will
determine the finiteness of global dimension of $A$ in terms of the
global dimensions of some subset (which is not unique and will depend
on the choices made) of possibly non-recursive artinian subalgebras.

\begin{definition}\label{DefnRecursive}
  An artinian algebra $A$ is called {\em non-recursive} if it
  satisfies the following conditions {\bf for every} $U\subsetneq E$:
  \begin{enumerate}
  \item $U$ is not a source or a sink in $\C{G}_A/U$ 
  \item $A\neq \sum_{u\in U} AuA$ 
  \item $A$ is not flat over $A(U)$, or $A$ is not flat over $A(U^c)$
    and $\gldim A(U)$ is finite.
  \end{enumerate}
  Otherwise, we will call $A$ as {\em recursive}. 
\end{definition}

\section{Incidence algebras and Quotients of free path
  algebras}\label{PathAlgebras}

\subsection{Definitions and basic properties}~

Assume $\C{Q}$ is a finite directed graph. The path algebra $k\C{Q}$
of the finite directed graph $\C{Q}$ is the vector space spanned by
all the paths in $\C{Q}$. The multiplication on the paths is given by
the concatenation of the sequences of edges of the paths $p$ and $q$
if source of $p$ is the same as the range of $q$ and zero
otherwise. The set of all vertices in $\C{Q}$ form a complete set of
orthogonal idempotents.  Moreover, each vertex in $\C{Q}$, is a
primitive idempotent of the path algebra, and vice versa. Hence, the
directed graph $\C{G}_{k\C{Q}}$ of the path algebra $k\C{Q}$ in the
sense of Definition~\ref{GraphOfAnArtinianAlgebra} has the same vertex
set as of $\C{Q}$. For any non-zero path $\alpha$ from vertex $e$ to
vertex $f$, $\alpha = f \alpha e $ is in $f(k\C{Q})e$ which is a
non-zero $k$-vector space of $k\C{Q}$. So there is an edge connecting
$e \rightarrow f $ in $\C{G}_{k\C{Q}}$ and the directed graph
$\C{G}_{k\C{Q}}$ is an extension of the directed graph $\C{Q}$. If
$\C{Q}$ does not have any oriented cycles, neither does
$\C{G}_{k\C{Q}}$.

\begin{lemma}\label{FlatLemma}
  Let $A:=k\C{Q}$ be the free path algebra over a finite directed
  graph $\C{Q}$. Then $A$ is flat over $A(U)$ for every subset $U$ of
  vertices in $\C{Q}$.
\end{lemma}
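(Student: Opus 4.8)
The plan is to prove the stronger statement that $A$ is \emph{projective} (not merely flat) as a one-sided $A(U)$-module on each side; flatness as a bimodule then follows at once. I will treat the left module $UA := \bigoplus_{u\in U} uA$, the part of $A$ consisting of paths whose target lies in $U$, on which the identity $e_U = \sum_{u\in U} u$ of $A(U)$ acts as the identity. The right module $AU := \bigoplus_{u\in U} Au$ is handled by the symmetric argument applied to the opposite graph $\C{Q}^\op$, so I will only write out the left-module case. Note that nothing here will use acyclicity of $\C{Q}$: the argument is purely combinatorial and survives the infinitely-many-paths situation, since arbitrary direct sums of projectives are projective.

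First I would decompose by source vertex, writing $UA = \bigoplus_{e\in E} UAe$ with $UAe = \bigoplus_{u\in U} uAe$; since left multiplication preserves the source idempotent $e$, each $UAe$ is a left $A(U)$-submodule, and it suffices to show each is projective. The key step is a unique-factorization statement for basis paths. Because $A = k\C{Q}$ is the \emph{free} path algebra, the paths of $\C{Q}$ form a $k$-basis and the concatenation of any two composable paths is again a nonzero basis element. Given a basis path $p$ with source $e$ and target in $U$, read its vertices $e = x_0, x_1, \dots, x_\ell = t(p)$ starting from the source; since $t(p)\in U$ there is a well-defined \emph{first} index $i$ with $w := x_i \in U$, and this splits $p$ uniquely as $p = q\,p_1$, where $q$ is the segment $x_i \to x_\ell$ (both endpoints in $U$, so $q \in A(U)w$) and $p_1$ is the segment $x_0\to x_i$ whose only $U$-vertex is its target $w$. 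Writing $B_{e,w}$ for the set of such reduced paths $e\to w$, this bijection of bases upgrades to an isomorphism of left $A(U)$-modules
\[
  UAe \;\cong\; \bigoplus_{w\in U} A(U)w \otimes_k k\langle B_{e,w}\rangle ,
\]
in which $A(U)$ acts only on the left tensor factor. (When $e\in U$ the first entry vertex is $e$ itself, $p_1$ is the trivial path, and the isomorphism degenerates to the evident identity $UAe = A(U)e$.)

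To conclude, I would observe that each $A(U)w = \bigoplus_{u\in U} uAw$ is a direct summand of $A(U) = \bigoplus_{w\in U} A(U)w$ via the orthogonal idempotents $w\in U$, hence is a projective left $A(U)$-module; tensoring it over $k$ with the free $k$-module $k\langle B_{e,w}\rangle$ produces a direct sum of copies of $A(U)w$, still projective. Summing over $w\in U$ and then over $e\in E$ exhibits $UA$ as a direct sum of projective left $A(U)$-modules, hence projective and in particular flat. Running the identical argument over $\C{Q}^\op$ shows $AU$ is projective as a right $A(U)$-module, so $A$ is flat over $A(U)$ on both sides.

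I expect the only genuine subtlety to be the bookkeeping around the factorization $p = q\,p_1$: one must verify that "first entry into $U$" is well defined on every basis path, that the resulting assignment is a bijection onto the indicated pairs, and that it intertwines the $A(U)$-actions, i.e.\ that left multiplication by an element of $A(U)$ alters only the head $q$ and never the reduced tail $p_1$. This is precisely the point at which freeness of the path algebra is essential: in a quotient by a nonzero ideal, composable paths may become linearly dependent or vanish, and the direct-sum decomposition of $UAe$ displayed above would break down.
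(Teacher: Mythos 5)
Your proof is correct and takes essentially the same route as the paper: both rest on the unique factorization of a basis path with target in $U$ at its first entry into $U$, which exhibits $UA$ as a direct sum of cyclic $A(U)$-modules indexed by the reduced tails, with the right-module side handled symmetrically. Your version is in fact a little more precise where the paper is loose --- the summands are the projective modules $A(U)w$ rather than genuinely free modules, and the paths whose source already lies in $U$ (in particular the trivial paths) must be included, which your degenerate case $e\in U$ covers --- while the only point the paper states that you omit is the one-line remark that the complementary summand $U^{c}A$ of $A$ is annihilated by $A(U)$ and hence does not obstruct flatness.
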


\begin{proof}
  Let $U$ be a subset of vertices in $\C{Q}$.  Then $UA:= \sum_{u\in
    U} uA$, the left ideal of paths ending at a vertex in $U$, has a
  basis over $A(U)$.  This basis is the set of all paths starting at a
  vertex in $U^c$, ending at a vertex in $U$ and consist of no cycles
  within $U$.  It is clear that this is a generating set, let us show
  that it is a basis.  Assume we have distinct paths
  $\beta_1,\ldots,\beta_n$ in this generating set such that there are
  elements $\alpha_1,\ldots, \alpha_n$ in $A(U)$ with the property
  that $\sum \alpha_i \beta_i =0$.  Then for every path $\gamma$
  appearing in $\alpha_1$ there is a path $\beta_j$ and a path
  $\gamma'$ in $A(U)$ appearing in $ \alpha_j$ such that $\gamma
  \beta_1 = \gamma' \beta_j$. Without loss of generality, assume the
  length of $\beta_1$ is less than or equal to the length of
  $\beta_j$. Then, $\beta_1$ is a part of $\beta_j$, which forces
  $\beta_j$ to go through the vertex set $U$ at least twice. This is a
  contradiction.  Then the lengths of $\beta_1$ and $\beta_j$ are the
  same, and therefore, they are the same. This is also a
  contradiction. So, we conclude that the elements
  $\beta_1,\ldots,\beta_n$ are linearly independent which means $UA$
  is free over $A(U)$, and therefore flat. Now, $A$ splits as a direct
  sum $A= UA \oplus U^cA$. The action of $A(U)$ on $U^cA$ is by zero
  making it flat over $A(U)$.  Therefore, $A$ is flat over $A(U)$.
  The proof for $A$ viewed as a right module $A(U)$ is similar.
\end{proof}

\begin{proposition}\label{FreePathAlgebraIsHereditary}\cite[Proposition 1.4]{AuslanderReitenSmalo:RepresentationTheoryofArtinAlgebras}
  Assume $\C{Q}$ is a finite directed graph with no oriented
  cycles. Then the free path algebra $k\C{Q}$ of $\C{Q}$ is a
  hereditary $k$-algebra.
\end{proposition}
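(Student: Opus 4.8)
The plan is to show that $k\C{Q}$ is hereditary by proving that its global dimension is at most $1$, since an algebra is hereditary precisely when $\gldim(k\C{Q})\leq 1$. The strategy is to feed the algebra into the flat version of Corollary~\ref{NoCycle}, for which I need three inputs: that $A := k\C{Q}$ is artinian, that its graph $\C{G}_A$ is acyclic, and that $A$ is flat over every $A(U)$.

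First I would observe that because $\C{Q}$ is finite and has no oriented cycles, it contains only finitely many paths; hence $A = k\C{Q}$ is finite-dimensional over $k$ and, in particular, artinian, so the whole machinery of Section~\ref{GlobalDimension} applies. Next, as noted in the discussion preceding Lemma~\ref{FlatLemma}, the absence of oriented cycles in $\C{Q}$ forces $\C{G}_{k\C{Q}}$ to be acyclic as well, which is exactly the hypothesis of Corollary~\ref{NoCycle}. Finally, Lemma~\ref{FlatLemma} supplies the flatness of $A$ over $A(U)$ for every non-empty $U \subseteq E$, which is the additional hypothesis needed to invoke the sharper, flat conclusion of that corollary, namely
\[ \gldim(A) \leq \max\left(\{1\}\cup\{\gldim(eAe)\mid e\in E\}\right). \]

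The remaining step, and the only one requiring a small computation, is to evaluate the corner algebras $eAe = e(k\C{Q})e$ for each vertex $e \in E$. Since $\C{Q}$ has no oriented cycles, the only path beginning and ending at $e$ is the trivial path $e$ itself; hence $eAe = ke \cong k$, a field, and therefore $\gldim(eAe) = 0$ for every $e$. Substituting this into the displayed bound yields $\gldim(A) \leq \max(\{1\}\cup\{0\}) = 1$, so $k\C{Q}$ is hereditary.

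I expect no serious obstacle here: all the genuine work has already been done in assembling the flat case of Corollary~\ref{NoCycle} and in Lemma~\ref{FlatLemma}. The only point deserving care is the identification $eAe \cong k$, which is where the cycle-free hypothesis is used a second time, beyond guaranteeing acyclicity of $\C{G}_A$; one must note that any nontrivial homogeneous element of $eAe$ would be a nontrivial oriented cycle based at $e$, which the hypothesis excludes.
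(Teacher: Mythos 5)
Your proposal is correct and follows essentially the same route as the paper's own proof: finite-dimensionality gives artinianness, the cycle-free hypothesis gives both acyclicity of $\C{G}_{k\C{Q}}$ and $e(k\C{Q})e\cong k$, and Corollary~\ref{NoCycle} then yields $\gldim(k\C{Q})\leq 1$. If anything, you are more explicit than the paper in noting that Lemma~\ref{FlatLemma} is what licenses the sharper flat form of Corollary~\ref{NoCycle} (the non-flat bound $|E|-1+\sum_e\gldim(eAe)$ alone would not give heredity), which is a worthwhile clarification.
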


\begin{proof}
  Since $k\C{Q}$ is a finite dimensional $k$-algebra, it is
  artinian. Moreover, $\C{Q} \subset \C{G}_{k\C{Q}}$ has no oriented
  cycles and $e(k\C{Q})e = k$ for every vertex in $\C{Q}$ is of global
  dimension 0.  The result follows using Corollary~\ref{NoCycle}.
\end{proof}

\begin{theorem}\label{QuotientPathAlgebra}
  Let $A$ be the free path algebra of a cycle-free directed graph
  $\C{Q}=(\C{Q}_0,\C{Q}_1)$ and $I$ be a nil ideal of $A$. Then the
  artinian algebra $B:=A/I$ has finite global dimension.
\end{theorem}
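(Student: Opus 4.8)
The plan is to reduce the whole statement to Corollary~\ref{NoCycle} applied to $B$ itself, after checking two things: that the directed graph $\C{G}_B$ inherits the cycle-free structure of $\C{Q}$, and that every diagonal block of $B$ is just the ground field $k$. First I would record that $\C{Q}$ being finite and cycle-free forces $A=k\C{Q}$ to be finite dimensional (no path can repeat a vertex, so there are only finitely many paths), whence $B=A/I$ is finite dimensional and in particular artinian, and all the machinery of Section~\ref{GlobalDimension} applies.

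Next I would fix a convenient complete set of primitive idempotents for $B$. Since $I$ is nil it is contained in $J(A)$, so by Lemma~\ref{PrimitiveIdempotentLifting} the image $\bar e:=e+I$ of each vertex idempotent $e\in\C{Q}_0$ is again a primitive idempotent of $B$. An idempotent cannot lie in a nil ideal, so each $\bar e$ is nonzero; the $\bar e$ are pairwise orthogonal and sum to $1_B$, hence $E_B=\{\bar e \mid e\in\C{Q}_0\}$ is a complete set of orthogonal primitive idempotents splitting the identity of $B$, and by Lemma~\ref{UniqueGraph} we may compute $\C{G}_B$ using $E_B$.

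The key step is to identify $\C{G}_B$ as a subgraph of $\C{G}_A$. There is an off-diagonal edge $\bar f\leftarrow\bar e$ in $\C{G}_B$ exactly when $\bar f B\bar e=\overline{fAe}\neq 0$, which in particular forces $fAe\neq 0$, i.e. the existence of a nonzero path from $e$ to $f$ in $\C{Q}$. Thus every edge of $\C{G}_B$ is an edge of $\C{G}_A$; since $\C{Q}$, and therefore $\C{G}_A$, has no oriented cycles, neither does $\C{G}_B$. Moreover, for each vertex $e$ the only path from $e$ to itself in a cycle-free graph is the trivial one, so $eAe=ke$ and consequently $\bar e B\bar e=\overline{eAe}\cong k$, the surjection $ke\to\overline{eAe}$ being injective because $\bar e\neq 0$. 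In particular $\gldim(\bar e B\bar e)=0$ for every $e$.

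Finally I would invoke Corollary~\ref{NoCycle} for the artinian algebra $B$ with its cycle-free graph $\C{G}_B$, obtaining
\[ \gldim(B)\ \leq\ |E_B|-1+\sum_{\bar e\in E_B}\gldim(\bar e B\bar e)\ =\ |\C{Q}_0|-1, \]
which is finite. The only point requiring genuine care is the passage from $A$ to $B$: one must ensure that quotienting by the nil ideal $I$ neither merges nor destroys the vertex idempotents (handled by nilness giving $I\subseteq J(A)$ together with Lemma~\ref{PrimitiveIdempotentLifting}) and that it cannot manufacture new oriented cycles in $\C{G}_B$ (passing to $B$ can only delete edges, never add them). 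Once these observations are in place, the cycle-free case of Corollary~\ref{NoCycle} does all the work, and no estimate on $I$ beyond nilpotency of its elements is needed.
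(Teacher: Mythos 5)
Your proof is correct and follows essentially the same route as the paper: both reduce the statement to Corollary~\ref{NoCycle} by showing that $\C{G}_B$ inherits cycle-freeness from $\C{Q}$ and that every diagonal block $\bar{e}B\bar{e}$ is a copy of $k$. The only difference is the direction of the idempotent bookkeeping --- you push the vertex idempotents of $A$ down to $B$ (using that a nil ideal lies in $J(A)$, so Lemma~\ref{PrimitiveIdempotentLifting} preserves primitivity and the images form a complete orthogonal set), whereas the paper lifts an abstract complete set of primitive idempotents from $B$ up to $A$, completes it, and rules out cycles by lifting paths; your direction is slightly more economical but the substance is identical.
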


\begin{proof}
  Let $\pi:A \rightarrow B$ be the canonical quotient morphism.  Let
  $E_B$ denote a set of complete primitive idempotents of the artinian
  algebra $B$. Since $I$ is nil, for each $u\in E_B$ we can pick a
  primitive idempotent $\hat{u}\in A$ such that $\pi(\hat{u})=u$ by
  Lemma~\ref{IdempotentLifting} and
  Lemma~\ref{PrimitiveIdempotentLifting}.  Let $e = \sum_{u\in E_B}
  \hat{u}$ the sum of all of these primitive idempotents, and let $f =
  1_A - e$.  Then $fAf$ is an artinian algebra with unit $f$, and
  therefore, has its own set of primitive idempotents splitting its
  unit $f$.  Thus we completed the lifted set of primitive idempotents
  $\{\hat{u}\in A|\ u\in E_B\}$ to a full set of primitive idempotents
  $E_A$ splitting $1_A$.  This set of primitive idempotents might not
  be necessarily the set $\C{Q}_0$, but one can find a bijection as in
  Proposition~\ref{SplittingIdempotents} and the resulting graph is
  isomorphic to $\C{G}_A$ by Lemma~\ref{UniqueGraph}.  Now, assume by
  way of contradiction that the directed graph $\C{G}_B$ of $B$ has a
  cycle $\alpha$ which starts and ends in a primitive idempotent
  $u$. Then there exists a lift $\widehat{\alpha}\in A$ of the cycle
  $\alpha$ which is a linear combination of cycles which start and end
  in an idempotent $\hat{u}\in E_A$. This is a contradiction since
  $\C{Q}$, and therefore $\C{G}_{A}$, has no cycles. Now, by
  Corollary~\ref{NoCycle} we get the finite upper bound $|E|-1$
  because $eBe \cong k$ is of global dimension 0 for every $e\in E$.
\end{proof}

\begin{remark}
  Note that if $I$ is a nil ideal then it contains no idempotents other 
  than 0.  In particular, it contains no elements from the set of 
  primitive idempotents.
\end{remark}

For any partially ordered set $X$, define the corresponding incidence
algebra $I(X)$ over $k$ as the set of all functions $f:X \times X
\rightarrow k$ with $f(x,y)=0$ unless $x \leq y$, together with the
operations
\begin{align*}
  (f+g)(x,y) = & f(x,y)+g(x,y)\\
  fg(x,y) = & \sum_{x \leq z \leq y}f(x,z)g(z,y)\\
  (rf)(x,y) = & rf(x,y)  
\end{align*}
for any $r \in k$ and $f,g \in I(X)$ and $x,y \in X$.  However, since
the partially ordered sets we consider are all finite, we can identify
our incidence algebras as algebras generated by symbols $E_{xy}$ for
every $x\leq y$ subject to the relations
\begin{enumerate}
  
\item $E_{xt}E_{yz}=0$ for $x,y,z,t \in X$ when $x \leq t$ and $y \leq
  z$, and finally $t \neq y$.
  
\item $E_{xz}=E_{xy}E_{yz}$ for $x,y,z \in X$ when $x \leq y \leq z$.
  
\item $E_{xy}=E_{xx}E_{xy}=E_{xy}E_{yy} $ for $x,y \in X$ when $x \leq
  y $.
  
\end{enumerate}

A complete set of primitive idempotents of $I(X)$ splitting the
identity is $\{E_{xx} : \, x \in X\}$.  Hence, the set of all vertices
of $\C{G}_{I(X)}$ is exactly $X$ and there is an edge of the form $x
\leftarrow y$ in $\C{G}_{I(X)}$ if and only if $x\leq y$ in $X$.

\begin{corollary}\label{IncidenceAlgebras}\cite[Section 9]{Mitchell:TheoryOfCategories}
  The incidence algebra of any finite poset has finite global
  dimension.
\end{corollary}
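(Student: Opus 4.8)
The plan is to realize the incidence algebra $I(X)$ of a finite poset as a quotient of a free path algebra of a cycle-free graph by a nil ideal, so that Theorem~\ref{QuotientPathAlgebra} applies directly. First I would build the directed graph $\C{Q}$ whose vertex set is $X$ and whose edges encode the covering relations of the poset (or, more generously, all relations $x \leq y$); since $X$ is a poset, the order relation is antisymmetric and transitive, which forbids any oriented cycle, so $\C{Q}$ is cycle-free. This is exactly the setting required by Theorem~\ref{QuotientPathAlgebra}, and matches the description just given in the excerpt that $\C{G}_{I(X)}$ has vertex set $X$ with an edge $x \leftarrow y$ precisely when $x \leq y$.

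Next I would exhibit a surjective algebra morphism $k\C{Q} \to I(X)$ sending each vertex to the idempotent $E_{xx}$ and each generating edge to the corresponding generator $E_{xy}$, with composition of edges mapping to products $E_{xy}E_{yz} = E_{xz}$. The point is that the relations (1)--(3) listed for $I(X)$ are all either consequences of the path-algebra composition law or else impose that certain products of paths vanish or coincide; so $I(X)$ is a quotient $k\C{Q}/I$ where $I$ is generated by these relations. The crucial claim to verify is that $I$ is a \emph{nil} ideal. Since $\C{Q}$ is cycle-free and finite, any path has bounded length, and any element of the kernel $I$ is supported on paths of positive length that lie strictly inside the Jacobson radical; raising such elements to a high enough power forces them to involve paths longer than the longest chain in the finite poset, which must vanish. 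Hence $I \subseteq J(k\C{Q})$ and, by finiteness and cycle-freeness, $I$ is nilpotent, in particular nil.

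Having checked that $B := I(X) \cong A/I$ with $A = k\C{Q}$ cycle-free and $I$ nil, I would then simply invoke Theorem~\ref{QuotientPathAlgebra} to conclude that $\gldim(I(X))$ is finite. The main obstacle I anticipate is the careful bookkeeping in identifying the kernel $I$ and confirming its nilpotence: one must be sure that the presentation by generators $E_{xy}$ and relations (1)--(3) agrees with the path-algebra quotient on the nose, and that no cycle sneaks into $\C{Q}$ (which is guaranteed by antisymmetry of $\leq$, but deserves an explicit remark). An alternative, possibly cleaner, route avoids presenting $I(X)$ as a path-algebra quotient altogether: since $E_{xx}(I(X))E_{xx} = k$ has global dimension $0$ for every $x \in X$, and $\C{G}_{I(X)}$ has no oriented cycles by antisymmetry of the order, one can apply Corollary~\ref{NoCycle} directly to obtain the finite upper bound $|X| - 1$. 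I would likely present this second argument as the proof, since it is immediate and sidesteps the nilpotence verification entirely.
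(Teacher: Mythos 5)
Your first route is exactly the paper's proof: form the directed graph of the poset, present $I(X)$ as a quotient of the cycle-free free path algebra $k G_X$ by the ideal generated by differences of parallel paths, and invoke Theorem~\ref{QuotientPathAlgebra}. In fact you are more careful than the paper on the one point it glosses over, namely that this kernel is nil: your observation that every generator $p-q$ involves only paths of positive length, so that products of more than $|X|-1$ such elements vanish by cycle-freeness, is the verification the paper omits. Your proposed alternative --- applying Corollary~\ref{NoCycle} directly to $I(X)$, using that $E_{xx}I(X)E_{xx}\cong k$ and that antisymmetry of $\leq$ forbids oriented cycles in $\C{G}_{I(X)}$ --- is also correct and is arguably cleaner, since it sidesteps the presentation and the nilpotence check entirely; but it is not substantially different in content, because the paper's Theorem~\ref{QuotientPathAlgebra} is itself proved by reducing to Corollary~\ref{NoCycle}, and both routes yield the same bound $\gldim(I(X))\leq |X|-1$. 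Either version is acceptable; the only detail worth making explicit in the direct version is that $\{E_{xx}\}$ is a complete set of \emph{primitive} orthogonal idempotents (immediate from Proposition~\ref{LocalDecomposition} since $E_{xx}I(X)E_{xx}\cong k$ is local), so that Corollary~\ref{NoCycle} applies.
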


\begin{proof} 
  Any finite poset $X$ determines a directed graph $G_X=(X,E)$ where
  edges are defined by elements which are in relation.  Then the
  incidence algebra $I(X)$ over $X$ is isomorphic to a quotient of the
  free path algebra $kG_X$ of this graph as follows.  One can define
  an algebra epimorphism by mapping each path $\alpha$ in $kX$ to the
  edge $E_{xy}$ which is an element in $I(X)$ where $x$ is the
  terminal point (target) of the path $\alpha$ and $y$ is the initial
  point (source) of $\alpha$. The kernel of this epimorphism is the
  subalgebra generated by the difference of paths with the same source
  and target. Then the result follows from
  Theorem~\ref{QuotientPathAlgebra}.
\end{proof}

\subsection{Calculations}\label{Examples}

\begin{example}
  Consider the directed graph $\C{Q}$
  \[\xymatrix{
    e \bullet \ar@/{}^{.5pc}/[r]^x & \ar@/{}^{0.5pc}/[l]^y \bullet f
  }\]
  and the free path algebra $A:=k\C{Q}$ generated by this graph over
  our base field $k$.  Let $U=\{e\}$ and $U^c=\{f\}$.  We see that
  $A(U) = \text{Span}_k(e,yx,(yx)^2,(yx)^3,\ldots)$ and $A(U^c) =
  \text{Span}_k(f,xy,(xy)^2,(xy)^3,\ldots)$ are isomorphic to
  polynomial algebras over one indeterminate over $k$ which are
  commutative and of global dimension 1.  Moreover, $A$ splits as an
  $A(U)$ module
  \[ A = A(U) \oplus x A(U) \oplus A(U) y \oplus x A(U) y \oplus
  \text{Span}_k(f)
  \] and as an $A(U^c)$-module
  \[ A(U^c) \oplus y A(U^c) \oplus A(U^c) x \oplus y A(U^c) x \oplus
  \text{Span}_k(e)
  \] This means $A$ is flat over $A(U)$ and $A(U^c)$.  Moreover, we
  have $H_0(R^U_*) \cong H_0(R^{U^c}_*)\cong k$ both are
  non-zero. Then by Proposition~\ref{Smear} we see that
  $\gldim(k\C{Q})$ is infinite.  
\end{example}

\begin{example}\label{NilpotentCycleExample}
  Consider the same directed graph $\C{Q}$ and we define $A :=
  k\C{Q}/\left<xy,yx\right>$.  Then $A$ is a finite dimensional
  algebra $A = \text{Span}_k(e,f,x,y)$.  Let $U=\{e\}$ and
  $U^c=\{f\}$.  We see that $A(U) \cong k \cong A(U^c)$ which are
  semi-simple, and therefore, of global dimension 0. This means $A$ is
  flat over $A(U)$ and $A(U^c)$.  Moreover, we have $H_0(R^U_*) \cong
  H_0(R^{U^c}_*)\cong k$ both are non-zero. Then by
  Proposition~\ref{Smear} we see that $\gldim(A)$ is infinite.
\end{example}

\begin{example}
  Consider the same directed graph $\C{Q}$ and let
    $A:=k\C{Q}/\left<xy\right>$.  We get a finite dimensional algebra
    \[ A = \text{Span}_k(e,f,x,y,yx) \] Again, let $U=\{e\}$ and
    $U^c=\{f\}$.  We immediately see that $A(U^c) = \text{Span}_k(f)
    \cong k$ is of global dimension 0.  On the other hand, $A(U)
    =\text{Span}_k(e,yx)$ of infinite global dimension because it is
    isomorphic to $k[t]/\left<t^2\right>$, and $A$ splits as a
    $A(U)$-bimodule as
    \[ A = A(U) \oplus \text{Span}_k(f,x,y) \] Then $A$ is flat over
    $A(U)$, and therefore, by Lemma~\ref{FlatLowerBound} we conclude
    that $A = k\C{Q}/\left<xy\right>$ is of infinite global
    dimension.
\end{example}

\begin{example}
  Notice that Example 3.6 in \cite{AnickGreen:QuotientOfPathAlgebras}
  considers the path algebra $A$ over $\C{Q}$
  \[ \xymatrix{&{\bullet}^{f_0}\ar@{->}^{x_1}[dr]&
    &{\bullet}^{f_1}\ar@{<-}_{x_2}[dl]\ar@{->}^{x_3}[dr]& \\
    &&{\bullet}^{e_1} &&{\bullet}^{e_2}&  \\
    &{\bullet}_{g_0}\ar@{->}_{y_1}[ur]&
    &{\bullet}_{g_1}\ar@{<-}^{y_2}[ul]\ar@{->}_{y_3}[ur] & } \]
  subject to the relations $x_2x_1=0=y_2y_1$, $x_3x_2=y_3y_2$ and
  computes the $\gldim(A) =2$. Here we use category theoretic
  composition convention when we multiply elements. With the procedure
  defined above we get $\C{G}_{A}$ as:
   \[ \xymatrix{&{\bullet}^{g_0}\ar@{->}^{y_1}[dr]\ar[rr] ^{z_1} &  
     &{\bullet}^{f_1}\ar@{<-}_{x_2}[dl]\ar@{->}^{x_3}[dr]& \\
             &&{\bullet}^{e_1}\ar[rr]^{z_2} & & {\bullet}^{e_2} & \\
             &{\bullet}_{f_0}\ar@{->}_{x_1}[ur]\ar[rr]_{z_3} &
             &{\bullet}_{g_1}\ar@{<-}^{y_2}[ul]\ar@{->}_{y_3}[ur]
             &
   } \]
   Let $U=\{f_0,g_0 \}$, then $A(U)= f_0Af_0 \oplus g_0Ag_0$ which is
   isomorphic to $k \oplus k$, So $\gldim(A(U))=0$. Let $B=A(U^c)$.
   Now, pick new $U=\{e_1,f_1,g_1 \}$, then $B(U)$ is a free path
   algebra that is not semi-simple and $B(U^c)=e_2Be_2 \cong k$.
   Hence, $\gldim(A) \leq 3$ is an upper bound which does not
   contradict the actual dimension.  
 \end{example}
 
\begin{example}
Consider the directed graph $\C{Q}$, 
\[ \xymatrix{
&{\bullet}^{f_0}\ar@{<-}_{x_1}[dl]\ar@{->}^{x_2}[dr]& 
&{\bullet}^{f_1}\ar@{<-}_{x_3}[dl]\ar@{->}^{x_4}[dr]& & &&
{\bullet}^{f_n}\ar@{<-}[dl]\ar@{->}^{x_{2n}}[dr]\\
             {\bullet}^{e_0}&&{\bullet}^{e_1} &&{\bullet}^{e_2}& \cdots 
             & {\bullet} & & {\bullet}^{e_n}\\
             &{\bullet}_{g_0}\ar@{<-}^{y_1}[ul]\ar@{->}_{y_2}[ur]&
             &{\bullet}_{g_1}\ar@{<-}^{y_3}[ul]\ar@{->}_{y_4}[ur]
             & & & & 
{\bullet}^{g_n}\ar@{<-}[ul]\ar@{->}_{y_{2n}}[ur]
} \]
and let $A$ be the path algebra over $\C{Q}$ subject to the relations 
\[ x_{2k}x_{2k-1}=y_{2k}y_{2k-1}\qquad
x_{2k+1}x_{2k}=0=y_{2k+1}y_{2k} \] for $k=1,2,...,n$ with the same
composition convention as above.

Then $\C{G}_{A}$ will be:
\[ \xymatrix{&{\bullet}^{g_0}\ar@{<-}_{y_1}[dl]\ar@{->}^{y_2}[dr]
\ar[rr] & & {\bullet}^{f_1}\ar@{<-}_{x_3}[dl]\ar@{->}^{x_4}[dr]\ar[rr]
& & {\bullet}^{g_2}&&
{\bullet}^{f_n}\ar@{<-}[dl]\ar@{->}^{x_{2n}}[dr]\\
             {\bullet}^{e_0}\ar[rr]^{z_1}& &
             {\bullet}^{e_1}\ar[rr] ^{z_2}& & {\bullet}^{e_2}& \cdots 
             & {\bullet} \ar[rr]^{z_n} & & {\bullet}^{e_n}\\
             &{\bullet}_{f_0}\ar@{<-}^{x_1}[ul]\ar@{->}_{x_2}[ur]\ar[rr] & & 
             {\bullet}_{g_1}\ar@{<-}^{y_3}[ul]\ar@{->}_{y_4}[ur] 
             \ar[rr]& & {\bullet}^{f_2}&& 
{\bullet}^{g_n}\ar@{<-}[ul]\ar@{->}_{y_{2n}}[ur]
} \]
Let $U=\{e_0,f_0,g_0 \}$, then 
$A(U)=e_0Ae_0 \oplus e_0Af_0 \oplus e_0Ag_0 \oplus f_0Af_0 \oplus g_0Ag_0$.
Since $A(U)$ is a free path algebra, it is hereditary. Moreover, $A(U)$ is 
algebra isomorphic to 
$\left [ \begin{array}{ccc} k & k& k \\ 0&k&0 \\ 0&0&k\end{array} \right ]$ 
which is not semi-simple, as the submodule 
$\left [ \begin{array}{ccc} k & k& k \\ 0&0&0 \\ 0&0&0\end{array} \right ] $ 
is not a direct summand. So $\gldim(A(U))=1$.
By Theorem~\ref{SourceSink},  
\[ \gldim(A) \leq 1 + \gldim(A(U)) + \gldim(A(U^c)) = 2 +
\gldim(A(U^c)) \] Notice that if we call $A=A_n$ and $\C{Q}=\C{Q}_n$,
then $A(U^c)$ is the path algebra on $\C{Q}_{n-1}$ subject to the
relations $x_{2k}x_{2k-1}=y_{2k}y_{2k-1}$,
$x_{2k+1}x_{2k}=0=y_{2k+1}y_{2k}$ for $k=2,3,...,n$. Use the same
procedure for the algebra $A_{n-1}$. Pick $U=\{e_1,f_1,g_1 \}$. We
get,
\[ \gldim(A_{n-1}) \leq 2 + \gldim(A_{n-2}). \] If we continue in a
similar manner, $A_0$ becomes $A_0=e_0Ae_0$ which is isomorphic to
$k$, so $\gldim(A_0) = 0$.  Hence, $ \gldim(A) \leq 2n $.
\end{example}
  
%\bibliographystyle{plain}
%\bibliography{bibliography}

\begin{thebibliography}{10}

\bibitem{AnickGreen:QuotientOfPathAlgebras}
D.J. Anick and E.L. Green.
\newblock On the homology of quotients of path algebras.
\newblock {\em Comm. Algebra}, 15(1-2):309--341, 1987.

\bibitem{AuslanderReitenSmalo:RepresentationTheoryofArtinAlgebras}
M.~Auslander, I.~Reiten, and Smal{\o}~S. O.
\newblock {\em Representation theory of {A}rtin algebras}, volume~36 of {\em
  Cambridge Studies in Advanced Mathematics}.
\newblock Cambridge University Press, Cambridge, 1997.
\newblock Corrected reprint of the 1995 original.

\bibitem{FossumGriffithReiten:TrivialExtensions}
R.~M. Fossum, P.~A. Griffith, and I.~Reiten.
\newblock {\em Trivial extensions of abelian categories}.
\newblock Lecture Notes in Mathematics, Vol. 456. Springer-Verlag, Berlin,
  1975.
\newblock Homological algebra of trivial extensions of abelian categories with
  applications to ring theory.

\bibitem{Freyd:AbelianCategories}
P.~J. Freyd.
\newblock Abelian categories.
\newblock {\em Repr. Theory Appl. Categ.}, (3):1--190, 2003.

\bibitem{Lam:Lectures}
T.~Y. Lam.
\newblock {\em Lectures on modules and rings}, volume 189 of {\em Graduate
  Texts in Mathematics}.
\newblock Springer-Verlag, New York, 1999.

\bibitem{Lam:AFirstCourse}
T.Y. Lam.
\newblock {\em A first course in noncommutative rings}, volume 131 of {\em
  Graduate Texts in Mathematics}.
\newblock Springer-Verlag, New York, 1991.

\bibitem{Li:ArtinAlgebrasViaPathAlgebras}
F.~Li.
\newblock Characterization of left {A}rtinian algebras through pseudo path
  algebras.
\newblock {\em J. Aust. Math. Soc.}, 83(3):385--416, 2007.

\bibitem{McConnell:GlobalDimensionSomeRings}
John~C. McConnell.
\newblock On the global dimension of some rings.
\newblock {\em Math. Z.}, 153(3):253--254, 1977.

\bibitem{Mitchell:TheoryOfCategories}
B.~Mitchell.
\newblock {\em Theory of Categories}, volume~17 of {\em Pure and Applied Math.
Series}.
\newblock Academic Press, 1965.

\bibitem{Palmer:SemitrivialExtensions}
I.~Palm{\'e}r.
\newblock The global homological dimension of semi-trivial extensions of rings.
\newblock {\em Math. Scand.}, 37(2):223--256, 1975.

\bibitem{PalmerRoos:GlobalDimension}
I.~Palm{\'e}r and J.-E. Roos.
\newblock Explicit formulae for the global homological dimensions of trivial
  extensions of rings.
\newblock {\em J. Algebra}, 27:380--413, 1973.
..
\end{thebibliography}

\end{document}